\documentclass[a4paper]{amsart}
\oddsidemargin 0mm
\evensidemargin 0mm
\topmargin 10mm
\textwidth 160mm
\textheight 230mm
\tolerance=9999

\setlength{\textwidth}{\paperwidth}
\addtolength{\textwidth}{-2.25in}
\calclayout
\numberwithin{equation}{section}

\usepackage[latin1]{inputenc}

\usepackage{standalone}
\usepackage{amssymb}
\usepackage{array}
\usepackage{makecell}
\usepackage{amsmath}
\usepackage{mathrsfs}
\usepackage{amsthm}
\usepackage{amsfonts}
\usepackage{textcomp}
\usepackage{graphicx}
\usepackage{cellspace}
\usepackage{relsize}
\usepackage[pdftex]{color}
\usepackage{paralist}
\usepackage[shortlabels]{enumitem}
\usepackage{hyperref}
\usepackage{comment}
\usepackage[arrow, matrix, curve]{xy}

\usepackage{cleveref}
\usepackage{tikz}
\usetikzlibrary{decorations.pathreplacing,calligraphy, calc, arrows.meta, shapes.misc, decorations.pathmorphing}
\usepackage{tikz-cd}

\definecolor{jazzberryjam}{rgb}{0.65, 0.04, 0.37}
\definecolor{mediumred-violet}{rgb}{0.73, 0.2, 0.52}

\hypersetup{
    colorlinks = true,
    citecolor=cyan,
    linkcolor=cyan
}

\newtheorem*{corollary*}{Corollary}
\newtheorem{theorem}{Theorem}[section]
\newtheorem*{theorem*}{Theorem}

\newtheorem{corollary}[theorem]{Corollary}
\newtheorem{lemma}[theorem]{Lemma}

\newtheorem*{claim*}{Claim}

\theoremstyle{definition}
\newtheorem{definition}[theorem]{Definition}

\newtheorem*{theorem }{Theorem}
\newtheorem{remark}[theorem]{Remark}
\newtheorem{example}[theorem]{Example}

\theoremstyle{remark}

\numberwithin{equation}{theorem}
\makeatletter

\makeatother

\renewcommand{\mod}{\operatorname{mod}}

\newcommand{\Ext}{\operatorname{Ext}}
\newcommand{\im}{\operatorname{Im}}

\newcommand{\grade}{\operatorname{grade}}
\newcommand{\cograde}{\operatorname{cograde}}

\renewcommand{\top}{\operatorname{\mathrm{top}}}
\newcommand{\rad}{\operatorname{\mathrm{rad}}}
\newcommand{\soc}{\operatorname{\mathrm{soc}}}
\newcommand{\coker}{\mathrm{Coker}}
\renewcommand{\ker}{\mathrm{Ker}}
\newcommand{\idim}{\operatorname{idim}}
\newcommand{\pdim}{\operatorname{pdim}}
\newcommand{\gldim}{\operatorname{gldim}}

\tikzset{cross/.style={cross out, draw=black, minimum size=2*(#1-\pgflinewidth), inner sep=0pt, outer sep=0pt},
cross/.default={6pt}}

\tikzset{every picture/.style={line width=0.7pt, font=\Large}}

\title{The Auslander-Gorenstein condition for monomial algebras}
\date{\today}
\author[Vikt\'oria Kl\'asz]{Vikt\'oria Kl\'asz}%
\address[Vikt\'oria~Kl\'asz]{Mathematical Institute of the University of Bonn, Endenicher Allee 60, 53115 Bonn, Germany}%
\email{klasz@math.uni-bonn.de}%

\subjclass[2010]{16E65, 16G10}
\keywords{Auslander-Gorenstein algebras, Auslander-Reiten bijection, monomial algebras, gentle algebras, Nakayama algebras}

\begin{document}

\begin{abstract}

This paper investigates the Auslander-Gorenstein property for monomial algebras. First, we prove that every Auslander-Gorenstein monomial algebra is a string algebra and present a simple combinatorial classification of Auslander-Gorenstein gentle algebras.
Furthermore, we describe a procedure to transform any 2-Gorenstein monomial algebra into a Nakayama algebra, thereby reducing the classification of Auslander-Gorenstein monomial algebras to that of Auslander-Gorenstein Nakayama algebras. As an application of this reduction method, we prove that every monomial algebra satisfies a stronger version of the Auslander-Reiten Conjecture.

Our second main result establishes that a monomial algebra is Auslander-Gorenstein if and only if it has a well-defined, bijective Auslander-Reiten map, confirming a conjecture of Marczinzik for monomial algebras. This yields a new homological characterisation of the Auslander-Gorenstein property. Additionally, we provide an explicit description of the Auslander-Reiten bijection in the case of gentle algebras. Along the way, we also generalise a result of Iwanaga and Fuller: We show that every $2n$-Gorenstein monomial algebra is also $(2n+1)$-Gorenstein for every $n\ge 1.$

\end{abstract}


\maketitle
\vspace{-0.2cm}
{
  \hypersetup{linkcolor=black}
  \tableofcontents
}
\vspace{-2cm}
\newpage

\section{Introduction}
\label{chap::introduction}

Auslander-Gorenstein algebras were introduced by Auslander as non-commutative analogues of Gorenstein rings from commutative algebra \cite[Section 3]{FGR75}. 
Let $A$ be a two-sided Noetherian ring, and consider a minimal injective resolution of the regular representation $A_A$: 
$$0\longrightarrow A_A \longrightarrow I^0 \longrightarrow I^1 \longrightarrow \ldots. $$
We say that $A$ is \textcolor{blue}{$n$-Gorenstein} for some $n\ge 1$ if $\text{flat}\dim I^i\le i$ for every $0\le i< n$. If $A$ is $n$-Gorenstein for all $n\ge1$ and $\idim A_A<\infty$, then $A$ is called \textcolor{blue}{Auslander-Gorenstein}.
Prominent examples of Auslander-Gorenstein algebras include Weyl algebras, universal enveloping algebras of finite-dimensional Lie algebras \cite{VO00}, higher Auslander algebras \cite{I07}, and blocks of the BGG category $\mathcal{O}$ \cite{KMM21}.

In \cite{AR94}, Auslander and Reiten initiated the study of these properties in the context of Artin algebras, where the Auslander-Gorenstein condition ensures remarkable homological properties for an algebra.
For instance, syzygy categories of Auslander-Gorenstein algebras are functorially finite, extension-closed, and explicit descriptions of minimal approximations exist in these categories.

The Auslander-Gorenstein condition is also closely related to classical homological conjectures. If $A$ is $n$-Gorenstein for all $n$ then $\idim {}_AA<\infty$ if and only if $\idim A_A<\infty$ \cite[Corollary 5.5]{AR94}. This means that these algebras satisfy the Gorenstein Symmetry Conjecture. It remains an open question, posed in the same paper, whether the condition of being $n$-Gorenstein for all $n$ already implies that the algebra is Auslander-Gorenstein. This is known today as the Auslander-Reiten Conjecture. A positive answer to this would imply the Nakayama Conjecture and several other homological conjectures.

The main purpose of this article is to gain a better understanding of the $n$-Gorenstein and Auslander-Gorenstein properties for certain well-known classes of finite-dimensional algebras. For the remainder of this paper, we assume that $A=kQ/I$ is a finite-dimensional algebra given by a finite quiver $Q$ and an admissible ideal $I$. (For an algebraically closed field $k,$ every finite-dimensional $k$-algebra is Morita equivalent to an algebra of the form $kQ/I.$) In this context, the flat dimension of a finite-dimensional module agrees with its projective dimension.

Deciding if an algebra $kQ/I$ is Auslander-Gorenstein seems to be a rather difficult task. This becomes more accessible in case the ideal $I$ is defined in combinatorial terms. For example, in \cite{IM22}, Iyama and Marczinzik showed that the incidence algebra $kL$ of a finite lattice $L$ is Auslander-Gorenstein if and only if $L$ is a distributive, thereby translating the homological property in question into a lattice-theoretic one. 
In this paper, we restrict our attention to three well-studied classes: gentle algebras (\Cref{chap::gentle}), Nakayama algebras (\Cref{chap::Nakayama}), and general monomial algebras (\Cref{chap::monomial} \& \ref{chap::MainResultsForMonomial}).
For these, we aim to provide explicit combinatorial criteria in terms of quivers and relations that determine whether the algebra is $n$-Gorenstein or Auslander-Gorenstein. 

The first two of these classes have a strong combinatorial structure associated with them. Since gentle algebras are string algebras, all their indecomposable modules can be expressed via strings and bands \cite{BR87}. Linear Nakayama algebras, on the other hand, correspond to Dyck paths, providing a powerful combinatorial framework for their representation theory, see e.g. \cite{MRS21}, \cite{RS17}, or \cite{KMMRS25}. The class of monomial algebras (also known as zero-relation algebras) generalises both gentle and Nakayama algebras. They are defined by the condition that the admissible ideal $I\subset kQ$ is generated by a set of paths, rather than linear combinations of them. Thus, monomial algebras form a much broader class than gentle or Nakayama algebras, but the restriction on $I$ still makes them relatively accessible to study.

\subsection{Main results}
In \cite{AR94}, Auslander and Reiten observed that every Auslander-Gorenstein algebra comes with a distinguished bijection between the isomorphism classes of its indecomposable injective and indecomposable projective modules. 
We say that a finite-dimensional $k$-algebra $A$ has a \textcolor{blue}{well-defined} Auslander-Reiten map if every indecomposable injective $A$-module $I$ has a finite minimal projective resolution where the last non-zero term is indecomposable. In this case, the \textcolor{blue}{Auslander-Reiten map, $\psi_A$} of $A$ is defined as 

$$\psi_A: \{I\text{ indec. injective }\}/_{\cong} \longrightarrow \{P\text{ indec. projective }\}/_{\cong}$$ 
$$I\mapsto \Omega^{\pdim I}(I).$$ 

\begin{theorem*}\cite[Proposition 5.4]{AR94} Let $A$ be an Auslander-Gorenstein algebra. Then $\psi_A$
is well-defined and bijective. Moreover, $\pdim I=\idim \psi_A(I)$. 
\\
In this case, we call $\psi_A$ the \textcolor{blue}{Auslander-Reiten bijection.} 
\end{theorem*}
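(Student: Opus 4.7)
My strategy is to prove well-definedness and bijectivity in parallel, exploiting the left-right symmetry of the Auslander-Gorenstein condition. Finiteness of $\pdim I$ is quick: by Gorenstein symmetry, $\idim {}_AA = \idim A_A = d < \infty$, and the standard duality gives $\pdim (DA)_A = \idim {}_AA = d$. Every indecomposable injective right $A$-module is a summand of the injective cogenerator $(DA)_A$, so has projective dimension at most $d$, and $\Omega^{\pdim I}(I)$ is a well-defined finitely generated projective module.

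The heart of the argument is showing that $\Omega^p(I)$ is indecomposable, where $p=\pdim I$. My plan is to first establish $\grade I = p$ via the \emph{Auslander grade property} $\grade \Ext^j_A(M,A) \ge j$ that characterises Auslander-Gorenstein algebras. I then dualise the minimal projective resolution of $I$: applying $\Hom_A(-,A)$ produces a cocomplex of projective left $A$-modules whose only nonzero cohomology, by $\grade I = p$, is $\Ext^p_A(I,A)$ in top degree. Reading this cocomplex as the minimal projective resolution of $\Ext^p_A(I,A)$ of length $p$, the indecomposability of the projective cover $P_0$ of $I$ transfers to the top of the minimal resolution of $\Ext^p_A(I,A)$. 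Iterating the dualisation and invoking the double-Ext duality $\Ext^p_A(\Ext^p_A(I,A),A)\cong I$ then recovers the minimal resolution of $I$, with $\Omega^p(I)$ now seen to be indecomposable.

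For bijectivity, the explicit inverse is built by the dual procedure on indecomposable projectives: for $P$ indec projective, take its minimal injective coresolution $0\to P\to J^0\to\cdots\to J^q\to 0$ and set $\psi_A^{-1}(P)=J^q$. The same arguments as above, applied on the other side via Gorenstein symmetry, show that $J^q$ is an indecomposable injective of the right length and that $\psi_A$ and $\psi_A^{-1}$ are mutually inverse. The identity $\pdim I = \idim \psi_A(I)$ falls out of this symmetric construction: both sides measure the length of the corresponding minimal resolution, which is matched across the two sides by the inversion.

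\emph{Main obstacle.} The crux lies in the indecomposability step: verifying $\grade I = \pdim I$, identifying the dualised complex with the minimal projective resolution of $\Ext^p_A(I,A)$, and deducing indecomposability of $\Omega^p(I)$ via double-Ext duality. This is where the full Auslander-Gorenstein hypothesis (rather than merely finite Iwanaga-Gorenstein dimension) is essential, and where the minimality of the dualised resolution must be handled carefully.
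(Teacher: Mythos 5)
The paper cites this theorem from [AR94, Proposition 5.4] without reproducing a proof, so there is no in-paper argument to compare against; the framework you choose — grade, dualisation of the minimal projective resolution, double-Ext duality — is the right circle of ideas and close to what Auslander and Reiten do, and your reduction of the finiteness of $\pdim I$ to Gorenstein symmetry is correct. But two steps in the indecomposability argument contain genuine gaps. First, you derive $\grade I = p$ from the Auslander condition $\grade\Ext^j_A(M,A)\ge j$. That condition bounds the grade of the \emph{output} modules $\Ext^j_A(M,A)$; it does not assert the vanishing $\Ext^j_A(I,A)=0$ for $j<p$, which is what $\grade I\ge p$ requires. This vanishing is a nontrivial lemma specific to injective modules over $n$-Gorenstein rings and needs its own argument — the cited inequality alone does not deliver it.

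Second, and more seriously, you appeal to "the indecomposability of the projective cover $P_0$ of $I$." The projective cover of an indecomposable injective need not be indecomposable, since $\top I$ can have several simple summands: already in \Cref{ex::section3.1} of this paper the indecomposable injective $I(2)$ over $A=kQ/\langle ab,c^2\rangle$ has projective cover $P(1)\oplus P(1)$. What the argument actually needs is indecomposability at the \emph{other} end of the dualised resolution, $P_p^*$ — the projective cover of $\Ext^p_A(I,A)$ — which is equivalent to $\Ext^p_A(I,A)$ having simple top. The double-Ext isomorphism $\Ext^p_A(\Ext^p_A(I,A),A)\cong I$ does not supply this by itself: dualising twice merely reproduces the resolution of $I$ and carries no new information about the direct-sum structure of $\Omega^p(I)=P_p$. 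An independent argument is missing — for instance, a proof that $\Ext^p_A(I,A)$ is actually simple (this is essentially the content of the grade bijection the paper alludes to in its introduction), or a direct verification that $\Sigma^p\Omega^p(I)\cong I$ combined with a mechanism excluding summands of $\Omega^p(I)$ of strictly smaller injective dimension. Since your proof of bijectivity and of $\pdim I=\idim\psi_A(I)$ is built on this step by symmetry, those parts remain unestablished as well.
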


This bijection can be naturally interpreted as a permutation of the simple $A$-modules, or equivalently, a permutation on the vertices of the underlying quiver.

Note that the well-definedness of $\psi_A$  is already a remarkable property of Auslander-Gorenstein algebras. Demanding that the last term of a projective resolution is indecomposable is a strong requirement, since usually, syzygy modules tend to get more complicated as we consider higher terms in the resolution. On the other hand, even when the well-definedness of $\psi_A$ holds, bijectivity remains a non-trivial condition. For instance, while the Auslander-Reiten map is well-defined for every linear Nakayama algebra, it is seldom bijective. This motivates the following conjecture.
\\
\\
\hypertarget{conjectureM}{\textbf{Conjecture.}} (Marczinzik, \cite{M23}): Let $A$ be a finite-dimensional algebra over an algebraically closed field. Then $A$ is Auslander-Gorenstein if and only if it has a well-defined Auslander-Reiten map that is a bijection.
\\

The Auslander-Reiten bijection appears to play a central role in the theory of Auslander-Gorenstein algebras. In a recent paper \cite{KMT25}, it was shown that this bijection coincides with Iyama's grade bijection, a homological permutation on the simple modules introduced in \cite{I03}. As a consequence, it has the property that $\grade(\soc I)=\cograde(\top \psi_A( I))$. In \cite{IM22}, it was discovered that for incidence algebras of distributive lattices, the Auslander-Reiten permutation coincides with the rowmotion map on the lattice, a central object in dynamical algebraic combinatorics, see e.g. \cite{S17},\cite{TW19}, and \cite{H24}. In the context of higher Auslander algebras, the Auslander-Reiten map corresponds to the higher Auslander-Reiten translate, as shown in \cite{MTY24}.
A new, linear algebraic reinterpretation of this bijection has also been proposed in \cite{KMT25}, where it is described in terms of Bruhat decompositions of Coxeter matrices. This approach opens the door to extending the definition of the Auslander-Reiten bijection to non-Auslander-Gorenstein algebras with finite global dimension. 

As a main result of this article, we prove Marczinzik's conjecture for the class of monomial algebras.

\begin{theorem*}[\Cref{thm::AGiffARbijwelldef_Monomial}]
     For a monomial algebra $A$ the following statements are equivalent:
    \begin{enumerate}[\rm(i)]
        \item $A$ is Auslander-Gorenstein.
        \item $A$ has a well-defined Auslander-Reiten map that is a bijection. 
    \end{enumerate}
\end{theorem*}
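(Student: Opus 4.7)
The implication (i) $\Rightarrow$ (ii) is exactly the Auslander-Reiten theorem recalled just above, so the real task is the converse. My plan is to use the structural machinery already established earlier in the paper to reduce the problem, in stages, from a general monomial algebra to a Nakayama algebra, where the $n$-Gorenstein conditions and the Auslander-Reiten map admit a purely combinatorial description in terms of Kupisch series or Dyck paths.

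First, I would show that a monomial algebra $A=kQ/I$ with well-defined and bijective $\psi_A$ is already forced to be a string algebra. Over a monomial algebra, the syzygy of an injective module decomposes into indecomposable summands indexed by the incoming arrows at the corresponding vertex together with the maximal paths bounded by relations; the requirement that $\Omega^{\pdim I}(I)$ be indecomposable for every indecomposable injective $I$ imposes local constraints on $I$ around each vertex. These should match the combinatorial conditions used in Section 3 to characterise which monomial algebras are string. Well-definedness (the finiteness of the projective resolution of each $I$) will simultaneously give $\idim A_A<\infty$, and bijectivity will ensure the dual finiteness for $_AA$.

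Second, having reduced to the string setting, I would apply the Nakayama reduction of Section 5 that transforms a $2$-Gorenstein monomial algebra into a Nakayama algebra. The step to verify is that both the well-definedness and the bijectivity of $\psi$ are preserved (or appropriately reflected) under this transformation: this should follow once one identifies the indecomposable projectives and injectives before and after the reduction and tracks how their syzygies correspond. Thus it suffices to prove (ii) $\Rightarrow$ (i) for Nakayama algebras. For Nakayama algebras, both the Auslander-Reiten map and the $n$-Gorenstein conditions are explicit combinatorial conditions on the Kupisch series, and I would verify the equivalence directly. Combined with the generalised Iwanaga-Fuller result stated in the abstract (a $2n$-Gorenstein monomial algebra is automatically $(2n+1)$-Gorenstein), only the even indices need to be checked, and these translate into numerical inequalities on the Kupisch series that are equivalent to the bijectivity of $\psi_A$.

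I expect the first step to be the main obstacle. Well-definedness of $\psi_A$ is a statement only about the very last syzygy of each indecomposable injective, whereas being a string algebra (let alone Auslander-Gorenstein) is a condition on the global combinatorial shape of $Q$ and $I$. Propagating information from the last syzygy back to structural information about the relations requires a delicate argument: roughly, if two incompatible relation patterns coexisted at a vertex, one should be able to exhibit an indecomposable injective whose last syzygy breaks into several summands, contradicting the hypothesis. Making this argument precise, and ensuring that all cases (including injectives supported far from the "bad" vertex) are covered, is where the heart of the proof lies.
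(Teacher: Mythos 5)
Your overall strategy matches the paper's: prove the converse for Nakayama algebras and then use the cutting reduction of \Cref{chap::reduction} to lift to monomial algebras. However, there is a concrete gap in your first step. You aim to show that (ii) forces $A$ to be a \emph{string} algebra, but that is strictly too weak. The reduction machinery of \Cref{thm::cutting} requires $A$ to be \emph{2-Gorenstein}, and being a string algebra is a consequence of, but far from equivalent to, being 2-Gorenstein (compare \Cref{thm::classification2GorMonomial} with \Cref{cor::2GormonomialImpliesString}: conditions (1)--(3) plus the string condition do not imply the crucial condition (4)). So even if you carry out step~1 as described, you still cannot invoke the Nakayama reduction, and the argument stalls.

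In fact, showing that (ii) implies 2-Gorenstein is exactly where the bulk of the paper's effort in \Cref{chap::MainResultsForMonomial} is spent. The paper does not proceed by verifying the combinatorial conditions (1)--(4) of \Cref{thm::classification2GorMonomial} one by one; instead it argues homologically: after establishing that (ii) forces 1-Gorensteinness (\Cref{lem::ARwelldefbij_Implies_1Gor}) and biseriality (\Cref{lem:dimtopI<=2}), it computes an explicit minimal projective presentation $P_1\to P_0\to I(x)$ and shows $\idim P_1\le 1$ directly, using the structure of higher syzygies in monomial algebras (\Cref{lem::syzygiesInMonomialAlg}) together with a cokernel argument (\Cref{lem::cokerOfMonoInProjRes}) and a case analysis keyed to whether $I(x)$ is uniserial. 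You correctly identify the hard part of the proof (propagating a constraint on the last syzygy back to local relation structure), and your sketch of exhibiting an injective whose last syzygy decomposes is morally the idea behind \Cref{lem:dimtopI<=2}; but your proposal contains no argument reaching 2-Gorensteinness, so the reduction step cannot be started. Similarly, your treatment of the Nakayama case understates the difficulty: even there, the converse is nontrivial (it requires the Iwanaga--Fuller generalisation plus the interval inequalities of \Cref{lem::inequalitiesSocTop} and its corollaries), rather than a routine unpacking of the Kupisch series.
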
 

The other main achievement of this paper is that it provides a deep insight about the structure of Auslander-Gorenstein monomial algebras.

\begin{theorem*}[\Cref{cor::2GormonomialImpliesString}]
    Every Auslander-Gorenstein monomial algebra is a string algebra.
\end{theorem*}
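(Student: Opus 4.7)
Since the theorem is stated as a corollary, I expect the proof to rely on a combinatorial characterization of the 2-Gorenstein condition for monomial algebras that is established earlier in \Cref{chap::monomial}. My plan is to identify such a characterization in terms of paths in $Q$, and then read off the two string algebra conditions from it. Recall that for a string algebra one must show (S1) at most two arrows start and at most two arrows end at every vertex, and (S2) for every arrow $\alpha$ there is at most one $\beta$ with $\alpha\beta\notin I$ and at most one $\gamma$ with $\gamma\alpha\notin I$. Since Auslander-Gorenstein implies 2-Gorenstein, it suffices to rule out violations of (S1) and (S2) under the 2-Gorenstein hypothesis.

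The setup I would use is the standard combinatorial description of modules over $A=kQ/I$: the indecomposable projective $P_v = e_v A$ has a basis given by the nonzero paths starting at $v$, and its radical decomposes as $\rad P_v = \bigoplus_{\alpha: v \to w} \alpha A$, where $\alpha A$ is the quotient of $P_{t(\alpha)}$ by the paths killed on the left by $\alpha$. The socle of $P_v$, and hence the first term $I^0$ of the minimal injective coresolution of $A_A$, is indexed by the maximal nonzero paths starting at $v$; dually, the indecomposable injectives are described by paths ending at a vertex. Using these descriptions, the 1-Gorenstein condition translates into the statement that for every $v$ and every maximal nonzero path $v\to\cdots\to u$, the indec. injective $I_u$ is projective, which in turn pins down the vertex $u$ and forces the path landing at $u$ to extend uniquely.

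With this in hand, the violations of (S1) and (S2) can be ruled out by direct contradiction. If three arrows $\alpha_1,\alpha_2,\alpha_3$ start at a vertex $v$, then $P_v$ contributes three distinct maximal-path-endpoints to $\soc A$; I would then show by matching socles and tops of indec. injectives against indec. projectives that the three injectives forced in $I^0$ cannot all be projective simultaneously. Similarly, if $\alpha:v\to w$ and $\beta_1\ne\beta_2$ both give $\alpha\beta_i\notin I$, the summand $\alpha A\subseteq\rad P_v$ is not uniserial, so the socle multiplicities of $P_v$ and $P_w$ do not match up with the same pool of indec. projective-injectives, again contradicting 1-Gorensteinness.

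The main obstacle is the dual direction --- getting the corresponding constraints on arrows \emph{ending} at a vertex and on left extensions $\gamma\alpha\notin I$ --- because 1-Gorensteinness is one-sided. This is where the full 2-Gorenstein hypothesis is needed: I would feed it into the second term $I^1$, computed as the injective envelope of $I^0/A$ using the explicit description above, and show that the flat (equivalently projective) dimension of $I^1$ being $\le 1$ enforces the dual string conditions. The paper's generalization of the Iwanaga-Fuller result (that 2-Gorenstein monomial algebras are automatically 3-Gorenstein) and its reduction to Nakayama algebras may provide the cleanest way to package this symmetry; in particular, once the algebra is transformed into a Nakayama algebra, the string conditions are essentially built into Nakayama quivers, and one can pull the argument back through the reduction.
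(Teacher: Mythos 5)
Your top-level plan is the right one and matches the paper: the corollary is proved by simply reading off the string axioms from the combinatorial classification of $2$-Gorenstein monomial algebras in \Cref{thm::classification2GorMonomial}. Biseriality is condition (1) there, and condition (3) (at each degree-$4$ vertex one can label incoming arrows $a_1,a_2$ and outgoing arrows $b_1,b_2$ so that $a_1b_2,a_2b_1\in I$) is exactly what forces, for each arrow $a$ with target of out-degree two, at least one of $ab_1,ab_2$ to lie in $I$, and dually. The paper's proof of \Cref{cor::2GormonomialImpliesString} is essentially that one-line observation plus the definition of a string algebra from \cite{BR87}; nothing about $I^1$ or the reduction machinery is invoked at this point.

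Two cautions about the embellishments you sketch. First, your claim that three maximal paths out of $v$ force the injective envelope $I^0$ of $P(v)$ to contain a non-projective summand is not how the paper rules this out, and would not in general be true: the $I^0$-summands are still injective envelopes of simples in $\soc P(v)$ and may each individually be projective-injective. The obstruction actually surfaces one step later, in $I^1$: the paper shows (Lemmas~\ref{lem::1GorMonomial: dimsocP(x)=dimtopI(x)}, \ref{pdim1Lemma}, \ref{biserialLemma}) that $I(t(p))$ with $p$ the maximal common prefix of the three maximal paths appears in $I^1$ and has $\pdim\ge 2$, violating the $2$-Gorenstein hypothesis; the duality between left and right conditions is then handled cleanly via $A$ $n$-Gorenstein $\iff$ $A^{op}$ $n$-Gorenstein, not by a separate computation of $I^1$. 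Second, the suggestion to route the argument through the Nakayama reduction is circular: the cutting procedure of \Cref{chap::reduction} is only defined once the algebra is already known to satisfy conditions (1)--(3) of \Cref{thm::classification2GorMonomial} (in particular it already looks like a gentle algebra at degree-$4$ vertices), so one cannot appeal to that reduction to prove the string conditions in the first place.
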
 

For the important subclass of gentle algebras, we obtain a full classification of the Auslander-Gorenstein property. This gives a simple combinatorial characterisation of the homological condition in question.

\begin{theorem*}[\Cref{gentleCharacterisation}]
Let $A=kQ/I$ be a gentle algebra. Then the following are equivalent:
\begin{enumerate}[\rm (i)]
    \item $A$ is Auslander-Gorenstein.
    \item For every vertex $v\in Q_0$ we have $\deg^{\text{in}}(v)=2$ if and only if $\deg^{\text{out}}(v)=2$.
\end{enumerate}
\end{theorem*}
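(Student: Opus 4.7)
The plan is to prove each implication separately using the Butler-Ringel description of indecomposable modules over a gentle algebra via strings. Throughout, I use that the Auslander-Gorenstein condition is left-right symmetric and that, by Geiss-Reiten, every gentle algebra is Iwanaga-Gorenstein, so $\idim A_A<\infty$ holds automatically. Establishing (i) therefore reduces to showing that $A$ is $n$-Gorenstein for every $n\ge 1$.

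For $(i)\Rightarrow(ii)$, it suffices (by passing to $A^{\text{op}}$) to prove $\deg^{\text{in}}(v)=2\Rightarrow\deg^{\text{out}}(v)=2$. Suppose for contradiction that $\deg^{\text{in}}(v)=2$ but $\deg^{\text{out}}(v)\le 1$. The gentle pairing axiom at $v$ (when $\deg^{\text{out}}(v)=1$, exactly one of the two incoming arrows composes to zero with the unique outgoing arrow; the case $\deg^{\text{out}}(v)=0$ is trivial) ensures that $v$ is the right endpoint of some maximal permitted thread. Hence $S_v$ is a direct summand of $\soc(P_{v'})$ for some $v'$, so $I_v$ is an indecomposable summand of $I^0$ in the minimal injective resolution of $A_A$. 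The $1$-Gorenstein condition forces $I_v$ to be projective. But $\deg^{\text{in}}(v)=2$ makes $\top(I_v)$ contain at least two simple summands (one per maximal permitted thread ending at $v$), whereas every indecomposable projective has simple top: a contradiction.

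For $(ii)\Rightarrow(i)$, I would proceed by induction on $n$. For the base case $n=1$, every indecomposable summand $I_w$ of $I^0$ corresponds to a right endpoint $w$ of a maximal permitted thread, so $\deg^{\text{out}}(w)\le 1$, and by (ii) also $\deg^{\text{in}}(w)\le 1$; this makes $I_w$ the uniserial string module of the maximal permitted costring ending at $w$. Tracing this costring back to its left endpoint $u$, maximality forces $\deg^{\text{in}}(u)\le 1$, and (ii) gives $\deg^{\text{out}}(u)\le 1$. The gentle pairing axioms along the thread then identify $P_u$ with the string module of this same thread, yielding $P_u\cong I_w$. Hence $I^0$ is projective. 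For the induction step, cosyzygies in the minimal injective resolution of $A_A$ decompose into string modules via the string calculus for gentle algebras, and a variant of the above socle-to-top analysis together with (ii) shows that each indecomposable summand $I_w$ of $I^{i+1}$ satisfies $\pdim I_w\le i+1$.

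The main obstacle is making the inductive step fully explicit: tracking the evolution of cosyzygies through branching vertices (where $\deg^{\text{in}}=\deg^{\text{out}}=2$) requires careful combinatorial bookkeeping on the string/costring structure. A cleaner route that I would pursue in parallel is to invoke the reduction procedure from $2$-Gorenstein monomial algebras to Nakayama algebras established in \Cref{chap::monomial}: under (ii) the reduction yields a Nakayama algebra whose Auslander-Gorensteinness can be decided via the combinatorial classification of Auslander-Gorenstein Nakayama algebras in \Cref{chap::Nakayama}, and the property then transfers back to $A$.
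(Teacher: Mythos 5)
Your argument for $(i)\Rightarrow(ii)$ is correct and essentially matches the paper's: the paper also observes that $\deg^{\mathrm{in}}(v)=2$ with $\deg^{\mathrm{out}}(v)\le 1$ forces some incoming arrow at $v$ to be right-maximal, so that $I(v)$ appears as a summand of $I^0$ in the minimal injective resolution of $A_A$, and $I(v)$ is not uniserial hence not projective-injective. The left-right symmetry then handles the other case, exactly as you propose.

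For $(ii)\Rightarrow(i)$ the proposal has a real gap and a mis-step. The paper does \emph{not} induct on $n$: it directly describes the full minimal injective resolution of every indecomposable projective $P(v)$ in two cases. When $\deg^{\mathrm{out}}(v)=2$, the resolution has length $1$, namely $0\to P(v)\to I(t_1)\oplus I(t_2)\to I(v)\to 0$ with $I(t_j)$ projective-injective by the key auxiliary lemma (under (ii), the endpoint of a right-maximal path carries a projective-injective $I$), and this same sequence is a minimal projective resolution of $I(v)$, so $\pdim I(v)=1$. When $\deg^{\mathrm{out}}(v)<2$, the resolution is the explicit sequence $(3.2.2)$ governed by a maximal critical path, and the terms in positive degrees are either projective-injective or have projective dimension controlled by the length of that critical path. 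Your base case $n=1$ is in the same spirit as the auxiliary lemma, but the inductive step you sketch is precisely the part that would require you to understand the shape of all higher cosyzygies, and that is not carried through --- you acknowledge this yourself. The critical-path analysis is what replaces it in the paper.

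The ``cleaner route'' you propose in the final paragraph does not work. You claim the Auslander-Gorenstein property of the reduced Nakayama algebra can be read off a ``combinatorial classification of Auslander-Gorenstein Nakayama algebras in Chapter 6,'' but no such classification exists; the paper explicitly states in that section that ``there is no known classification of the Auslander-Gorenstein ones, nor do we provide one here.'' So this route bottoms out in an unavailable result. Even setting that aside, invoking \Cref{thm::classification2GorMonomial} and the cutting machinery of \Cref{chap::reduction} to prove \Cref{gentleCharacterisation} would be anachronistic in the paper's logical order. The paper instead proves the gentle case directly and notes that the special features of gentle algebras (all relations of length two, forced local shape at degree-four vertices) make a short self-contained proof possible, culminating in the stronger \Cref{cor::1GoriffAG_gentle} that $1$-Gorenstein already implies Auslander-Gorenstein for gentle algebras --- a collapse your inductive scheme would not discover.
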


An explicit characterisation in terms of quivers and relations is also obtained for the 2-Gorenstein property of monomial algebras, see \Cref{thm::classification2GorMonomial}. As a direct consequence of this, we obtain the main result of \cite{M19}, which states that every monomial algebra with dominant dimension at least two is a Nakayama algebra (\Cref{cor::2GorMonAlgDomdimAtLeast2}).

Our classification result shows that 2-Gorenstein monomial algebras are closely related to gentle and Nakayama algebras. More precisely, such an algebra can be viewed as a collection of Nakayama algebras glued together at degree 4 vertices in a specific way. 
Thus, 
any 2-Gorenstein monomial algebra can be reduced to a Nakayama algebra by "cutting" the algebra at degree 4 vertices. Remarkably, several important homological properties of the algebra are preserved under this cutting process (see \Cref{thm::cutting}). As a key consequence, the classification problem of Auslander-Gorenstein monomial algebras reduces to that of Auslander-Gorenstein Nakayama algebras. 

\begin{theorem*}[\Cref{cor::redofQ1}]
      We can reduce the problem of classifying all Auslander-Gorenstein monomial algebras to classifying all Auslander-Gorenstein Nakayama algebras.
\end{theorem*}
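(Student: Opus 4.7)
The plan is to combine the structural description of 2-Gorenstein monomial algebras (Theorem~\ref{thm::classification2GorMonomial}) with the cutting procedure (Theorem~\ref{thm::cutting}), iterating the cut at every degree-4 vertex until no such vertex remains. Since every Auslander-Gorenstein algebra is in particular 2-Gorenstein, the structural theorem guarantees that any Auslander-Gorenstein monomial algebra $A$ is built from Nakayama "pieces" that have been glued together precisely at the degree-4 vertices of the underlying quiver $Q$, so cutting at those vertices is the natural inverse of this gluing.

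First I would fix an Auslander-Gorenstein monomial algebra $A=kQ/I$ and enumerate the degree-4 vertices $v_1,\dots,v_m$ of $Q$. Applying Theorem~\ref{thm::cutting} at $v_1$ yields a monomial algebra $A^{(1)}$ whose quiver has one fewer degree-4 vertex than $Q$ and which is still Auslander-Gorenstein, since the cutting operation preserves this homological property. Iterating the procedure $m$ times produces a (possibly disconnected) monomial algebra $B=B_1\sqcup\cdots\sqcup B_r$ with no degree-4 vertices in its quiver. By the structural part of Theorem~\ref{thm::classification2GorMonomial}, a 2-Gorenstein monomial algebra whose underlying quiver has no degree-4 vertex is a Nakayama algebra, so each $B_j$ is a Nakayama algebra, and by the cutting theorem each $B_j$ is Auslander-Gorenstein.

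For the reverse direction, I would show that the cutting data at each $v_i$ (i.e., which in-arrow is paired with which out-arrow in the formation of the degree-4 vertex, together with the choice of the two Nakayama strands that meet at $v_i$) is a finite combinatorial datum, and that any choice of such gluing data applied to a family of Auslander-Gorenstein Nakayama algebras produces a monomial algebra that is 2-Gorenstein by Theorem~\ref{thm::classification2GorMonomial}, and Auslander-Gorenstein again by Theorem~\ref{thm::cutting}. Consequently, the cutting procedure, together with the gluing data recording how it is to be reversed, sets up a correspondence between Auslander-Gorenstein monomial algebras and pairs consisting of a finite family of Auslander-Gorenstein Nakayama algebras plus admissible gluing data. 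This is exactly the reduction claimed.

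The main obstacle is ensuring that the Auslander-Gorenstein condition is preserved in \emph{both} directions of the cutting/gluing procedure, and that the gluing data can be described in purely combinatorial terms so that the reduction is genuine and not merely a structural decomposition. Both of these rest essentially on Theorem~\ref{thm::cutting} and Theorem~\ref{thm::classification2GorMonomial}; once these are in place, the argument consists of finitely many iterations of cutting and of a careful bookkeeping of the arrow pairings at each degree-4 vertex.
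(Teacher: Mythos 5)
Your proposal is correct and follows essentially the same approach as the paper: iterate the cutting procedure of Theorem~\ref{thm::cutting} at each degree-4 vertex, then observe (via conditions (1) and (2) of Theorem~\ref{thm::classification2GorMonomial}) that a 2-Gorenstein monomial algebra with no degree-4 vertex has in-degree and out-degree at most 1 at every vertex, hence is Nakayama. Your discussion of the reverse gluing direction is a reasonable elaboration of what "reduction" means, though the paper's phrasing treats the one-way cutting statement as sufficient.
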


This reduction is significant because Nakayama algebras form a much smaller and more tractable subclass of monomial algebras, often allowing for explicit computations. Classifying all Auslander-Gorenstein Nakayama algebras, however, turns out to be a surprisingly difficult task.

The most important application of the reduction process described above is Marczinzik's conjecture. We only prove this explicitly for Nakayama algebras and the reduction method allows us to extend it to monomial algebras. 
Similarly, we use this reduction process to  generalise a theorem of Fuller and Iwanaga from \cite{FI93}, which states that if a Nakayama algebra is $2$-Gorenstein, then it is also $3$-Gorenstein.

\begin{theorem*}[\Cref{cor::2kimplies2k+1_monomial}]
      If a monomial algebra is $2n$-Gorenstein then it is also $(2n+1)$-Gorenstein. This holds for every $n\ge 1.$
\end{theorem*}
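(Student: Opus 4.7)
The plan is to follow the two-stage strategy foreshadowed in the introduction: first prove the statement for Nakayama algebras, then use the cutting reduction (\Cref{thm::cutting}) to transport it to all monomial algebras. Any $2n$-Gorenstein monomial algebra with $n\ge 1$ is in particular $2$-Gorenstein, so the cutting procedure applies; since that procedure is asserted to preserve the relevant homological invariants, it suffices to prove the following Nakayama version: if a Nakayama algebra is $2n$-Gorenstein, then it is $(2n+1)$-Gorenstein.

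For the Nakayama case I would argue by induction on $n$, with the Fuller-Iwanaga theorem \cite{FI93} serving as the base case $n=1$. The task is to show that the $2n$-th term $I^{2n}$ in the minimal injective resolution of $B_B$ has projective dimension at most $2n$. Exploiting the Kupisch series and the uniserial structure of the indecomposable $B$-modules, each indecomposable injective summand of $I^{2n}$ should be trackable back to a summand of $I^{2n-2}$ via a double application of a syzygy-type operation on injective envelopes of uniserial quotients. The key step I expect is a parity statement of the form $\pdim I^{2n} \le \pdim I^{2n-2} + 2$; combined with the $2n$-Gorenstein assumption $\pdim I^{2n-2} \le 2n-2$, this then gives the required bound.

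An alternative dual route would be to phrase the question in terms of injective dimensions of indecomposable projectives, where the explicit syzygy descriptions for monomial algebras (syzygies of simples given by paths in $Q$ related to the zero relations generating $I$) are particularly transparent. In either formulation, the main obstacle is combinatorial: generalising Fuller-Iwanaga, which handles the single step from degree $2$ to degree $3$, to arbitrary $n$ requires a canonical and uniform bijection between the indecomposable injective summands of $I^{2n}$ and those of $I^{2n-2}$ that is compatible with projective dimensions. Producing such a bijection cleanly from the Kupisch data, rather than by an ad hoc case analysis, is where I expect the real work to lie; once the Nakayama step is in hand, the monomial case follows from \Cref{thm::cutting} with no further input.
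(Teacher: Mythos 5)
Your high-level plan — cut down to a Nakayama algebra using \Cref{thm::cutting}(1), then settle the Nakayama case — is exactly the reduction the paper uses, and the cutting side is handled correctly. The gap is entirely in the Nakayama step, which you leave as a plan rather than an argument. You propose an induction on $n$ anchored at Fuller--Iwanaga, with the key step being the inequality $\pdim I^{2n} \le \pdim I^{2n-2} + 2$, but you never establish it and you explicitly defer it as ``where I expect the real work to lie.'' That is precisely the content of \Cref{2kImplies2k+1}, so the proposal does not contain a proof of it. Worth noting: your proposed inequality is a stronger claim than what is needed (we only need $\pdim I^{2n}\le 2n$, and $\pdim I^{2n-2}$ could be strictly less than $2n-2$), and it is not obvious that it holds in general; so even as a roadmap it may be pointing at the wrong target.

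The paper's Nakayama argument is not an induction at all. For a fixed $n$, it directly compares the minimal projective resolution of $I^{2n}$ with the minimal injective resolution of $P(i)$. Using the integer-indexed model where each indecomposable is an interval $[x,y]$ and the Kupisch data is packaged into the monotone maps $f,g$, both resolutions have explicit closed forms; the desired bound $\pdim I^{2n}\le 2n$ becomes the inequality $f^{n+1}(c)\le f^n(d)$. This is established in two short steps: first $f^{n+1}(c)\le a+1$ by $n$ applications of \Cref{cor::Nakayama_f_g_formulas}(d) starting from $c<\top I^{2n-1}$, followed by monotonicity of $f$; second $a+1\le f^n(d)$ by $n$ applications of \Cref{cor::Nakayama_f_g_formulas}(a) starting from $d=g^n(a)+1$. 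The crucial technical input is \Cref{lem::inequalitiesSocTop}, which bounds the socle/top of any indecomposable against those of a projective or injective — this is the uniform control you correctly diagnose as necessary, but do not supply. Without it, neither your induction hypothesis nor your ``canonical bijection'' between summands of $I^{2n}$ and $I^{2n-2}$ is grounded.
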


As a final application of the reduction technique, we prove that monomial algebras satisfy a stronger version of the Auslander-Reiten Conjecture. For this, note that an algebra $A$ is called Iwanaga-Gorenstein if $\idim A_A= \idim {}_AA < \infty.$ We say that $A$ is \textcolor{blue}{$d$-Iwanaga-Gorenstein} if $\idim A_A= \idim {}_AA\le d.$

\begin{theorem*}[\Cref{cor::strongerVersionofARConj}]
    Let $A$ be a finite-dimensional monomial algebra with $n$ simple modules. If $A$ is $(4n-2)$-Gorenstein, then $A$ is $(4n-2)$-Iwanaga-Gorenstein.
\end{theorem*}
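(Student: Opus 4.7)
The plan is to use the cutting reduction of \Cref{thm::cutting} to transfer the statement from monomial algebras to Nakayama algebras and then invoke a sharp finitistic/Iwanaga-Gorenstein bound in the Nakayama case.

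First, since $A$ is $(4n-2)$-Gorenstein with $n\ge 1$, it is in particular $2$-Gorenstein, so I may apply the cutting procedure of \Cref{thm::cutting}. This produces a Nakayama algebra $B$, and since the theorem lists $n$-Gorensteinness among the homological properties preserved by cutting, $B$ is again $(4n-2)$-Gorenstein. The second thing I need from the reduction is a count: cutting replaces a degree-$4$ vertex by two disjoint degree-$2$ vertices, i.e.\ each cut increases the number of simples by exactly one. In a $2$-Gorenstein monomial algebra on $n$ vertices the classification from \Cref{thm::classification2GorMonomial} forces at most $n-1$ vertices of degree $4$ (a fully degree-$4$ structure would force a cycle of a forbidden kind). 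Consequently $B$ has at most $m\le 2n-1$ simple modules.

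Next, I would prove (or cite from the Nakayama chapter) the following finitistic bound: \emph{every connected Nakayama algebra with $m$ simples that is $2m$-Gorenstein is already $2m$-Iwanaga-Gorenstein.} The idea is standard for Nakayama algebras: minimal injective resolutions of $B_B$ are controlled by the Kupisch/Dyck path data, and either the resolution terminates within $2m$ steps or the sequence of socles becomes eventually periodic with all terms having flat dimension exceeding their index, contradicting the hypothesis at some level $\le 2m$. Combined with \Cref{cor::2kimplies2k+1_monomial} (the $2k\Rightarrow 2k+1$ passage), this forces $\idim {}_BB=\idim B_B\le 2m\le 4n-2$. Applying the reverse direction of \Cref{thm::cutting}, which allows one to assemble the injective resolutions of $A_A$ and ${}_AA$ from those of the Nakayama components, yields $\idim A_A=\idim{}_AA\le 4n-2$, as required.

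The main obstacle is the Nakayama step: although the rough shape of the bound $2m$ is dictated by Gustafson-type global dimension bounds, getting the precise constant and ensuring that the $n$-Gorenstein hypothesis for a \emph{single} explicit value of $n$ (rather than for all $n$) is enough requires a careful analysis of how flat dimensions of injective hulls progress along the minimal injective resolution of $B_B$. Once this bound is in hand, however, the reduction philosophy of \Cref{thm::cutting} does all the remaining work, and the bound $4n-2$ in the statement is precisely the product of the two inequalities $m\le 2n-1$ (from the cutting count) and $\idim B_B\le 2m$ (from the Nakayama bound).
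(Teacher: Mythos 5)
Your reduction to a Nakayama algebra $B$ via the cutting procedure is the right first move, but the heart of the argument -- the asserted ``Nakayama lemma'' that a Nakayama algebra with $m$ simples which is $2m$-Gorenstein must be $2m$-Iwanaga-Gorenstein -- is left entirely unproven, and as you yourself note this is the main obstacle. The Kupisch/Dyck-path sketch you offer (``the sequence of socles becomes eventually periodic with all terms having flat dimension exceeding their index'') is genuinely different from what the paper does and would require a substantial independent argument; in particular it is not an ingredient supplied anywhere in the Nakayama chapter, so it cannot simply be cited.

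The paper's route avoids this altogether. It first upgrades $(4n-2)$-Gorenstein to $(4n-1)$-Gorenstein using \Cref{cor::2kimplies2k+1_monomial}, then observes that $B$ has at most $2n$ simples (each cut adds one vertex, and there are at most $n$ degree-4 vertices to cut); your sharper count $m\le 2n-1$ is plausible but is justified only by a one-line appeal to a ``forbidden cycle,'' and it is in any case unnecessary. Then, instead of proving an Iwanaga-Gorenstein bound for Nakayama algebras directly, the paper runs the standard Auslander--Reiten-conjecture manoeuvre: if $\idim{}_BB\ge 4n-1$, one finds a simple $S$ with $\pdim I(S)\ge 4n-1$, deduces $\Ext^i_B(S,B)=0$ for $0\le i\le 4n-2$ from $(4n-1)$-Gorensteinness, dualizes a minimal projective resolution of $S$ to produce a $B^{\mathrm{op}}$-module of projective dimension at least $4n-1$, and contradicts the \emph{cited} fact that the finitistic dimension of a Nakayama algebra with $N$ simples is at most $2N-2\le 4n-2$. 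So the bound you are trying to manufacture from scratch is replaced by an Ext-vanishing trick plus an off-the-shelf finitistic-dimension estimate. Your proposal correctly locates where the difficulty sits, but does not close it, and the closing argument the paper uses is of a different flavour than what you envision.
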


Based on this evidence, we may pose the following question:
\\
\\
\textbf{Question.} Let $A$ be a finite-dimensional $k$-algebra with $n$ simple modules. Assume that $A$ is $(4n-2)$-Gorenstein. Does this imply that $A$ is also Iwanaga-Gorenstein?

\subsection{Organisation of the paper}{
In Section \ref{chap::preliminaries}, we start by recalling the necessary preliminaries on Auslander-Gorenstein algebras. Section \ref{chap::gentle} discusses the case of gentle algebras: It contains the combinatorial characterisation of the Auslander-Gorenstein property (\Cref{gentleCharacterisation}) and the proof of \hyperlink{conjectureM}{Marczinzik's conjecture} (\Cref{thm::AGiffARbijwelldefGentle}) for this class. In \Cref{chap::monomial}, we turn our attention to general monomial algebras, and provide a classification of 2-Gorenstein monomial algebras in terms of quivers and relations (\Cref{thm::classification2GorMonomial}). Building on this result, \Cref{chap::reduction} introduces the procedure to reduce 2-Gorenstein monomial algebras to Nakayama algebras, while preserving their relevant homological properties. \Cref{chap::Nakayama} focuses on the class of Nakayama algebras, it proves \hyperlink{conjectureM}{Marczinzik's conjecture} in this setting (\Cref{thm::AGiffARbijwelldef_Nak}) and generalises a result of Fuller and Iwanaga (\Cref{2kImplies2k+1}). Finally, in Section \ref{chap::MainResultsForMonomial}, we apply the reduction procedure described in \Cref{chap::reduction} to extend our results from Nakayama algebras to monomial algebras. In particular, \Cref{thm::AGiffARbijwelldef_Monomial} extends \hyperlink{conjectureM}{Marczinzik's conjecture}, and \Cref{cor::2kimplies2k+1_monomial} extends Fuller and Iwanaga's result. As a final application of this reduction method, Section \ref{subsec::strongAuslanderReitenConj} proves a stronger version of the Auslander-Reiten Conjecture for monomial algebras.

}

\subsection{Notation}{In this paper, we are interested in finite-dimensional algebras over a field $k.$ These will always be given as the path algebra of a finite quiver $Q$ modulo an admissible ideal $I$. We use the notation $A=kQ/I$ for this algebra, and call such algebras \textcolor{blue}{quiver algebras}. Whenever we talk about modules, we always mean finite-dimensional right $A$-modules, unless otherwise stated, and $\mod(A)$ denotes the corresponding module category. Recall that indecomposable projective, injective and simple modules over $A$ are enumerated by the vertices of $Q.$ The modules corresponding to a vertex $v$ are denoted by $P(v)=e_vA, I(v)=D(Ae_v),$ and $S_v$, respectively. 

}

\newpage

\section{Preliminaries}
\label{chap::preliminaries}
Let $A$ be a finite-dimensional $k$-algebra. Throughout this paper, we assume that the reader is familiar with the basics of homological algebra and the representation theory of finite-dimensional algebras, see for example \cite{ASS06} for a reference.

\subsection{Notation related to quivers}
\label{subsec::quiverNotation}

For a quiver $Q$ we use the notation $Q=(Q_0,Q_1,s,t)$, where $Q_0$ denotes the set of vertices and $Q_1$ the set of arrows of $Q$, $s$ and $t$ are two functions from $Q_1$ to $Q_0$ which assign to an arrow its start (resp. end) vertex. For a vertex $v\in Q_0$ we denote its in-degree by $\textcolor{blue}{\deg^{\text{in}}(v)}$ and its out-degree by $\textcolor{blue}{\deg^{\text{out}}(v)}$. The degree of this vertex is then $\textcolor{blue}{\deg(v)}=\deg^{\text{in}}(v)+\deg^{\text{out}}(v).$ 

In this paper, we compose arrows from left to right, i.e. we denote the composition of two arrows $x\xrightarrow[]{\alpha}y\xrightarrow[]{\beta}z$ by $\alpha\beta.$

Next, we introduce some less standard language and notation related to paths that are special to this paper. Let $p$ and $q$ be paths in $Q$. We say that $q$ is a \textcolor{blue}{left} (resp. \textcolor{blue}{right}) \textcolor{blue}{subpath} of $p$ if there is a path $r$ such that $p=qr$ (resp. $p=rq.$) A subpath is called  \textcolor{blue}{strict} if length$(r)\neq 0.$ Our convention is that the \textcolor{blue}{length} of a path is equal to the number of arrows it contains. So the path $e_x$ corresponding to a vertex $x$ has length $0.$ 

Let $I$ be an admissible ideal of the path algebra $kQ$. Then a path $p\notin I$ is said to be \textcolor{blue}{left-maximal} (resp. \textcolor{blue}{right-maximal}) if there is no arrow $a\in Q_1$ such that $ap\notin I$ (resp. $pa\notin I$). If a path is both left- and right-maximal, then we call it \textcolor{blue}{maximal}. 
On the other hand, we say that a path $p$ is a \textcolor{blue}{minimal relation} or a minimal path in $I$, if $p\in I$ but no strict right or left subpath of $p$ is contained in $I.$ For a path $p$ of non-zero length, $\textcolor{blue}{p^+}$ (resp. $\textcolor{blue}{p^-}$) is going to denote the path we obtain from $p$ by omitting its last (resp. first) arrow. 

Finally, assume that $I$ is generated by a set of paths, i.e. $A=kQ/I$ is a monomial algebra. Then, given a path $p\notin I$, there is a canonical uniserial $A$-module associated to this path. We denote this by $\textcolor{blue}{M(p)}.$

\subsection{Auslander-Gorenstein algebras}
\label{subsec::prliminariesAG}

\begin{definition}
\label{def::Auslander-Gorenstein}
Consider a minimal injective resolution of the regular representation $A_A$:
    $$0\longrightarrow A_A \longrightarrow I^0 \longrightarrow I^1 \longrightarrow \ldots. $$
Then $A$ is called 
\begin{enumerate}[\rm (1)]
    \item \textcolor{blue}{n-Gorenstein} for an $n\ge 1$ if $\pdim I^i\le i$ for every $0\le i<n.$
    \item \textcolor{blue}{Auslander-Gorenstein} if it is $n$-Gorenstein for all $n$ and $\idim A_A<\infty.$
    \item \textcolor{blue}{Auslander regular} if it is $n$-Gorenstein for all $n$ and $\gldim A<\infty.$
    
\end{enumerate}
\end{definition}

We list the most important properties of these algebras which will be needed in our proofs.

\begin{theorem}\cite[Corollary 5.5]{AR94}
\label{thm::preliminaries::IG}
Let $A$ be Auslander-Gorenstein. Then $A$ is Iwanaga-Gorenstein, i.e. $\idim A_A=\idim {}_AA< \infty.$
\end{theorem}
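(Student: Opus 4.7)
The plan is to leverage the left-right symmetry of the $n$-Gorenstein condition, and then use a dualisation argument on the finite injective resolution of $A_A$ to produce a corresponding one for ${}_A A$.

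First, I would invoke the classical fact, essentially due to Auslander, that the $n$-Gorenstein condition is left-right symmetric: $A$ is right $n$-Gorenstein if and only if it is left $n$-Gorenstein. The cleanest route is through the grade-theoretic reformulation: $A$ is $n$-Gorenstein iff $\grade \Ext^i_A(M,A) \ge i$ for every $M \in \mod A$ and every $0 \le i < n$. Since the object $A$ to the right of $\Hom$ is a two-sided bimodule, this condition is symmetric in left and right modules, so the Auslander-Gorenstein hypothesis forces $A$ to be left $n$-Gorenstein for every $n \ge 1$ as well.

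Next, let $d = \idim A_A < \infty$ and consider the finite minimal injective coresolution
\[
0 \longrightarrow A_A \longrightarrow I^0 \longrightarrow \cdots \longrightarrow I^d \longrightarrow 0.
\]
Since $A$ is $(d+1)$-Gorenstein, $\pdim I^i \le i \le d$ for every $i$. The idea is to replace each $I^i$ by a finite projective resolution of length at most $d$, assemble these into a Cartan--Eilenberg bicomplex, and then apply $\Hom_A(-,A)$ followed by the $k$-duality $D = \Hom_k(-,k)$. A diagram chase on the resulting bounded bicomplex should produce a finite injective resolution of ${}_A A$ of length at most $d$, giving $\idim {}_A A \le d$. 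The essential input is the uniform bound $\pdim I^i \le d$ supplied by the Auslander-Gorenstein hypothesis, without which this dualisation would fail to terminate.

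Finally, with both self-injective dimensions finite, Zaks' theorem for two-sided Noetherian rings forces $\idim A_A = \idim {}_A A$, so $A$ is Iwanaga-Gorenstein. The hard part is the middle step: one must trace through the bicomplex carefully to verify that the dualisation really produces ${}_A A$ concentrated in degree zero with no spurious higher cohomology. Both the symmetry of the $n$-Gorenstein condition and the uniform projective-dimension bound on the $I^i$ are indispensable for pushing this bookkeeping through.
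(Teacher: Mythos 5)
The paper does not prove this theorem itself; it is quoted directly from Auslander--Reiten \cite{AR94}, so there is no internal proof to compare against. More importantly, the dualisation step at the heart of your argument does not go through, and the obstruction is structural rather than a matter of bookkeeping. Assembling a Cartan--Eilenberg bicomplex from the finite projective resolutions of the $I^i$ and totalising does produce a bounded complex $T^\bullet$ of finitely generated projective \emph{right} $A$-modules, concentrated in degrees $0,\ldots,d$ and exact except at degree $0$ where the kernel is $A_A$. But applying $\Hom_A(-,A)$ to a bounded complex of projectives that is quasi-isomorphic to $A_A$ produces a bounded complex of projective \emph{left} $A$-modules whose only cohomology is $\Hom_A(A_A,A_A)\cong{}_AA$ in degree zero --- that is, a finite \emph{projective} resolution of ${}_AA$, which is vacuous since ${}_AA$ is already projective. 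Applying $D$ on top of this converts it into a finite \emph{injective} coresolution of $D({}_AA)=D(A)_A$, again vacuous since $D(A)_A$ is already injective. The Nakayama-type compositions $D\Hom_A(-,A)$ and $\Hom_A(-,A)\circ D$ always shuttle you around the projective/injective and left/right squares in exactly the way that keeps the output trivial; no diagram chase in the bicomplex will force out a finite injective coresolution of ${}_AA$, because this construction never sees that object.

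What you are trying to show is exactly the Gorenstein Symmetry Conjecture in the Auslander-Gorenstein case, and the mechanism in \cite{AR94} is the bijection recorded as \Cref{thm::preliminaries::ARbij}(1), not formal duality. Concretely: if $\idim A_A=d<\infty$ then $\idim P(x)\le d$ for every vertex $x$. Since $A$ is $n$-Gorenstein for every $n$, the bijections $\psi_{d'}$ are available for all $d'\le d$, and summing them shows that the number of indecomposable injectives with $\pdim\le d$ equals the number of indecomposable projectives with $\idim\le d$ --- which is all of them. Hence $\pdim I(x)\le d$ for every $x$. Because $\idim{}_AA=\pdim D(A)_A=\max_x\pdim I(x)$, this gives $\idim{}_AA\le d<\infty$, and Zaks' theorem then forces $\idim A_A=\idim{}_AA$. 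Your first step (left-right symmetry via the grade characterisation) and your last step (Zaks) are both fine; replace the bicomplex dualisation in the middle with this counting argument.
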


Note that sometimes in the literature, $n$-Iwanaga-Gorenstein algebras (i.e. algebras with $\idim A_A=\idim {}_AA\le n$) are also called $n$-Gorenstein. However, these two notions do not coincide, and we will always be referring to the one in \Cref{def::Auslander-Gorenstein}.

\begin{theorem} \cite[Auslander's theorem 3.7]{FGR75}
\label{thm::preliminaries:left-right-symmetry}
    An algebra $A$ is n-Gorenstein if and only if $A^{op}$ is.
    Thanks to \Cref{thm::preliminaries::IG}, $A$ is Auslander-Gorenstein if and only if $A^{op}$ is.
\end{theorem}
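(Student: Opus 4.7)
The plan is to replace the asymmetric definition of $n$-Gorenstein (which singles out the minimal injective resolution of $A_A$) with a manifestly self-dual Ext-vanishing condition. Concretely, the equivalent statement I would aim for is: $A$ is $n$-Gorenstein if and only if for every finitely generated right $A$-module $M$ and every pair $0\le j<i<n$,
$$\Ext^j_{A^{op}}\bigl(\Ext^i_A(M,A),\,A\bigr)=0,$$
i.e.\ $\grade_{A^{op}}\Ext^i_A(M,A)\ge i$ for all $i<n$. Once this equivalence is in hand, the analogous statement for $A^{op}$ (obtained by interchanging ``left'' and ``right'' throughout) is literally the same Ext-vanishing condition read with the roles reversed, so applying the equivalence to $A^{op}$ yields the left-right symmetry for free.

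To establish the equivalence, I would fix a finitely generated right $A$-module $M$, take a minimal projective resolution $P_\bullet\to M$, and consider the double complex $\Hom_A(P_\bullet,I^\bullet)$, where $I^\bullet$ is the minimal injective resolution of $A_A$. One of its spectral sequences has
$$E_2^{p,q}=\Ext^p_{A^{op}}\bigl(\Ext^q_A(M,A),\,A\bigr),$$
while the other is governed by $\Hom_A(P_\bullet,I^q)$, whose vanishing in low homological degrees is equivalent to an $\operatorname{Tor}$-vanishing statement against $I^q$. The hypothesis $\mathrm{flat}\dim I^i\le i$ for $i<n$ (which coincides with $\pdim I^i\le i$, since for finite-dimensional algebras all modules in sight are finitely generated) forces the appropriate vanishing in the second spectral sequence, and a dimension-shifting argument then extracts the Ext-grade inequality on the first one. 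The converse runs in the same diagram: any violation of $\mathrm{flat}\dim I^i\le i$ produces a non-zero $\Ext^j_{A^{op}}(N,A)$ for some finitely generated $N$ arising as a subquotient of $\Ext^i_A(M,A)$ for a suitable $M$. The main obstacle will be bookkeeping on the two spectral sequences so that the cut-off $i<n$ on the injective-resolution side lines up precisely with the cut-off $i<n$ on the Ext-grade side, and no off-by-one error creeps in.

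Granting the equivalence, the symmetry of the $n$-Gorenstein property is immediate because the Ext-vanishing condition treats $A$ and $A^{op}$ symmetrically. For the Auslander-Gorenstein assertion, suppose $A$ is Auslander-Gorenstein. Then $A$ is $n$-Gorenstein for every $n$, and by \Cref{thm::preliminaries::IG} also Iwanaga-Gorenstein, so $\idim{}_AA=\idim A_A<\infty$. Applying the left-right symmetry at each $n$ gives that $A^{op}$ is $n$-Gorenstein for every $n$, and $\idim(A^{op})_{A^{op}}=\idim{}_AA<\infty$, whence $A^{op}$ is Auslander-Gorenstein. The reverse implication is identical.
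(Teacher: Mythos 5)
The paper does not prove this statement; it cites Auslander's theorem 3.7 from \cite{FGR75} and then derives the Auslander-Gorenstein part from \Cref{thm::preliminaries::IG} in one line. Your high-level strategy is indeed the one behind Auslander's proof: replace the asymmetric flat-dimension condition on the minimal injective resolution by a grade/Ext-vanishing condition. Your handling of the second sentence (the Auslander-Gorenstein case) is correct and matches the paper.

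However, there are two genuine gaps in the sketch for the $n$-Gorenstein symmetry. First, the double complex you chose, $\Hom_A(P_\bullet, I^\bullet)$, is degenerate and carries no nontrivial information: since each $I^q$ is injective, $\Ext^p_A(M, I^q)=0$ for $p>0$, so the second spectral sequence has $E_1^{p,q}=\Hom_A(M,I^q)\delta_{p,0}$ and $E_2^{p,q}=\Ext^q_A(M,A)\delta_{p,0}$, while the first degenerates to $\Ext^p_A(M,A)\delta_{q,0}$; neither yields $\Ext^p_{A^{op}}(\Ext^q_A(M,A),A)$. The double-dual spectral sequence instead arises from something like $\Hom_{A^{op}}(\Hom_A(P_\bullet,A),J^\bullet)$ with $J^\bullet$ an injective resolution of ${}_AA$. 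Second, and more fundamentally, your claim that the Ext-vanishing condition is ``manifestly self-dual'' so that symmetry comes for free is not correct. The condition for $A$ reads $\Ext^j_{A^{op}}(\Ext^i_A(M,A),A)=0$ for finitely generated \emph{right} $A$-modules $M$; read for $A^{op}$, it becomes $\Ext^j_A(\Ext^i_{A^{op}}(N,A),A)=0$ for finitely generated \emph{left} $A$-modules $N$. These conditions apply the two Ext functors in opposite orders and are not formally the same statement. Proving that the right-module grade condition is equivalent to the left-module one (together with relating each to the flat-dimension bound) is precisely the technical content of Auslander's theorem; in the original argument the spectral sequence crosses sides, relating the flat-dimension bound on one side to the grade condition on the other, and the four conditions are then chained into an equivalence. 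The symmetry is a theorem, not a syntactic observation.
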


Thus, the $n$-Gorenstein property can also be checked using minimal projective resolutions of indecomposable injective modules. Explicitly, $A$ is $n$-Gorenstein if and only if  for every indecomposable injective $A$-module $I$ we have $\idim P_i\le i$ for every $0\le i <n$, where 
$$\ldots \longrightarrow P_1\longrightarrow P_0 \longrightarrow I \longrightarrow 0$$
is a minimal projective resolution of $I$.

\newpage

\begin{theorem}\cite[Proposition 5.4 and Corollary 5.5]{AR94} 
\label{thm::preliminaries::ARbij}
\begin{enumerate}[\rm 1.]
    \item Let $A$ be an $n$-Gorenstein algebra. Then for any $d\le n$
$$\psi_d: \{I \text{ indec. injective}\ | \ \pdim I=d\}/_{\cong} \longrightarrow \{P\text{ indec. projective} \ | \ \idim P=d\}/_{\cong}$$ 
$$I\mapsto \psi_d(I)=\Omega^{d}({I})$$
defines a bijection. The inverse map is given by $P \mapsto \Sigma^{d}({P}).$

\item Let $A$ be an  Auslander-Gorenstein algebra. Then we can combine the above maps to obtain a bijection 
$$\psi_A: \{I\text{ indec. injective}\}/_{\cong} \longrightarrow \{P\text{ indec. projective}\}/_{\cong}$$ 
$$I\mapsto \psi_A(I)=\Omega^{\pdim I}({I}).$$
We have $\idim \psi_A(I)=\pdim I$, and the inverse map is given by $P \mapsto \Sigma^{\idim P}({P}).$
This is called the \textcolor{blue}{Auslander-Reiten bijection.}

\end{enumerate}

\end{theorem}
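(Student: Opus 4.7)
The plan is to prove (1) first and then derive (2) by specialising $d$ to $\pdim I$ for each indecomposable injective $I$ and taking $n = \idim A_A$, which is finite by Theorem \ref{thm::preliminaries::IG}. So fix an $n$-Gorenstein algebra $A$, a value $d \le n$, and an indecomposable injective $I$ with $\pdim I = d$. In the minimal projective resolution of $I$, the syzygy $P_d = \Omega^d(I)$ is projective by definition of projective dimension, so the real work is to show that (a) $P_d$ is indecomposable, (b) $\idim P_d = d$, and (c) $P \mapsto \Sigma^d(P)$ is a two-sided inverse.

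For (a), the principal tool is Auslander's reformulation of the $n$-Gorenstein condition: for every finitely generated right module $M$ and every $0 \le j \le n$, the module $\Ext^j(M,A)$ satisfies $\grade \ge j$, and on modules of grade exactly $d \le n$ the functor $\Ext^d(-,A)$ is a duality, with the double-Ext map being a natural isomorphism onto its image. Applying $\Hom_A(-,A)$ to the minimal projective resolution of $I$ and using reflexivity of projectives yields an exact sequence of finitely generated left $A$-modules
$$0 \to I^\ast \to P_0^\ast \to \cdots \to P_{d-1}^\ast \to P_d^\ast \to \Ext^d(I,A) \to 0.$$
Since the double-Ext recovers $I$ from $\Ext^d(I,A)$ and $I$ is indecomposable, the left module $\Ext^d(I,A)$ must itself be indecomposable. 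Minimality of the original resolution transfers, via $(-)^\ast$, to minimality of the tail of this sequence as a projective presentation of $\Ext^d(I,A)$ (one checks that radicals map to radicals under the duality, using the $n$-Gorenstein hypothesis). Hence $P_d^\ast$ is a projective cover of an indecomposable module, so it is itself indecomposable, and therefore so is $P_d$.

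Part (b) and the construction of the inverse are symmetric: by Theorem \ref{thm::preliminaries:left-right-symmetry}, $A^{\mathrm{op}}$ is also $n$-Gorenstein, so the same argument applied to minimal injective coresolutions shows that for an indecomposable projective $P$ with $\idim P = d$, the $d$-th cosyzygy $\Sigma^d(P)$ is an indecomposable injective with $\pdim = d$. Splicing the minimal projective resolution of $I$ to the minimal injective coresolution of $P_d$ produces a single complex that must coincide with the minimal injective coresolution of $I$ by uniqueness, from which $\Sigma^d(P_d) = I$ follows; the other direction is symmetric. Part (2) is then immediate: the hypotheses give $\idim A_A < \infty$, hence $A$ is $n$-Gorenstein for $n = \idim A_A$ and every indecomposable injective has $\pdim I \le n$, so the $\psi_d$ of (1) assemble into the single bijection $\psi_A$.

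The hard step will be the transport of indecomposability through the double-Ext isomorphism $\Ext^d(\Ext^d(I,A),A) \cong I$: establishing that this map is well-defined, natural, and an isomorphism is precisely where the Auslander condition built into the $n$-Gorenstein hypothesis is genuinely used, and without it indecomposability of $\Ext^d(I,A)$ (and hence of $P_d$) fails. Once that duality is in place, everything else is a formal consequence of minimality and the uniqueness of minimal (co)resolutions.
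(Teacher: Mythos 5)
The paper does not prove this theorem; it cites it verbatim from \cite{AR94} as a preliminary fact, so the relevant comparison is against Auslander and Reiten's original argument rather than against anything internal to the paper. Your proposal reconstructs precisely that grade-theoretic argument: the $n$-Gorenstein hypothesis gives a contravariant duality $\Ext^d(-,A)$ on modules of grade $d$, the dualized resolution exhibits $P_d^\ast$ as a projective cover of the indecomposable left module $\Ext^d(I,A)$, and the left-right symmetry of the $n$-Gorenstein condition handles the other direction. This is the right route — there is no known elementary shortcut.

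Two places need tightening. First, the displayed exact sequence
$$0 \to I^\ast \to P_0^\ast \to \cdots \to P_d^\ast \to \Ext^d(I,A) \to 0$$
is exact only if $\Ext^i(I,A)=0$ for $0<i<d$, equivalently $\grade I \ge d$. That $\grade I$ and $\pdim I$ coincide for indecomposable injectives with $\pdim I \le n$ over an $n$-Gorenstein algebra is itself a nontrivial part of \cite[Prop.\ 5.4]{AR94}, not a free consequence of applying $\Hom_A(-,A)$; you invoke the grade condition for $\Ext^j(M,A)$ but do not record the needed fact that $I$ itself has pure grade $d$. Without this, the sequence may not be exact in the middle and the projective-cover argument collapses. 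Second, the verification that $P \mapsto \Sigma^d(P)$ inverts $\psi_d$ by \emph{splicing} the minimal projective resolution of $I$ to the minimal injective coresolution of $P_d$ does not parse: the two complexes both emanate from $P_d$ but in opposite directions, so there is nothing to splice, and ``uniqueness of minimal coresolutions'' does not apply to a complex that is not a coresolution of $I$. The correct mechanism is again the Ext-duality: apply $\Ext^d_{A^{\mathrm{op}}}(-,A)$ to the dualized resolution of $I$, use reflexivity and the natural double-Ext isomorphism to recover the original resolution of $I$, and observe that the same manipulation starting from the injective coresolution of $P_d$ produces the same data, so the two indecomposable injectives must agree. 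Once the duality is set up carefully, $\idim P_d = d$ and $\Sigma^d(P_d)\cong I$ both fall out; as written, your argument asserts the conclusion of this step rather than proving it.
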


Motivated by this, we say that a finite-dimensional $k$-algebra $A$ \textcolor{blue}{has a well-defined Auslander-Reiten map}, if every indecomposable injective $A$-module $I$ has a finite minimal projective resolution in which the last non-zero term is indecomposable. In this case, the \textcolor{blue}{Auslander-Reiten map, $\psi_A$} of $A$ is defined as 

$$\psi_A: \{I\text{ indec. injective }\}/_{\cong} \longrightarrow \{P\text{ indec. projective }\}/_{\cong}$$ 
$$I\mapsto \Omega^{\pdim I}({I}).$$ 

This naturally induces a map on the simple $A$-modules, or in case $A=kQ/I$, on the vertices of the underlying quiver. We abuse notation and write $\psi_A$ to denote this map as well. Explicitly,  $\psi_A: Q_0 \mapsto Q_0$ is defined as $\textcolor{blue}{\psi_A(x)}=y$ if $\psi_A(I(x))=P(y)$ for $x,y\in Q_0.$

\newpage
\section{Gentle algebras and the Auslander-Gorenstein property}
\label{chap::gentle}

In this chapter, we examine the Auslander-Gorenstein property in the context of gentle algebras.  Introduced by Assem and Skowro\'nski in their 1987 paper \cite{AS87}, gentle algebras emerged from tilting theory as a means to classify iterated tilted algebras of path algebras of type $\Tilde{A}$. Since then, they have gained a lot of interest and as of today, we have a very good understanding of their module category and their derived category. Gentle algebras belong to the class of string algebras, therefore, their indecomposable modules are all string or band modules, we know how irreducible morphisms look in the module category, and can describe an explicit basis of the homomorphism space 
between any two indecomposable string modules \cite{BR87}, \cite{CB89}. This explicit and combinatorial description makes them an ideal class for investigating homological properties and for testing related conjectures.
Gentle algebras were found to have many remarkable properties. For instance, the class of gentle algebras is closed under derived equivalences \cite{SZ03}, their derived category is tame \cite{BM03}, and they are Iwanaga-Gorenstein \cite{GR05}. This last result makes them natural candidates for investigating the Auslander-Gorenstein property, as it suffices to focus on the $n$-Gorenstein conditions.

In the following, we successfully answer all questions raised in \Cref{chap::introduction} for the class of gentle algebras. 
First, in \Cref{gentleCharacterisation} we express the Auslander-Gorenstein property as a completely combinatorial condition on the underlying quiver. Next, we show in \Cref{thm::AGiffARbijwelldefGentle} that Marczinzik's conjecture is true for every gentle algebra, so the Auslander-Reiten bijection can be used to characterise the Auslander-Gorenstein property in this case. Finally,  we give a combinatorial description of this bijection in terms of $Q$ and $I$ in \Cref{cor::descriptionARbijGentle}.

\subsection{Definitions for gentle algebras}
\label{subsec::definitionsGentle}
Let us begin by recalling the definition of gentle algebras and fixing some notation specific to these algebras. 

\begin{definition}
\label{def::gentleAlgebras}
Let $A=kQ/I$ be a finite-dimensional quiver algebra. Then $A$ is called \textcolor{blue}{gentle} if it satisfies the following conditions:

\begin{enumerate}[\rm (i)]
\item $Q$ is biserial, i.e. there are at most 2 incoming and at most 2 outgoing arrows at every vertex

\item for any $a_1, a_2, b \in Q_1$ with $a_1\neq a_2$ and $t(a_1) = t(a_2) = s(b),$ we have
$|\{a_1b, a_2b \} \cap I| = 1$

\item for any $a, b_1, b_2 \in Q_1$ with $b_1\neq b_2$ and $t(a) = s(b_1) = s(b_2)$, we have
$|\{ab_1, ab_2 \} \cap I| = 1.$

\item $I$ is generated by a set of paths of length 2.
\end{enumerate}

\end{definition}

We will use the notation from \cite{GR05}. Let $A=KQ/I$ be a gentle algebra. We say that a path $p=a_1a_2\ldots a_m$ in $Q$, where the $a_i$ are arrows, is \textcolor{blue}{critical} if $a_ia_{i+1}\in I$ for every $1\le i\le m-1.$ Let $a$ be an arrow. Then there is a unique path $\textcolor{blue}{p(a)}$ in $Q$ such that $ap(a)\notin I$ is right-maximal. Note that this path is non-zero, but $p(a)=e_{t(a)}$ is possible. Dually, there is a unique path $\textcolor{blue}{i(a)}$ such that $i(a)a\notin I$ is left-maximal. These can be used to describe the indecomposable projective and injective modules, which are string modules of a particularly simple form. 

Let $x\in Q_0.$ Then by \Cref{def::gentleAlgebras}, there are either one or two right-maximal paths starting at $x$. In the first case, $P(x)=e_xA\cong M(p)$ is uniserial, and corresponds to the unique right-maximal path $p$ starting at $x.$ If $p$ has length at least one, then $p=b p(b)$ for the unique arrow $b$ with $s(b)=x.$
In the second case, the two right-maximal paths $p_1$ and $p_2$ with $s(p_1)=s(p_2)=x$ must start with two different arrows, $b_1$ and $b_2$. Then $P(x)= e_xA$ has a simple top isomorphic to $S_x$ and its radical is a direct sum of two uniserial modules: $\rad P(x) \cong M(p(b_1))\oplus M(p(b_2)).$ Dually, $I(x)=D(Ae_x)$ is either uniserial, or there are two different arrows $a_1$ and $a_2$ ending at $x$, in which case $I(x)$ has a simple socle and $I(x)/\soc I(x) \cong M(i(a_1))\oplus M(i(a_2))$. Every indecomposable projective-injective module is uniserial.

\begin{example} 
\label{ex::section3.1}
Consider the algebra $A=kQ/\langle ab,c^2 \rangle$ where $Q$ is the quiver

{\centering
    \begin{tikzpicture}[font=\normalsize]
    
    \node (A) at (-0.3, 0) {$1$};
    \node (B) at (1, 0) {$2$};
    \node (C) at (2.3, 0) {$3$};
    
    \node (AB) at (0.3, -0.2) {$a$};
    \node (BC) at (1.7, -0.2) {$b$};

    \draw[->] (A) to (B){};
    \draw[->] (B) to (C){};
  
    \draw[->, loop] (B) to (B) node[xshift=0.5 cm,yshift=1cm] {$c$};


\end{tikzpicture}

}
This defines a gentle algebra. The indecomposable projective and injective modules are:

 \vspace{10 pt}{\centering

\resizebox{\linewidth}{!}{
    \input{pictures/P2_exampleSection3}
    }
}
 \vspace{5 pt}

So the uniserial ones are: 
$$P(1)\cong M(ap(a))=M(acb)=M(i(b)b)\cong I(3),\ P(3)\cong S_3,\ I(1)\cong S_1.$$

For the non-uniserial ones we have: $$\rad P(2)\cong M(p(c))\oplus M(p(b))=M(b)\oplus M(e_3)\text{ and  }I(2)/\soc I(2)\cong M(i(a))\oplus M(i(c))=M(e_1)\oplus M(a).$$

\end{example}

\vspace{10pt}

\subsection{Characterisation of gentle Auslander-Gorenstein algebras} 

Our first main result establishes that for gentle algebras the Auslander-Gorenstein property can be translated to a simple condition on the underlying quiver.

\begin{theorem}
\label{gentleCharacterisation}
Let $A=kQ/I$ be a gentle algebra. Then the following are equivalent:
\begin{enumerate}[\rm (i)]
    \item $A$ is Auslander-Gorenstein.
    \item For every vertex $v\in Q_0$ $\deg^{\text{in}}(v)=2$ if and only if $\deg^{\text{out}}(v)=2$.
\end{enumerate}
\end{theorem}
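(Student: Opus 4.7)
Since Geiß and Reiten \cite{GR05} showed that every gentle algebra is Iwanaga-Gorenstein, the finite-injective-dimension requirement in the definition of Auslander-Gorenstein is automatic. The task therefore reduces to deciding when the gentle algebra $A$ is $n$-Gorenstein for every $n$. Using Auslander's left-right symmetry (\Cref{thm::preliminaries:left-right-symmetry}), I will work with the criterion recorded after \Cref{thm::preliminaries:left-right-symmetry}: $A$ is $n$-Gorenstein if and only if every term $P_i$ appearing in a minimal projective resolution of every indecomposable injective satisfies $\idim P_i \le i$.

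\textbf{Step 1 (Balanced degrees vs.\ non-uniseriality).} From the module descriptions in \Cref{subsec::definitionsGentle}, $P(v)$ is non-uniserial iff $\deg^{\mathrm{out}}(v)=2$, and $I(v)$ is non-uniserial iff $\deg^{\mathrm{in}}(v)=2$. Condition $(ii)$ therefore translates into the purely representation-theoretic statement that $P(v)$ is uniserial exactly when $I(v)$ is uniserial.

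\textbf{Step 2 (Explicit syzygies in gentle algebras).} By Butler-Ringel \cite{BR87}, the syzygy of a string module over a string algebra decomposes as a direct sum of string modules. For an indecomposable injective $I(v)$ with $\deg^{\mathrm{in}}(v)=2$ and incoming arrows $a_1,a_2$, the projective cover is $P(s(i(a_1)))\oplus P(s(i(a_2)))$, and using the critical-path combinatorics $p(a),i(a)$ from \Cref{subsec::definitionsGentle} I will describe $\Omega I(v)$ as a direct sum of string modules $M(q_1)\oplus M(q_2)$, where each $q_j$ is a subpath of a right-maximal path determined by the local combinatorics at $s(i(a_j))$. Iterating this, every summand appearing in $\Omega^k I(v)$ is of the form $M(p)$ where $p$ is a subpath of a maximal path in $Q$, and its projective cover is $P(s(p))$.

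\textbf{Step 3 (Implication $(i)\Rightarrow(ii)$).} Suppose $A$ is Auslander-Gorenstein but that the balanced condition fails: say $\deg^{\mathrm{in}}(v)=2$ while $\deg^{\mathrm{out}}(v)\le 1$ (the other case follows by passing to $A^{\mathrm{op}}$ via \Cref{thm::preliminaries:left-right-symmetry}). Then $I(v)$ is non-uniserial but $P(v)$ is uniserial. I will find, using Step 2, an index $k$ and an indecomposable summand of $\Omega^k I(v)$ whose projective cover is $P(v)$, by walking the combinatorics of paths that enter $v$. Comparing $\idim P(v)$ (controlled by how many cosyzygies are needed to reach an indecomposable injective starting from the uniserial module $P(v)$) with the position $k+1$ will force $\idim P_{k+1}\ge k+2$, contradicting the $(k+2)$-Gorenstein property. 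The crucial point is that, because $P(v)$ is uniserial while $I(v)$ is not, the cosyzygies of $P(v)$ cannot "terminate on time" at an injective whose projective dimension is bounded by its level.

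\textbf{Step 4 (Implication $(ii)\Rightarrow(i)$).} I proceed by induction on $n$, proving that each $P_i$ in the minimal projective resolution of every indecomposable injective satisfies $\idim P_i\le i$. The balanced condition from Step 1 guarantees that whenever a syzygy $\Omega^k I(v)$ has a summand whose projective cover $P(w)$ is non-uniserial, the corresponding $I(w)$ is also non-uniserial, and symmetrically. This symmetry propagates along strings and prevents the injective dimension of projective summands at level $i$ from exceeding $i$. The key computation is that each critical path in $Q$ contributes a uniformly bounded contribution at each stage, controlled by its length, so that the $n$-Gorenstein condition is inherited from one syzygy step to the next.

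\textbf{Main obstacle.} The main difficulty lies in Step 3: one must pinpoint the precise level $k$ at which $P(v)$ enters the resolution and argue that the uniseriality of $P(v)$ combined with the non-uniseriality of $I(v)$ forces $\idim P(v)$ to be too large relative to $k$. This requires careful bookkeeping of maximal paths through $v$ and of how they iterate under $\Omega$. Step 4 requires an equally careful induction, but it is structurally easier because the balanced hypothesis keeps uniseriality patterns consistent on both sides.
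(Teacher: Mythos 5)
Your framework (reduce to $n$-Gorenstein for all $n$ via \cite{GR05}, translate condition (ii) into uniseriality of $P(v)$ vs.\ $I(v)$, work with projective resolutions of injectives) is aligned with the paper, but both of the non-trivial steps have genuine gaps.

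\textbf{Step 3 gap.} You propose to find a summand of $\Omega^k I(v)$ whose projective cover is $P(v)$ and then derive a contradiction at level $k+1$. This is the wrong resolution to look at, and the strategy can fail. Consider the gentle algebra on the quiver $1 \to 3 \leftarrow 2$ with no relations, and take $v=3$ (which has $\deg^{\mathrm{in}}=2$, $\deg^{\mathrm{out}}=0$). Here $\Omega I(3) = P(3) = S_3$, so $P(3)$ appears at level $1$, but $\idim P(3) = 1 \le 1$: no contradiction arises from $P(v)$ at its level. The actual violation in this example is at level~$0$: the projective cover $P(1)\oplus P(2)$ of $I(3)$ is not injective. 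The paper's argument for (i)$\Rightarrow$(ii) is much more direct: it shows that if (ii) fails, then $A$ is already not $1$-Gorenstein. Since one of the two arrows $\alpha,\beta$ into $v$ is a right-maximal path (because $\deg^{\mathrm{out}}(v)<2$ and $A$ is gentle), $S_v$ sits in $\soc P(s(\alpha))$, so the non-uniserial (hence non-projective) $I(v)$ is a summand of the injective envelope $I^0$ of $P(s(\alpha))$. This violates $1$-Gorenstein at position~$0$ in the injective resolution of a projective, which you can then convert to the dual formulation using \Cref{thm::preliminaries:left-right-symmetry}. Your strategy of pushing the contradiction out to some $k+1$ is not only unnecessary but also not guaranteed to terminate correctly.

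\textbf{Step 4 gap.} Your sketch ("induction on $n$", "uniformly bounded contribution", "inherited from one syzygy step to the next") names no mechanism that actually controls $\idim P_i$. The concrete fact the paper uses—and which you have not isolated—is \Cref{lem::I(x)P(x)proj-inj}: under hypothesis (ii), if $p$ is a right-maximal path then $I(t(p))$ is projective-injective. This single lemma is what lets one write down the injective resolution of $P(v)$ explicitly. When $\deg^{\mathrm{out}}(v)=2$ the resolution is $0\to P(v)\to I(t_1)\oplus I(t_2)\to I(v)\to 0$ with $I(t_j)$ projective-injective, forcing $\pdim I(v)=1$. When $\deg^{\mathrm{out}}(v)<2$, $P(v)$ is uniserial and its cosyzygies are governed by the maximal critical path through the unique arrow into $v$, with every middle term of the resolution being a uniserial projective-injective and only the last term possibly having higher projective dimension equal to its level. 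Without this lemma, "propagation of uniseriality" and "bounded contribution per critical path" are slogans, not a proof, and the induction you describe does not close.

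In short: the key observation you are missing for (i)$\Rightarrow$(ii) is that the balanced-degree condition is already forced by $1$-Gorenstein-ness (so (i) is equivalent to $1$-Gorenstein for gentle algebras), and the key missing ingredient for (ii)$\Rightarrow$(i) is the projective-injectivity of $I(t(p))$ for $p$ right-maximal.
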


Observe that condition (ii) can be equivalently phrased as the quiver $Q$ not containing vertices of the following four types: 

 \vspace{10 pt}
    {\centering

    \resizebox{0.7\linewidth}{!}{
    \input{pictures/forbiddenVertices}
    }

}

\vspace{10 pt}

The following lemma is central to our proof:

\begin{lemma}
\label{lem::I(x)P(x)proj-inj}
    Let $A=kQ/I$ be a gentle algebra so that it satisfies (ii) from \Cref{gentleCharacterisation}. Then if $p$ is a right-maximal path in $Q$, $I(t(p))$ is projective-injective. 
    Dually, if $p$ is left-maximal, $P(s(p))$ is projective-injective. 
\end{lemma}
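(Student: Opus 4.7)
The plan is to prove the statement for right-maximal $p$; the dual statement then follows by applying the same argument to $A^{\mathrm{op}}$, since gentleness and hypothesis (ii) of \Cref{gentleCharacterisation} are self-dual, and the standard $k$-duality swaps injective and projective modules.

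Let $p$ be right-maximal with $x = t(p)$. The first step is to extract tight degree constraints at $x$. If $\deg^{\text{out}}(x) = 2$ with outgoing arrows $b_1, b_2$, then condition (iii) of \Cref{def::gentleAlgebras} forces exactly one of $ab_1, ab_2$ to lie in $I$, where $a$ is the last arrow of $p$ (and if $p = e_x$, the outgoing arrows themselves already extend $p$); either way $p$ fails to be right-maximal. Hence $\deg^{\text{out}}(x) \leq 1$, and by hypothesis (ii) of \Cref{gentleCharacterisation} this forces $\deg^{\text{in}}(x) \leq 1$. The degenerate case $\deg^{\text{in}}(x) = 0$ forces $p = e_x$ and hence $\deg^{\text{out}}(x) = 0$, so $I(x) = S_x = P(x)$ is trivially projective-injective.

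The substantive case is $\deg^{\text{in}}(x) = 1$ with unique incoming arrow $a$. Here $I(x)$ is uniserial, and using the description recalled in \Cref{subsec::definitionsGentle} one identifies $I(x) = M(q)$ with $q := i(a)a$, the unique left-maximal path ending at $x$. Setting $y := s(q)$, my goal is to prove $P(y) = M(q)$, which yields $I(x) = P(y)$ projective-injective. A preliminary observation is that $q$ itself is right-maximal: if $\deg^{\text{out}}(x) = 0$ this is immediate, while if $\deg^{\text{out}}(x) = 1$ with outgoing arrow $b$, right-maximality of $p$ together with $\deg^{\text{in}}(x) = 1$ forces $ab \in I$, whence $qb \in I$ as well.

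The key technical step is to show $\deg^{\text{out}}(y) = 1$. Suppose for contradiction $\deg^{\text{out}}(y) = 2$; by \Cref{gentleCharacterisation}(ii) this gives $\deg^{\text{in}}(y) = 2$, with incoming arrows $c_1, c_2$. Let $\alpha$ be the first arrow of $q$; condition (ii) of \Cref{def::gentleAlgebras} applied to $\alpha$ says that exactly one of $c_1\alpha, c_2\alpha$ lies in $I$. Since the ideal of a gentle algebra is generated by length-$2$ monomials, the other arrow can be prepended to $q$ without creating any new length-$2$ relation, contradicting the left-maximality of $q$. Therefore $\deg^{\text{out}}(y) = 1$ and $P(y)$ is uniserial, equal to $M(r)$ for the unique right-maximal path $r$ starting at $y$. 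Since $q$ is itself right-maximal starting at $y$ and right-maximal paths in a string algebra are determined by their first arrow, $r = q$, and hence $P(y) = M(q) = I(x)$. I expect this last bootstrap to be the main obstacle: combining the left-maximality of $q$ with the gentle axioms at $y$ and \Cref{gentleCharacterisation}(ii) to pin down $\deg^{\text{out}}(y)$.
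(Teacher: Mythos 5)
Your proposal is correct and follows essentially the same route as the paper's proof: bound the degrees at $t(p)$ using the gentle axioms together with hypothesis (ii), identify $I(t(p))$ as the uniserial module for the unique left-maximal path $q$ ending there, bound the degrees at $s(q)$ using left-maximality of $q$ and (ii) again, and finally match $q$ against the unique right-maximal path at $s(q)$. The only cosmetic difference is that the paper deduces $\deg^{\text{in}}(s(q)) < 2$ directly from left-maximality and then applies (ii), whereas you argue by contradiction starting from $\deg^{\text{out}}(s(q)) = 2$; these are logically equivalent.
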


\begin{proof}
    If $p$ is right-maximal, then by the definition of gentle algebras, $\deg^{\text{out}}(t(p))< 2.$ Then assumption (ii) implies that  $\deg^{\text{in}}(t(p)) < 2,$ hence, $I(t(p))$ is uniserial, it corresponds to the unique left-maximal path ending at $t(p),$ which we can write as $ip$ for some path $i.$ Now we can use the left-maximality of $ip$ to obtain  $\deg^{\text{in}}(s(i)) < 2$, which then results in $\deg^{\text{out}}(s(i)) < 2$ by (ii). So $P(s(i))\cong M(q)$ where $q$ is the unique right-maximal path starting at $s(i)$. Note that $ip$ is a left subpath of $q$, and since $ip$ is right-maximal, $ip=q.$ This implies,  $I(t(p))\cong P(s(i))$ is projective-injective, and this is what we wanted. A dual argument proves the dual statement.
    
\end{proof}

\begin{proof}[Proof of \Cref{gentleCharacterisation}]
First, let $A$ be Auslander-Gorenstein, and for the sake of contradiction, assume that there is a vertex $v$ such that $\deg^{\text{in}}(v)=2$ but $\deg^{\text{out}}(v)<2.$ Let us denote the incoming arrows by $\alpha$ and $\beta$. Since $A$ is gentle, one of them, say $\alpha$, is a right-maximal path, meaning that there is no arrow $\gamma$ such that $\alpha\gamma\notin I.$  
To arrive at a contradiction consider the minimal injective resolution of $P(s(\alpha))=e_{s(\alpha)}A$. As $\alpha$ is a right-maximal path starting at $s(\alpha),$  $S_{t(\alpha)}=S_v$ is a direct summand of $\soc P(s(\alpha))$. So the injective envelope of this projective module contains $I(v)$ as a direct summand. However, since $\deg^{\text{in}}(v)=2,$ $I(v)$ is not uniserial, hence not projective-injective. Thus, $A$ is not $1$-Gorenstein, if such a $v$ exists. This means, there is no such $v.$

Since $A$ is gentle and $1$-Gorenstein, $A^{op}$ is also gentle and $1$-Gorenstein. Thus, by the previous argument, there is no vertex $v$ such that $(\deg_{Q^{op}}^{\text{in}}(v),\text{\textcolor{white}{ }}\deg_{Q^{op}}^{\text{out}}(v))=(2,1)$ or $(2,0)$ in $Q^{op}.$ 
The in-degree of a vertex is the same as the out-degree of this vertex in the opposite quiver and vice versa, so we can conclude that there is no vertex $v$ such that $(\deg_{Q}^{\text{in}}(v),\text{\textcolor{white}{ }}\deg_{Q}^{\text{out}}(v))=(1,2)$ or $(0,2)$ in $Q.$

For the other direction assume (ii). By \cite[Theorem 3.4]{GR05}, every gentle algebra is Iwanaga-Gorenstein, so it remains to show that if we consider a minimal injective resolution of an indecomposable projective module $P$
$$0\longrightarrow P \longrightarrow I^0 \longrightarrow I^1 \longrightarrow \ldots,$$
then $\pdim I^i\le i$ for every $i\ge 0.$  We do this by obtaining an explicit description of these resolutions. Let $v\in Q_0$. We consider two cases: when $P(v)$ is not uniserial, i.e. when $\deg^{\text{out}}(v)=2$, and when $P(v)$ is uniserial, i.e. when $\deg^{\text{out}}(v)<2.$ 
In the first case, $P(v)$ has a minimal injective resolution 
\begin{equation}\label{VInjRes}
0\longrightarrow P(v) \longrightarrow I(t_1) \oplus I(t_2) \longrightarrow I(v) \longrightarrow 0,
\end{equation}
where $t_1$ and $t_2$ are the endpoints of the two right-maximal paths $p_1$ and $p_2$ starting at $v.$ We have $\soc P(v)\cong S_{t_1}\oplus S_{t_2},$ so the injective envelope of $P(v)$ is $I(t_1)\oplus I(t_2).$ Since $p_j$ is right-maximal,  
$I(t_j)$ is projective-injective by \Cref{lem::I(x)P(x)proj-inj} for $j=1,2$, and corresponds to the left-and right-maximal path $i_jp_j.$

 \vspace{10 pt}
    {\centering

    \resizebox{0.3\linewidth}{!}{
    \input{pictures/GentleCharacterisation1}
    }
    

}

\vspace{10 pt}

Since $P(s(i_j))\cong I(t_j)$ is uniserial, $s(i_j)\neq v$ and $i_j$ has length at least one. This implies that $i_j$ is already left-maximal and
$i_1$ and $i_2$ are the two left-maximal paths ending at $v.$ Note that $i_1$ and $i_2$ end with different arrows.  We can then easily see that $\Sigma^1(P(v))\cong I(v).$
So $\idim P(v)=1.$ Moreover, (\ref{VInjRes}) is a minimal projective resolution of $I(v);$ therefore, $\pdim (I(v))=1.$ 
In conclusion, $\pdim I^i \le i$ is fulfilled for every term in a minimal injective resolution of $P(v).$

Now consider the case when $\deg^{\text{out}}(v)<2.$ Then $P(v)$ corresponds to the unique right-maximal path $p$ starting at $v$, i.e. $M(p)\cong P(v)$. Let $t=t(p).$ The same argument as in the previous case tells us that the injective envelope of $P(v)$ is the uniserial projective-injective module $I(t)\cong P(s(i)) \cong M(ip)$ corresponding to the unique left-maximal path $ip$ ending at $t.$ Then either $i=e_v$ and $P(v)$ is injective or $\Sigma^1(P(v))\cong M(i(\beta_1))$ where $\beta_1$ is the unique arrow ending at $v.$ Note that $i=i(\beta_1)\beta_1.$ 
The uniqueness of $\beta_1$ follows from (ii). In the first case, the minimal injective resolution of $P(v)$ clearly satisfies our conditions, so let us consider the second case. We have that $M(i(\beta_1))$ is either injective or $\Sigma^2(P(v))\cong M(i(\beta_2))$ where $\beta_2$ is the unique arrow such that $t(\beta_2)=s(\beta_1)$ and $\beta_2\beta_1\in I.$ 
We can continue this argument and get a maximal critical path $\beta_m\ldots\beta_2\beta_1$ ending with $\beta_1$ such that $\Sigma^j(P(v))\cong M(i(\beta_j))$ for every $j=1,\ldots,m$ and $M(i(\beta_j))$ is injective if and only if $j=m.$ Note that there is no infinite critical path ending with $\beta_1.$ If there were, $\beta_1$ would be contained in a critical cycle. However, $\deg^{\text{out}}(v)<2$ and we are in the case when $P(v)$ is not injective, so there is no arrow $\alpha$ starting at $v$ with $ \beta_1\alpha\in I.$ This ensures that $m<\infty$. Thus, a minimal injective resolution looks as follows:

\begin{equation}\label{IInjRes}
0\longrightarrow P(v) \longrightarrow I(t)  \longrightarrow I(s(\beta_1))  \longrightarrow I(s(\beta_2)) \longrightarrow \ldots \longrightarrow I(s(\beta_m)) \longrightarrow 0.
\end{equation}

\vspace{10 pt}

{\centering

    \resizebox{0.4\linewidth}{!}{
    \input{pictures/GentleCharacterisation2}
    }
    

}

We already discussed that $\pdim I(t)=0.$ By (\ref{VInjRes}), if $\deg^{\text{in}}(s(\beta_j))=2$ then $\pdim I(s(\beta_j))=1\le j$ for every $j\ge 1.$
Otherwise, $\deg^{\text{in}}(s(\beta_j))\le 1$ and $I(s(\beta_j))$ is uniserial, it corresponds to the unique left-maximal path $p'$ ending at $s(\beta_j).$ For $j<m$,
$p'=i(\beta_{j+1})\beta_{j+1}.$ By (ii), the only arrow starting at $s(\beta_j)=t(\beta_{j+1})$ is $\beta_j$ itself, and $\beta_{j+1}\beta_j\in I$. Thus, $p'$ is right-maximal, and then we can conclude with \Cref{lem::I(x)P(x)proj-inj} that 
$I(s(\beta_j))\cong M(p')$ is projective-injective.  

It remains to see what happens for $j=m.$ 
Note that since $\beta_m\beta_{m-1}\ldots\beta_1$ was a maximal critical path ending with $\beta_1$, there is no arrow $\alpha$ with $t(\alpha)=s(\beta_m)$ such that $\alpha\beta_m\in I.$ In particular, by (ii), at most one arrow ends at $s(\beta_m)$. Then a dual argument to the one above (or \cite[3.3 Proposition]{GR05}) tells us that the projective dimension of $I(s(\beta_m))$ is equal to the length of the maximal critical path starting with $\beta_m.$ We already discussed that there is no arrow $\alpha$ with $t(\beta_1)=s(\alpha)$ such that $\beta_1\alpha\in I;$  therefore, $\beta_m\beta_{m-1}\ldots\beta_1$ is this maximal critical path. Hence, $\pdim I(s(\beta_m))=\text{length}(\beta_m\ldots\beta_1)=m\le m.$ In conclusion, (ii) implies (i).

\end{proof}

\begin{remark}
Note that in (\ref{IInjRes}) all the middle terms have projective dimension $0$ or $1$, while $\pdim I(s(\beta_m))=m$ since $\deg^{\text{in}}(s(\beta_m))\neq 2.$
\end{remark}

The proof of this theorem implies the following corollary. 

\begin{corollary}
\label{cor::1GoriffAG_gentle}
For a gentle algebra $A$ the following are equivalent:
\begin{enumerate}[\rm (1)]
    \item $A$ is Auslander-Gorenstein.
    \item $A$ is $1$-Gorenstein. 
\end{enumerate}
\end{corollary}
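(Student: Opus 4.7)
The direction (1) $\Rightarrow$ (2) is immediate from \Cref{def::Auslander-Gorenstein}, so the content is in (2) $\Rightarrow$ (1). My plan is to extract the appropriate direction from the proof of \Cref{gentleCharacterisation}: the implication (i) $\Rightarrow$ (ii) in that proof actually only invokes the $1$-Gorenstein property, never the full Auslander-Gorenstein hypothesis.

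More precisely, the first half of the proof of \Cref{gentleCharacterisation} argues that if $A$ is Auslander-Gorenstein then no vertex $v$ can have $(\deg^{\text{in}}(v),\deg^{\text{out}}(v))=(2,1)$ or $(2,0)$ by exhibiting, for such a $v$, an indecomposable projective module $P(s(\alpha))$ whose injective envelope contains the non-uniserial (hence non-projective) $I(v)$ as a summand. This contradicts $1$-Gorensteinness alone, since the obstruction appears already in the term $I^{0}$ of the minimal injective resolution. So the same argument, run under the weaker hypothesis that $A$ is $1$-Gorenstein, rules out these vertex types. Applying \Cref{thm::preliminaries:left-right-symmetry}, $A^{op}$ is also gentle and $1$-Gorenstein, so the same argument performed on $A^{op}$ rules out vertices of type $(1,2)$ and $(0,2)$ in $Q$. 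Hence condition (ii) of \Cref{gentleCharacterisation} holds, and by that theorem $A$ is Auslander-Gorenstein.

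There is no real obstacle here: the only step worth checking carefully is that the obstruction constructed in the proof of \Cref{gentleCharacterisation} genuinely lives in $I^{0}$, so that $1$-Gorensteinness (which concerns only $I^{0}$) suffices to derive the contradiction. This is clear from the construction, since $I(v)$ appears as a summand of the injective envelope of $P(s(\alpha))$, i.e.\ in the $0$-th term of the minimal injective resolution.
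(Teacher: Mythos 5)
Your proposal is correct and matches exactly what the paper intends: the proof of \Cref{gentleCharacterisation} explicitly concludes ``$A$ is not $1$-Gorenstein'' when deriving (ii) from (i), so that direction already uses only the $1$-Gorenstein hypothesis, and (ii) $\Rightarrow$ (i) then upgrades to Auslander-Gorenstein. This is precisely what the paper means when it says the corollary is implied by the proof of the theorem.
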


It is an easy-to-prove fact that if $A=kQ/I$ is a gentle algebra then it has finite global dimension if and only if $Q$ does not contain a critical cycle. So we also obtain a completely combinatorial characterisation of the Auslander regular gentle algebras.

\begin{corollary}
Let $A=kQ/I$ be a gentle algebra. Then the following are equivalent:
\begin{enumerate}[\rm (i)]
    \item $A$ is Auslander regular.
    \item $Q$ does not contain a critical cycle and for every $v\in Q_0$ $\deg^{\text{out}}(v)=2$ if and only if $\deg^{\text{in}}(v)=2$.
    \end{enumerate}
\end{corollary}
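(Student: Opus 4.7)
The plan is to simply combine \Cref{gentleCharacterisation} (together with \Cref{cor::1GoriffAG_gentle}) with the observation that Auslander regularity differs from the Auslander--Gorenstein property only by requiring finite global dimension, and then invoke the quoted fact that a gentle algebra has finite global dimension if and only if its quiver admits no critical cycle.

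More precisely, unfolding \Cref{def::Auslander-Gorenstein}, $A$ is Auslander regular if and only if it is $n$-Gorenstein for every $n\ge 1$ and $\gldim A<\infty$. Since $\gldim A<\infty$ forces $\idim A_A<\infty$, Auslander regularity implies the Auslander--Gorenstein property, and conversely $A$ is Auslander regular precisely when $A$ is Auslander--Gorenstein and $\gldim A<\infty$. By \Cref{gentleCharacterisation} the Auslander--Gorenstein condition is equivalent to the degree condition in (ii). Thus it only remains to identify $\gldim A<\infty$ with the absence of a critical cycle.

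For this last equivalence I would argue as follows. In a gentle algebra, minimal projective resolutions of simple modules can be read off directly from the combinatorics of the arrows and the length-2 zero relations: if $S_v$ is not projective, then $\Omega(S_v)$ is a direct sum of (at most two) uniserial string modules whose tops are indexed by the arrow(s) $\beta$ with $t(\beta)=v$, and iterating this syzygy computation replaces each such arrow $\beta$ by the (at most two) arrows $\gamma$ with $\gamma\beta\in I$. Consequently $\Omega^k(S_v)$ is non-zero for all $k$ if and only if, starting from some arrow ending at $v$, one can extend indefinitely by prepending arrows with the previous arrow via a relation in $I$ -- and this happens exactly when $Q$ contains a critical cycle. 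Hence $\gldim A<\infty$ iff $Q$ has no critical cycle.

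Combining the two equivalences: $A$ is Auslander regular if and only if the degree condition of \Cref{gentleCharacterisation}(ii) holds and $Q$ contains no critical cycle, which is exactly (ii) of the present corollary. The main (and really only) nontrivial input is \Cref{gentleCharacterisation}; the global dimension step is the straightforward combinatorial check sketched above, and should not present any obstacle.
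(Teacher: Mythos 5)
Your proposal is correct and matches the paper's approach: the corollary is derived by combining \Cref{gentleCharacterisation} with the standard fact that a gentle algebra has finite global dimension precisely when its quiver has no critical cycle, together with the observation that Auslander regularity is the Auslander--Gorenstein property plus finite global dimension. The paper simply cites the global-dimension criterion as "easy-to-prove," whereas you sketch the syzygy argument behind it, but the structure of the deduction is the same.
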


\begin{example}
 Applying \Cref{gentleCharacterisation}, we can see that the following quivers give us Auslander-Gorenstein algebras for any choice of relations which make them gentle: 

 \vspace{10 pt}

{\centering

    \resizebox{0.9\linewidth}{!}{
    \input{pictures/GentleExamples}
    }
    

}
\vspace{5 pt}

The following quivers cannot belong to gentle Auslander-Gorenstein algebras:

\vspace{10 pt}
{\centering

    \resizebox{0.5\linewidth}{!}{
    \input{pictures/GentleNonExamples}
    }
    

}

\end{example}
\begin{example}
    Let us spell out the computations for the algebra from \Cref{ex::section3.1}. 
    We have the following minimal injective resolutions for the indecomposable projective modules: 
    
\vspace{5pt}

    $$0\longrightarrow P(1)\longrightarrow I(3)\longrightarrow 0$$
    $$0\longrightarrow P(2)\longrightarrow I(3)\oplus I(3)\longrightarrow I(2)\longrightarrow 0$$
    $$0\longrightarrow P(3)\longrightarrow I(3)\longrightarrow I(2)\longrightarrow I(1) \longrightarrow 0$$

\vspace{5pt}
We have $\pdim I(3)=0$. The second resolution corresponds to (\ref{VInjRes}). It is also a minimal projective resolution of $I(2) $, thus, $\pdim I(2)=1$. The third resolution corresponds to the one described in (\ref{IInjRes}), since $ab$ is a maximal critical path ending with $b.$  Finally, 
$$0\longrightarrow P(3)\longrightarrow P(2)\longrightarrow P(1)\longrightarrow I(1) \longrightarrow 0$$
is a minimal projective resolution for $I(1).$ This corresponds to the dual of \ref{IInjRes}, since $ab$ is the maximal critical path starting with $a.$ So, $\pdim I(1)=2,$ which implies that $A$ from \Cref{ex::section3.1} is Auslander-Gorenstein, as expected from \Cref{gentleCharacterisation}.  
\end{example}

\subsection{The Auslander-Reiten bijection}

\vspace{10 pt}

The following theorem proves Marczinzik's conjecture for every gentle algebra. This yields an alternative homological characterisation of the Auslander-Gorenstein property.

\begin{theorem}

\label{thm::AGiffARbijwelldefGentle}

Let $A$ be a gentle algebra. Then the following are equivalent:

\begin{enumerate}[\rm (i)]
    \item $A$ is Auslander-Gorenstein.
    \item $A$ has a well-defined Auslander-Reiten map which is a bijection.
\end{enumerate}

\end{theorem}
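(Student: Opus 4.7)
The forward direction $(i) \Rightarrow (ii)$ is immediate from Theorem \ref{thm::preliminaries::ARbij}, since every Auslander-Gorenstein algebra carries a well-defined, bijective Auslander-Reiten map. For the converse, my plan is to reduce to the combinatorial criterion already established. By Corollary \ref{cor::1GoriffAG_gentle}, a gentle algebra is Auslander-Gorenstein if and only if it is $1$-Gorenstein, and by Theorem \ref{gentleCharacterisation} this is equivalent to the degree condition ``$\deg^{\text{in}}(v) = 2 \Leftrightarrow \deg^{\text{out}}(v) = 2$'' at every $v \in Q_0$. Thus it suffices to show, contrapositively, that if the degree condition fails at some vertex, then $\psi_A$ cannot be a well-defined bijection. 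The failure splits into two kinds: (A) some $v$ has $\deg^{\text{in}}(v) = 2$ and $\deg^{\text{out}}(v) \le 1$, or (B) some $v$ has $\deg^{\text{in}}(v) \le 1$ and $\deg^{\text{out}}(v) = 2$.

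In case (B), consider first the sub-case $\deg^{\text{in}}(v) = 0$: then $I(v) = S_v$ and $\Omega(I(v)) = \rad P(v) = M(p(b_1)) \oplus M(p(b_2))$ is a direct sum of two nonzero modules coming from the two outgoing arrows of $v$. By the string calculus for gentle algebras, these branches resolve independently in the minimal projective resolution, forcing the terminal term to be decomposable and contradicting the well-definedness of $\psi_A$. When $\deg^{\text{in}}(v) = 1$, the injective $I(v)$ is uniserial; an explicit computation of its minimal projective resolution, using the maximal critical path ending at $v$ in a manner dual to the one in (\ref{IInjRes}), shows that $\psi_A(I(v))$, if well-defined, coincides with $\psi_A(I(w))$ for some adjacent vertex $w$, breaking injectivity.

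In case (A), the injective $I(v)$ is a non-uniserial ``V-shaped'' string module with simple socle $S_v$ and two top branches determined by the incoming arrows $a_1, a_2$; its projective cover is $P(x_1) \oplus P(x_2)$ for the tips $x_j = s(i(a_j)a_j)$. I would compute the minimal projective resolution using the string calculus and split according to whether $P(x_1), P(x_2)$ are themselves uniserial and terminate simultaneously. Either the resolution acquires a decomposable terminal term, violating well-definedness, or $\Omega(I(v))$ is indecomposable and the resolution of $I(v)$ terminates at the same projective as that of an adjacent injective $I(y)$ (where $y$ lies on one of the left-maximal paths through $v$), yielding a collision $\psi_A(I(v)) = \psi_A(I(y))$ that breaks injectivity.

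The main obstacle I foresee is the string-calculus bookkeeping in the sub-case of (A) where $\Omega(I(v))$ happens to be indecomposable -- for instance, when $v$ is a sink with two incoming arrows whose left-maximal extensions are already right-maximal, the two branches of $P(x_1) \oplus P(x_2)$ collapse onto the shared socle of $I(v)$ and one gets $\Omega(I(v)) = S_v = P(v)$, so well-definedness is preserved. In such situations the contradiction must be found by iterated syzygy tracking of neighboring injectives and verifying that their images under $\psi_A$ collide with $\psi_A(I(v))$. This mirrors the resolution analysis in the proof of Theorem \ref{gentleCharacterisation}, and is naturally streamlined by the explicit combinatorial description of the Auslander-Reiten bijection stated later as Corollary \ref{cor::descriptionARbijGentle}, which reduces the verification of injectivity to a purely combinatorial statement on $Q$ and $I$.
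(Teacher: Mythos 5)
Your reduction to \Cref{gentleCharacterisation} and \Cref{cor::1GoriffAG_gentle} is the right opening move, and the forward implication is indeed immediate from \Cref{thm::preliminaries::ARbij}. However, your strategy for the converse diverges from the paper's in a way that opens up real gaps.

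The paper attacks \emph{surjectivity}: for each type of bad vertex $v$, it exhibits one specific indecomposable projective that cannot equal $\Omega^{\pdim I(x)}(I(x))$ for any indecomposable injective $I(x)$. When $\deg^{\text{out}}(v)=2\neq\deg^{\text{in}}(v)$, that projective is $P(v)$; when $\deg^{\text{in}}(v)=2\neq\deg^{\text{out}}(v)$, it is $P(s(i_1))$ for a suitable left-maximal path $i_1$. The key structural input is that proper indecomposable submodules of indecomposable projectives over a gentle algebra are \emph{uniserial}, together with the formula (\ref{eq::firstSyzygyGentle}) from \cite{GR05}. This handles all degrees $d\ge 1$ of a candidate resolution in a single sweep. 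You instead attack \emph{well-definedness} and \emph{injectivity}, which requires controlling entire resolutions of (possibly several) injectives rather than ruling out a single module from a single image.

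There are two concrete problems with your version. First, in case (B) with $\deg^{\text{in}}(v)=0$, you claim that because $\Omega(I(v))=M(p(b_1))\oplus M(p(b_2))$ is decomposable, the terminal term of the minimal projective resolution must be decomposable. This is false: if $\pdim M(p(b_1))<\pdim M(p(b_2))$, then $\Omega^{\pdim I(v)}(I(v))$ is the last nonzero syzygy of $M(p(b_2))$ alone, which can perfectly well be indecomposable, so well-definedness is not broken here. (Gentle algebras are always Iwanaga-Gorenstein, so both branches do terminate, but generically at different stages.) Second, the collision claims in case (B) with $\deg^{\text{in}}(v)=1$ and in case (A) are asserted but not carried out, and you acknowledge the latter as an obstacle. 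Appealing to \Cref{cor::descriptionARbijGentle} is circular here: that corollary describes $\psi_A$ only for Auslander-Gorenstein gentle algebras, which is precisely the hypothesis you are trying to establish. To salvage your route you would need to prove the collisions directly, which amounts to an iterated syzygy analysis at least as involved as the paper's single-target surjectivity argument.
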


Note that this theorem is also a consequence of \Cref{thm::AGiffARbijwelldef_Monomial}, which proves this result for every monomial algebra. However, in the special case of gentle algebras, there exists a much shorter proof, so we decided to include it here.  

\begin{proof}
By \Cref{thm::preliminaries::ARbij}, we only need to prove that (ii) implies (i). So assume (ii). We first show that if there is a $v$ with $\deg^{\text{out}}(v)=2\neq \deg^{\text{in}}(v)$ then $P(v)$ cannot be the last term in a minimal projective resolution of an injective module. For the sake of contradiction, assume that $\Omega^d(I(x))\cong P(v)$ for some vertex $x.$ Let
$$0\longrightarrow P(v)\cong P_d \xrightarrow[]{p_d}P_{d-1}\xrightarrow[]{p_{d-1}}P_{d-1}\ldots \longrightarrow P_0\longrightarrow I(x)\longrightarrow 0$$
denote a minimal projective resolution of this $I(x).$
The module $P(v)$ is not uniserial, so not injective, hence $d>0.$ If $d\ge 2,$ then $\Omega^{d-1}(I(x))$ has exactly one indecomposable non-projective direct summand, which according to \cite[3.3 Proposition]{GR05} is of the form $M(p(\alpha))$ for some arrow $\alpha.$ Thus, 

$$0\longrightarrow P(v)\cong P_d \xrightarrow[]{p_d\cong(0,\iota)} P_{d-1}\cong P'\oplus P(t(\alpha)) \xrightarrow{p_{d-1}\cong (id,\pi)}P'\oplus M(p(\alpha))\cong \Omega^{d-1}(I(x))\longrightarrow 0,$$
where $P'$ is projective. Since $\pi$ is a projective cover, $p_d(P_d)\subseteq \rad(P_{d-1}).$ In particular, $P(v)$ is (isomoprhic to) a proper submodule of $P(t(\alpha))$. However, in a gentle algebra indecomposable, proper submodules of indecomposable projectives are uniserial, as follows from our discussion in Section \ref{subsec::definitionsGentle}.
    So since $P(v)$ is not uniserial, $d=1$ must hold. We can use the following formula from the proof of \cite[3.3 Proposition]{GR05}:
    
    \begin{equation}
        \label{eq::firstSyzygyGentle}
        \Omega (I(x)) \cong P \oplus M_1 \oplus M_2
    \end{equation}
    where $P=P(x)$ if $I(x)$ is not uniserial and $P=0$ otherwise, and the $M_i$ are either 0 or of the form $M(i(\alpha))$ for some arrow $\alpha.$ So the non-uniserial module $P(v)$ can only be a direct summand of the first syzygy of $I(x)$ if $x=v$ and $I(x)$ is not uniserial. However, we chose $v$ such that $\deg^{\text{in}}(v)<2,$ so $I(v)$ is uniserial and thus, $d=1$ is not possible either. In conclusion, $P(v)$ is not in the image of $\psi_A,$ which contradicts (ii). 
    
    Now assume that there is a vertex $v$ with $\deg^{\text{in}}(v)=2\neq \deg^{\text{out}}(v).$ Let $i_1$ and $i_2$ be the two left-maximal paths ending at $v$. Note that they have length at least one. Since $A$ is gentle and there is at most one arrow starting at $v,$ at least one of the $i_j$-s is right-maximal. We may assume, $i_1$ is right-maximal. We will show that in this case $P(s(i_1))$ is not in the image of $\psi_A.$ 
    For this, note that  $P(s(i_1))$ is not a proper submodule of any indecomposable projective. This follows from the left-maximality of $i_1.$ So just as in the previous case, there is no vertex $x$ so that $\Omega^d(I(x))\cong P(s(i_1))$ for some $d\ge 2.$ 
    
    Every module of the form  $M(i(\alpha))$ where $\alpha$ is an arrow is a proper submodule of the indecomposable projective $P(s(\alpha))$. Therefore $P(s(i_1))$ is not of this form, so by (\ref{eq::firstSyzygyGentle}), $P(s(i_1))$ can only be a direct summand of $\Omega(I(x))$ if $x=s(i_1)$ and $I(x)$ is not uniserial. However, the left-maximality of $i_1$ implies that $\deg^{\text{in}}(s(i_1))<2$, in particular, $I(s(i_1))$ is uniserial. Thus, there is no $x$ with $\Omega(I(x))\cong P(s(i_1)).$
    
    Finally, $P(s(i_1))$ is not projective-injective as $S_v=S_{t(i_1)}$ is a direct summand of $\soc P(s(i_1))$. Here we use the right-maximality of $i_1.$ By choice, $I(v)$ is not uniserial, so $P(s(i_1))\ncong I(v)$. In particular, $P(s(i_1))$ is not projective-injective, and $P(v)$ is not in the image of $\psi_A$ in this case either. 
    
    In conclusion, (ii) implies that for every vertex $v$ $\deg^{\text{in}}(v)=2$ if and only if $\deg^{\text{out}}(v)=2$. By \Cref{gentleCharacterisation}, this is equivalent to (i).

\end{proof}

We can use the proof of \Cref{gentleCharacterisation} to describe the Auslander-Reiten bijection for Auslander-Gorenstein gentle algebras explicitly. In this description, we regard the underlying quiver and relations only as combinatorial objects, not considering their associated algebraic meaning. As previously discussed, it makes sense to think of this bijection as a permutation on the vertices of the quiver. 

\begin{corollary}

\label{cor::descriptionARbijGentle}
    Let $A=kQ/I$ be an Auslander-Gorenstein gentle algebra, and $\psi_A$ its Auslander-Reiten bijection. Let $v\in Q_0.$ We can describe $\psi_A$ explicitly as follows: 
    \begin{enumerate}[\rm (i)]
        \item If $\deg(v)=4$ or $0$ then $\psi_A(v)=v.$

\vspace{5 pt}
{

    \resizebox{0.6\linewidth}{!}{
    \input{pictures/ARbijGentle1}
    }
    

}

\item If $\deg^{\text{in}}(v)=1$ and there is no arrow $\beta$ such that $\alpha\beta \notin I$ where $\alpha$ is the unique arrow ending at $v$, then $\psi_A(v)=s(p)$ with $p$ being the unique left-maximal path ending at $v$.

\vspace{5 pt}
{

    \resizebox{0.8\linewidth}{!}{
    \input{pictures/ARbijGentle2}
    }
    

}

\item If $\deg^{\text{in}}(v)=0$ and $\deg^{\text{out}}(v)=1$, or if $\deg^{\text{in}}(v)=1=\deg^{\text{out}}(v)$ and $\alpha\beta\notin I$ for the unique arrow $\alpha$  ending at $v$ and the unique arrow $\beta$ starting at $v$, then $\psi_A(v)=t(w)$ where $w$ is the unique maximal critical path starting at $v.$

{

    \resizebox{0.8\linewidth}{!}{
    \input{pictures/ARbijGentle3}
    }
    

}

\end{enumerate}

\end{corollary}

\begin{proof}
If there are no arrows attached to $v,$ then $S(v)$ is projective-injective. If $v$ has degree $4$ then (\ref{VInjRes}) from the proof of \Cref{gentleCharacterisation} describes a minimal projective resolution of $I(v)$. In both cases, $\psi_A(v)=v.$

In case (ii), $I(v)\cong M(p),$ where $p$ is the unique left-maximal path ending at $v.$ According to \Cref{gentleCharacterisation} and  \Cref{lem::I(x)P(x)proj-inj}, this module is projective, so $I(v)\cong P(s(p))$. 

Finally, assume that $v$ satisfies the assumptions of (iii). Let $w=\beta_1 \beta_2 \ldots \beta_m.$ Then dual to ($\ref{IInjRes}$), the last term in a minimal projective resolution of $I(v)$ is isomorphic to $P(t(w))$.

\end{proof}

\begin{example}
    In this example, we draw the arrows of our quiver in two colours, either blue or orange. 
    A path of length two is contained in $I$ if and only if it contains two arrows of different colours. 
    Let $Q$ be as follows:
    
    {
    \centering
    \vspace{10pt}
    

    \resizebox{0.3\linewidth}{!}{
    \input{pictures/ARbijExampleGentle}
    }
    \par
    }
    \vspace{10 pt}
    
    This defines a gentle algebra $A$. 
    In this case, we can think of searching for a left-maximal path ending at a vertex (resp. a maximal critical path starting at a vertex), as looking for a maximal path containing arrows of only one colour ending at the given vertex (resp. a maximal path in which the arrows alternate between the two colours starting at the given vertex.)
    
Vertices 3, 5, and 6 belong to Case (i), 1, 2, and 9 to Case (ii), and 4, 7, and 8 to Case (iii). Applying \Cref{cor::descriptionARbijGentle} and using the above description, it is easy to see that $\psi_A=(142)(3)(5)(6)(789).$

\end{example}

\newpage

\section{Characterisation of 2-Gorenstein monomial algebras}

\label{chap::monomial}

In the previous chapter, we saw that gentle algebras accept a nice combinatorial characterisation of the Auslander-Gorenstein property. It is natural to ask whether we can extend this result to a broader class of algebras, namely, to monomial algebras. We call a quiver algebra $A=kQ/I$ \textcolor{blue}{{monomial}} if $I$ is generated by a set of paths, rather than by linear combinations of paths. By definition, every gentle algebra is monomial, but the class of monomial algebras is significantly more general. Notably, there are no restrictions on the quiver $Q$ in the monomial case.

While a full classification of the Auslander-Gorenstein property for monomial algebras will not be achieved in this chapter, we will classify $2$-Gorenstein monomial algebras. 
From this, we will see that being $2$-Gorenstein is already a very restrictive condition for a monomial algebra, in particular, any such algebra is a string algebra. This result is significant because indecomposable modules and irreducible morphisms over string algebras are well-understood, and their combinatorial structure makes them more accessible to study, as already mentioned in \Cref{chap::gentle}.
Another key observation is that $2$-Gorenstein monomial algebras lie, in a sense, very close to gentle or to Nakayama algebras. The main goal of this chapter is to prove the following result.

\begin{theorem}
\label{thm::classification2GorMonomial}
Let $A=kQ/I$ be a monomial algebra. Then $A$ is $2$-Gorenstein if and only if it satisfies the following conditions:
\begin{enumerate}[\rm (1)]
    \item $Q$ is biserial, i.e. for every vertex $x\in Q_0$ we have $\deg^{\text{out}}(x)\le 2$ and $\deg^{\text{in}}(x)\le 2.$
    \item For every vertex $x\in Q_0$ we have $\deg^{\text{out}}(x)=2$ if and only if $\deg^{\text{in}}(x)=2.$ 
    \item If $\deg(x)=4$ then we can label the incoming arrows at $x$ by $a_1$ and $a_2$ and the outgoing arrows from $x$ by $b_1$ and $b_2$, such that $a_1b_2,a_2b_1\in I,$ and $a_ib_i$ are not contained in any minimal relations for $i=1,2$.
  
{
\centering
\resizebox{0.15\linewidth}{!}{
    \input{pictures/degree4vertex}}
    
}

\item If an arrow is contained in a minimal relation, then this arrow must be the start or the end of a minimal relation.

{
\centering
\resizebox{0.3\linewidth}{!}{
    \input{pictures/startOrEndOfMinimalRel_picture1}}
    
}

\end{enumerate}
\end{theorem}

\begin{corollary}
\label{cor::2GormonomialImpliesString}
    Every $2$-Gorenstein (and hence, every Auslander-Gorenstein) monomial algebra is a string algebra.
\end{corollary}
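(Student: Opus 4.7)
The plan is to deduce this directly from the classification in Theorem~\ref{thm::classification2GorMonomial}. Recall that a monomial algebra $A=kQ/I$ is a string algebra precisely when $Q$ is biserial and, for each arrow $a$, there is at most one arrow $b$ with $s(b)=t(a)$ and $ab\notin I$, and at most one arrow $c$ with $t(c)=s(a)$ and $ca\notin I$. Biseriality is exactly condition~(1) of Theorem~\ref{thm::classification2GorMonomial}, so only the second (``unique continuation'') axiom remains to be verified.

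I would argue as follows. Fix an arrow $a\in Q_1$ and set $v=t(a)$. By biseriality, $\deg^{\mathrm{out}}(v)\le 2$, and if $\deg^{\mathrm{out}}(v)\le 1$ there is nothing to show. So assume $\deg^{\mathrm{out}}(v)=2$. Condition~(2) then forces $\deg^{\mathrm{in}}(v)=2$ as well, so $\deg(v)=4$, and condition~(3) supplies a labelling $a_1,a_2$ of the incoming and $b_1,b_2$ of the outgoing arrows such that $a_1b_2,a_2b_1\in I$ while $a_ib_i$ lies in no minimal relation for $i=1,2$. Without loss of generality $a=a_1$, and I claim $a_1b_1\notin I$: since $A$ is monomial, a path lies in $I$ iff it contains a minimal relation as a subpath; the only subpaths of $a_1b_1$ are $a_1$, $b_1$, $a_1b_1$ itself, none of which is in $I$ (the arrows by admissibility of $I$, and $a_1b_1$ by condition~(3)). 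Hence exactly one of $a_1b_1,a_1b_2$ fails to lie in $I$, giving the required unique continuation at $t(a)$. The dual argument, applied to an arrow $c$ entering some vertex $w=s(a)$ of out-degree (hence in-degree) two, yields the other half of the string axiom.

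The only slightly subtle point is the passage from ``$a_ib_i$ is not contained in any minimal relation'' to ``$a_ib_i\notin I$''; this is immediate in the monomial setting from the observation above, but it is important to spell it out, since in a general algebra the two notions can differ. With this in place, all three defining conditions of a string algebra hold, so every $2$-Gorenstein monomial algebra is a string algebra. The parenthetical statement about Auslander-Gorenstein algebras follows because every Auslander-Gorenstein algebra is, in particular, $2$-Gorenstein.
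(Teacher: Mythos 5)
Your proof is correct and matches the paper's approach: the paper simply recalls the definition of a string algebra from \cite{BR87} and leaves the verification against \Cref{thm::classification2GorMonomial} implicit, whereas you spell it out in full, correctly handling the passage from ``not contained in any minimal relation'' to ``not in $I$'' for length-two paths in a monomial algebra. (A minor aside: the string axiom at $t(a_1)$ only requires that \emph{at most} one of $a_1b_1,a_1b_2$ lies outside $I$, so $a_1b_2\in I$ from condition (3) already suffices; your extra argument that $a_1b_1\notin I$ is correct but not strictly needed.)
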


\begin{proof}
By definition \cite{BR87}, a string algebra is a biserial, monomial algebra $A=kQ/I$ such that 
\begin{enumerate}
    \item for any $a_1, a_2, b \in Q_1$ with $a_1\neq a_2$ and $t(a_1) = t(a_2) = s(b)$ we have
$|\{a_1b, a_2b \} \cap I| \ge 1$

\item for any $a, b_1, b_2 \in Q_1$ with $b_1\neq b_2$ and $t(a) = s(b_1) = s(b_2)$, we have
$|\{ab_1, ab_2 \} \cap I| \ge 1.$
\end{enumerate}
    
\end{proof}

\begin{example}
\label{ex::Section42GorMon}

Consider the following quiver $Q$
\vspace{10pt}

{\centering

    \resizebox{0.3\linewidth}{!}{
    \input{pictures/Section4Example2Gor}
    }
    
}
\vspace{10pt}
Let $A_1=kQ/\langle a_1b_2, a_2^2, b_2ca_1 \rangle$ and $A_2=kQ/\langle a_1b_2, a_2^2, ca_1 \rangle.$ Both of these define monomial algebras satisfying conditions (1), (2), and (3) from \Cref{thm::classification2GorMonomial}. For this, note that a loop at $v$ contributes $+1$ to $\deg^{\text{in}}(v)$, $+1$ to $\deg^{\text{out}}(v)$, and thus, $+2$ to $\deg(v)=\deg^{\text{in}}(v)+\deg^{\text{out}}(v).$ Condition (4) is also true for $A_2$; however, it does not hold for $A_1$ as $b_2ca_1$ is a minimal relation, but no minimal relation starts or ends with arrow $c$ in $A_1.$

The indecomposable projective and injective modules over $A_1$ can be depicted as follows:

\vspace{10pt}
{\centering

    \resizebox{\linewidth}{!}{
    \input{pictures/ExampleSection4A}
    }
}
\\
\vspace{10pt}
Then a minimal injective presentation of $P(2)$ is given by
$$0\longrightarrow P(2) \longrightarrow I(2) \longrightarrow I(1).$$ 
A projective cover $p$ of $I(1)$ fits into the short exact sequence
$$ 0\longrightarrow S_2\cong \Omega(I(1))\longrightarrow P(1) \xrightarrow{p} I(1)\longrightarrow 0,$$ and since $S_2$ is not projective, $\pdim I(1)\ge 2,$ which implies that $A_1$ is not 2-Gorenstein.

Consider now the modules over $A_2$.

\vspace{10pt}
{\centering

    \resizebox{\linewidth}{!}{
    \input{pictures/ExampleSection4B}
    }
}
\\
\vspace{10pt}
We should first observe that $P(2)\cong I(2)$ is projective-injective. Thus, the following minimal injective resolution of $P(v)$ is also a minimal projective resolution of $I(v)$:
$$ 0\longrightarrow P(v)\longrightarrow I(2)\oplus I(2) \longrightarrow I(v)\longrightarrow 0.$$
Finally, a minimal injective presentation of $P(1)$ is given by 
$$ 0\longrightarrow P(1)\longrightarrow I(2) \longrightarrow I(v).$$
By our previous observations, $\pdim I(2)=0$ and $\pdim I(v)=1.$ Thus, $A_2$ is 2-Goresntein, as expected from \Cref{thm::classification2GorMonomial}.

\end{example}

\begin{remark}
Note that conditions (1) and (3) also show up in the definition of gentle algebras, and (2) is the same condition as seen in the classification of Auslander-Gorenstein gentle algebras in \Cref{gentleCharacterisation}. So we can think of 2-Gorenstein monomial algebras as a collection of Nakayama algebras glued together at degree 4 vertices in a specific way, making the algebra look like a gentle algebra at any degree 4 vertex. This observation will be crucial for the next chapter, where we prove that one can reduce the classification problem of the Auslander-Gorenstein property for monomial algebras to that of Nakayama algebras.
\end{remark}

\begin{remark}
We would like to draw the reader's attention to the paper \cite{LZ21}. In that work, the authors classify when a monomial algebra is 1-Iwanaga-Gorenstein, in which case, all minimal relations must lie on oriented cycles. This phenomenon is somewhat similar to what we discover in the present paper; namely, that 2-Gorenstein monomial algebras can be constructed by gluing together Nakayama algebras.
\end{remark}

\begin{remark}
\Cref{thm::classification2GorMonomial} implies that a gentle algebra $A=kQ/I$ is $2$-Gorenstein if and only if they satisfy condition (2). This is a somewhat weaker result than Theorem \ref{gentleCharacterisation}, which proves that these properties are further equivalent to $A$ being Auslander-Gorenstein, and to $A$ being 1-Gorenstein. 
\end{remark}

\subsection{Background on monomial algebras}
\label{subsec::notationMonomial}
Before getting to work with monomial algebras, let us discuss some important facts and fix some notation. One important resource regarding monomial algebras is \cite{CB89}. In this paper, Crawley-Boevey explicitly describes a basis of the homomorphism spaces between any two tree modules. In the special case when our monomial algebra is a string algebra, tree modules are exactly the string modules. 
 
Another significant result about the homological properties of monomial algebras is due to Huisgen-Zimmermann, who proves that indecomposable direct summands of higher syzygies (not including first syzygies) are all of a simple form, which will play a central role in the proof of \Cref{thm::AGiffARbijwelldef_Monomial}.

\begin{lemma}\cite[Chapter 3. Theorem I]{ZH91} \label{lem::syzygiesInMonomialAlg}
    Let $A$ be a monomial algebra and $M$ an $A$-module. Then every indecomposable direct summand of the syzygy $\Omega^{r}(M)$ with $r\ge 2$ is isomorphic to a module of the form $pA$ where $p$ is a path of length at least one. 
\end{lemma}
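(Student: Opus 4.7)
The lemma is originally due to Huisgen-Zimmermann \cite{ZH91}, so my plan is to outline the strategy of the proof rather than propose a genuinely new argument.

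\smallskip

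\textbf{Reduction.} The first step is to reduce the claim about $\Omega^r(M)$ with $r\ge 2$ to a statement about first syzygies of submodules of projectives. Indeed, $\Omega^r(M) = \Omega(\Omega^{r-1}(M))$, and since $r-1\ge 1$, the module $\Omega^{r-1}(M)$ sits as a submodule of the projective $P_{r-1}$ in a minimal projective resolution of $M$. Hence the lemma reduces to the following claim: for every submodule $N$ of a projective $A$-module, every indecomposable direct summand of $\Omega(N)$ is isomorphic to $pA$ for some path $p$ of length at least one.

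\smallskip

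\textbf{Path-wise analysis of the kernel.} To prove this reduced claim, I would fix $N\subseteq P = \bigoplus_j e_{v_j}A$ and take a minimal projective cover $\pi : Q = \bigoplus_k e_{w_k}A \twoheadrightarrow N$, sending the canonical generator of the $k$-th summand to some $n_k \in N$. Since $A$ is monomial, both $P$ and $Q$ have canonical $k$-bases consisting of paths not in $I$. The essential use of monomiality is that $I$ is spanned, as a $k$-subspace of $kQ$, by a subset of the path basis; consequently, if $\sum_i c_i p_i \in I$ for distinct paths $p_i$, then each $c_i p_i$ must lie in $I$ individually. Applied to the vanishing condition $\sum_k n_k a_k = 0$ in $P$ defining $\ker\pi$, this should yield a generating set of $\ker\pi$ of the form $(0,\ldots,q_k,\ldots,0)$, where $q_k$ is a minimal path (in the sense of having no strict left subpath with the same property) starting at $w_k$ with $n_k q_k = 0$ in $P$. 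Each such element generates a submodule isomorphic to the path module $q_k A$, and minimality of $\pi$, which gives $\ker\pi \subseteq \rad Q$, forces $|q_k|\ge 1$.

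\smallskip

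\textbf{Main obstacle.} The principal difficulty is showing that $\ker\pi$ genuinely decomposes as a direct sum of these single-index submodules, rather than containing ``mixed'' generators involving several summands of $Q$ simultaneously. Overcoming this requires a careful combinatorial argument combining the path-wise vanishing principle with the linear independence of distinct paths in $kQ/I$: any apparent mixed relation $\sum_k n_k a_k = 0$ must, after expansion path-by-path in $P$, reduce to independent single-summand relations, so that the indecomposable summands of $\ker\pi$ are (up to isomorphism) among the path modules $q_k A$. This is essentially the strategy of \cite[Chapter 3, Theorem I]{ZH91}, and it genuinely fails in the non-monomial setting, where $I$ is not path-spanned and mixed generators can produce indecomposable summands of $\Omega(N)$ that are not of the form $pA$.
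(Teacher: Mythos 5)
First, note that the paper does not prove this lemma at all: it is imported verbatim from Huisgen-Zimmermann \cite{ZH91} and used as a black box, so there is no internal proof to compare your sketch against. Judged on its own terms, your reduction is sound --- $\Omega^r(M)=\Omega(\Omega^{r-1}(M))$ with $\Omega^{r-1}(M)$ a submodule of a projective for $r\ge 2$, and the positivity of the path lengths does come from minimality of the cover --- and your ``path-wise vanishing'' principle for monomial ideals is correct as stated. But the step you yourself flag as the main obstacle is exactly where the argument breaks, and the mechanism you propose to overcome it does not work.

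Concretely, the kernel of the projective cover $\pi:\bigoplus_k e_{w_k}A\twoheadrightarrow N$ is \emph{not} generated by single-index elements $(0,\dots,q_k,\dots,0)$, and the path-wise vanishing principle does not force the relation $\sum_k n_ka_k=0$ to split coordinate-by-coordinate: distinct summands can contribute the \emph{same} path of $P$ with cancelling coefficients. This paper itself exhibits precisely such mixed generators --- in \Cref{lem::descriptionFirstSyzygyMonomial} the first syzygy of $I(x)$ is generated by elements $r_{t,p}-r_{p,t}$ living in two different summands of $P_0$, and in the proof of \Cref{pdim1Lemma} a genuinely mixed element $(\Tilde{q}qz_1,-qz_1)$ of a \emph{second} syzygy appears. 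The content of Huisgen-Zimmermann's theorem is that such a mixed element nevertheless generates a module \emph{abstractly isomorphic} to a path module (here because $\operatorname{r.ann}(\Tilde{q}qz_1)\supseteq\operatorname{r.ann}(qz_1)$, so the cyclic module it generates is $\cong qz_1A$), and that the whole kernel decomposes into such cyclics after a nontrivial change of generators --- not along the coordinates of the cover. So the decomposition you assert in your ``main obstacle'' paragraph is false as stated, and the actual proof requires a genuinely different combinatorial argument (tracking how the annihilators of the coordinates of each kernel generator are nested); asserting that mixed relations ``reduce to independent single-summand relations'' is a restatement of the theorem rather than a proof of it. As an outline of where the difficulty lies your proposal is honest, but as a proof it has a real gap at its central step.
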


Since we are interested in $n$-Gorenstein properties of monomial algebras, we first need to understand first syzygies of indecomposable injectives (or dually, first cosyzygies of indecomposable projectives), which are not covered by the previous lemma. We start with an explicit description of the indecomposable projectives and injectives.

Let $x$ be a vertex in the monomial algebra $A=kQ.$
Let $\textcolor{blue}{B}$ be the set of paths $p$ in $Q$ such that $t(p)=x$ and $p\notin I$. Then a $k$-basis of $I(x)$ can be given by $\{p'\text{ }|\text{ }p\in B\}$ where the right $A$-module action is described as follows: For every $i\in Q_0, a \in Q_1$ and $p,q\in B$ we have $p'\cdot e_i=p'$ if $s(p)=i$ and $p'\cdot e_i=0$ otherwise, and $p'\cdot a=q'$ if $p=aq$ and $p'\cdot a=0$ otherwise. This determines $I(x)\cong D(e_xA).$ So $I(x)$ can be depicted as a rooted tree, as shown in \Cref{fig:injectiveMonomial_picture}. Let $\textcolor{blue}{B_{\text{max}}}\subset B$ be the subset of left-maximal paths in $B.$ Then $\top I(x)\cong \bigoplus_{p\in B_{\text{max}}} S_{s(p)}.$

Dually, since $P(x)=e_xA,$ a $k$-basis of $P(x)$ is given by $\textcolor{blue}{C}$, the set of paths starting at $x$ not contained in $I.$ The right $A$-module action is given by composition, which is also illustrated in \Cref{fig:injectiveMonomial_picture}. Let $\textcolor{blue}{C_{\text{max}}}\subset C$ be the subset of right-maximal paths in $C.$ Thus, $\soc P(x) \cong \bigoplus_{p\in C_{\text{max}}} S_{s(p)}.$

    \vspace{10 pt}
    {\centering
\begin{figure}[h]
    \centering

    \resizebox{0.7\linewidth}{!}{
    \input{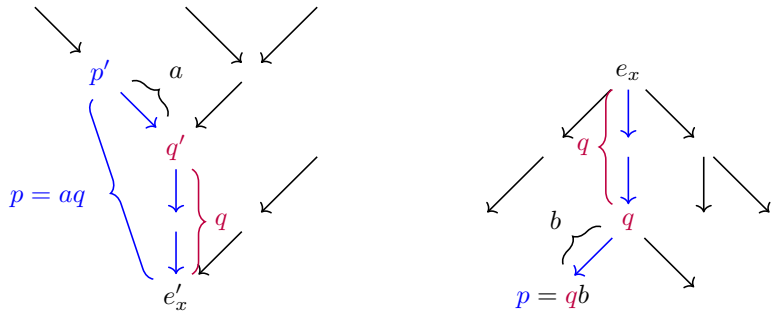}
    }
    
    \caption{The module structure of $I(x)$ (left) and $P(x)$ (right)}
    \label{fig:injectiveMonomial_picture}
\end{figure}

}
Hence, every projective-injective module is uniserial. 
For the remainder of this section, let us assume that $A$ is 1-Gorenstein. Consider a minimal projective presentation of $I(x):$ $$P_1\xrightarrow[]{d_1} P_0 \xrightarrow{d_0} I(x)\to 0.$$
We have $P_0\cong \bigoplus_{p\in B_{\text{max}}} P({s(p)}).$ If $A$ is $1$-Gorenstein, every $P({s(p)})$ for $p\in B_{\text{max}}$ is projective-injective, hence uniserial, and corresponds to the unique right-maximal path starting at $s(p).$ We can then describe the map $d_0$ on the direct summands of $P_0$ as $$d_0: P(s(p))\to I(x),\text{ } q\mapsto p'\cdot q.$$ This is illustrated in \Cref{fig:projCoverI(x)}.

   \vspace{10 pt}
    {\centering
\begin{figure}[h]
    \centering

    \resizebox{0.7\linewidth}{!}{
    \input{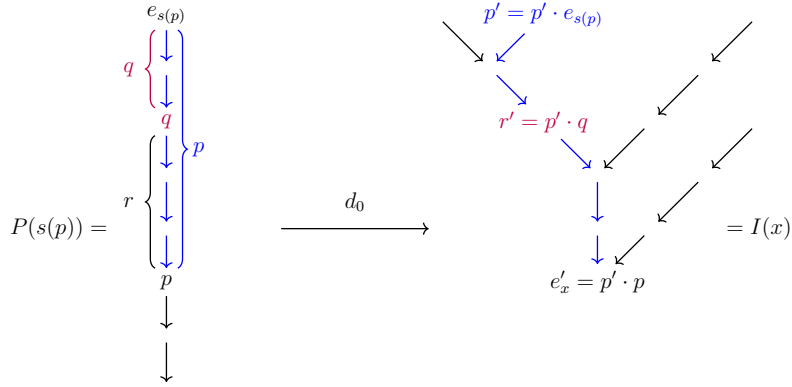}
    }
    
    \caption{The projective cover of $I(x)$}
    \label{fig:projCoverI(x)}
\end{figure}

}

For any two different paths $p,t\in B_{\max}$ let \textcolor{blue}{$r_{p,t}$} be the shortest left subpath of $p$ such that there is a path $s$ with $p=r_{p,t}s$ and $t=r_{t,p}s.$ Let us view $r_{p,t}$ as an element of the direct summand $P(s(p))$ of $P_0$. Then we get $r_{t,p}-r_{p,t}\in \ker(d_0),$ since $d_0(r_{p,t})=d_0(r_{t,p})=s'.$ Using that $A$ is a monomial algebra and that the $P(s(p))$ are uniserial, one can show the following statement, which is needed for the description of $\Omega(I(x))$:

\begin{lemma}\label{lem::descriptionFirstSyzygyMonomial}
    Let $A$ be a 1-Gorenstein monomial algebra. Then using the above notation, 
    
    $$\mathcal{S}=\{r_{t,p}-r_{p,t}\text{ }|\text{ }p,t\in B_{\max}\}\cup \{pa\in P(s(p))\text{ }|\text{ }p\in B_{\text{max}}, a\in Q_1, pa\notin I \}$$
    generates $\Omega(I(x))=\ker(d_0)$ as an $A$-module.
\end{lemma}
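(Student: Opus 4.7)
The plan is to exploit the uniserial structure of each summand $P(s(p))$ of $P_0 = \bigoplus_{p \in B_{\text{max}}} P(s(p))$ (granted by the 1-Gorenstein assumption) together with the explicit description of $d_0$, in order to decompose any element of $\ker(d_0)$ into pieces coming from the two families making up $\mathcal{S}$. Throughout I will use the $k$-basis of $P_0$ consisting of pairs $(p, q)$ with $p \in B_{\text{max}}$ and $q$ a path starting at $s(p)$ not in $I$, viewing $q$ as an element of the summand $P(s(p))$.

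Since $P(s(p))$ is uniserial --- identified with $M(\pi_p)$ for the unique right-maximal path $\pi_p$ starting at $s(p)$ --- and since $p$ itself is one such path, I can compare the lengths of $p$ and $q$: either (i) $\mathrm{length}(q) > \mathrm{length}(p)$, in which case $p$ is a strict left subpath of $q$, so $q = pa\tilde{q}$ for some arrow $a$ with $pa \notin I$, and $(p, q) = (p, pa) \cdot \tilde{q}$ already lies in the submodule generated by the second family of $\mathcal{S}$; or (ii) $\mathrm{length}(q) \le \mathrm{length}(p)$, forcing $q$ to be a left subpath of $p$, so $p = q s_q$ and $d_0(p, q) = s_q'$. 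After subtracting an element of the submodule generated by the second family, I may assume that every basis vector appearing in our kernel element $k$ falls into case (ii).

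The condition $d_0(k) = 0$ then becomes, upon grouping terms by the image $s' \in I(x)$, the family of scalar constraints $\sum_{p \in T_s} c_{p, q_p} = 0$ for every path $s \notin I$ ending at $x$, where $T_s := \{p \in B_{\text{max}} : p = q_p s \text{ for some path } q_p\}$. For each such $s$ I fix a reference element $p_0 \in T_s$ and use the sum-zero condition to rewrite the contribution of the group as $\sum_{p \neq p_0} c_{p, q_p}\bigl((p, q_p) - (p_0, q_{p_0})\bigr)$. The longest common suffix $u$ of $p$ and $p_0$ has $s$ as a right subpath, so I can write $u = s'' s$; unwinding the definitions of $r_{p, p_0}$ and $r_{p_0, p}$ then yields
\[(p, q_p) - (p_0, q_{p_0}) = (r_{p, p_0} - r_{p_0, p}) \cdot s'',\]
which sits in the submodule generated by the first family of $\mathcal{S}$. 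Adding up completes the proof.

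The main subtlety I foresee is keeping the case analysis exhaustive and ensuring each reduction step stays inside $\ker(d_0)$. Both points rest on the uniserial property of each $P(s(p))$ for $p \in B_{\text{max}}$ (so $p$ and $q$ are always comparable left subpaths of $\pi_p$) and on the monomial property of $A$ (so that subpaths and one-arrow extensions of paths outside $I$ remain outside $I$, making expressions like $q = pa\tilde{q}$ well-defined). As a quick consistency check, both families do lie in $\ker(d_0)$: one has $d_0(pa) = (e_x' \cdot p) \cdot a = e_x' \cdot a = 0$ for any arrow $a$, and $d_0(r_{t,p} - r_{p,t}) = u_{p,t}' - u_{p,t}' = 0$ since the longest common suffix $u_{p,t}$ is symmetric in its arguments.
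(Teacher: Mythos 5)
Your proof is correct and follows essentially the same route as the paper: subtract off second-family generators to reduce to the case where each $q$ is a left subpath of $p$, group the remaining terms by their image in $I(x)$, and use the resulting sum-zero constraints to rewrite each group as a combination of differences that are right multiples of first-family generators. The only deviation is that you use a star decomposition with a fixed reference $p_0$ (substituting $c_{p_0} = -\sum_{p \neq p_0} c_p$), whereas the paper uses a chain decomposition (Abel summation with partial-sum coefficients between consecutive $p_i$) --- a purely cosmetic difference.
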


\begin{proof}
We already argued that $r_{t,p}-r_{p,t}\in \ker(d_0)$, and the elements of the second set are clearly sent to $0$ by $d_0$.

Let us denote the maximal paths by $B_{\max}=\{p_i \ |\ i\in J \}$ for some (finite) index set $J.$ Then $P_0=\bigoplus_{i\in J} P(s(p_i)).$ Take any element $w\in P_0$ such that $d_0(w)=0.$ This $w$ is a linear combination of non-zero paths starting at vertices of the form $s(p_i)$ where $i\in J.$ As $d_0$ is a linear map, without loss of generality, we may omit the summands corresponding to a path $q$ which contains a $p_i$ as a proper left subpath, in other words, when there is an arrow $a$ such that $p_ia$ is a left subpath of $q.$ In this case $q$ is obviously in the $A$-module generated by $\mathcal{S}$ and $d_0(q)=0.$ 

Hence, we can write $w$ as a finite sum $$w=\Sigma_{i\in J,r\in B}c_i^r q_i^r=:\Sigma_{r\in B}w_r$$ where for a tuple $(i,r)\in J\times B, c_{i}^r\in k$ and $q_i^r$ is either zero or a path starting at $s(p_i)$ such that $p_i=q_i^r r.$ We define $w_r=\Sigma_{i\in J}c_i^rq_i^r.$ If $q_i^r\neq 0,$ $q_i^r\in P(s(p_i))$ and $d_0(q_i^r)=p_i'\cdot q_i^r=r'.$ 
So we have $$d_0(w)=\Sigma_{i,r}c_i^r (p_i'\cdot q_i^r)=\Sigma_{r\in B}(\Sigma_{i\text{: r is a right subpath of }p_i}c_i^r) r'=0.$$
    Consider a fixed $r\in B$. Then by the previous formula, 
    \begin{equation}
        \label{eq::coeffSumToZero}
        \Sigma_{i\text{: r is a right subpath of }p_i}c_i^r=0.
    \end{equation}
    Let us renumber the elements of $B_{\max}$ such that $p_1,p_2,\ldots,p_n$ are exactly the elements in $B_{\max}$ which contain $r$ as a right subpath.
    Note that for each $i\in\{1,\ldots,n\}$ there is a path $h_i$ such that $q_i^r=r_{p_i,p_{i+1}}h_i$ by definition of $r_{p_i,p_{i+1}}$ and since $q_i^rr=p_i$ and $q_{i+1}^rr=p_{i+1}.$ Note that this implies $q_{i+1}^r=r_{p_{i+1},p_{i}}h_i$ as well. Here we set $n+1:=1.$  Then we can then write  
    $$w_r=\Sigma_{i=1}^n c_i^rq_i^r = c_1^r(q_1^r-q_2^r)+(c_1^r+c_2^r)(q_2^r-q_3^r)+\ldots + (c_1^r+\ldots +c_{n-1}^r)(q_{n-1}^r-q_n^r) =$$
    $$=c_1^r(r_{p_1,p_2}-r_{p_2,p_1})h_1+(c_1^r+c_2^r)(r_{p_2,p_3}-r_{p_3,p_2})h_2+\ldots+(c_1^r+\ldots +c_{n-1}^r)(r_{p_{n-1},p_n}-r_{p_n,p_{n-1}})h_{n-1}.$$
    The second equality follows from (\ref{eq::coeffSumToZero}), i.e. $c_1^r+\ldots+c_n^r=0$. Thus, we get that $w_r$ is contained in the $A$-module generated by $\mathcal{S}.$ This concludes the proof. 
    
\end{proof}

\subsection{Proof of Theorem \ref{thm::classification2GorMonomial}}

We devote the remainder of this chapter to proving our classification theorem. The arguments that follow will be quite technical and will rely heavily on the combinatorial structure of monomial algebras, in particular, the tree module structure of projective and injective modules, and the left-right symmetry of the $n$-Gorenstein property. 
We break up the proof into smaller steps. We will be using the notation introduced in \Cref{subsec::notationMonomial} throughout this chapter. The following statement will be useful in our later proofs.

\begin{lemma}
\label{lem::1GorMonomial: dimsocP(x)=dimtopI(x)}
    Let $A$ be a 1-Gorenstein monomial algebra. Then, for every vertex $x$ we have $$\dim(\soc P(x))= \dim(\top I(x)).$$ 
    
    Moreover, let $C_{\max}=\{z_1,\ldots,z_n\}$ be the right-maximal paths starting at $x.$ For each $i$ there is a path $p_i$ ending at $x$ such that  $w_i=p_iz_i$ is a right-and left-maximal path corresponding to the projective-injective module $M(w_i)\cong I(t(z_i))\cong P(s(p_i)).$ We have $p_iz_j\in I$ for every $i\neq j.$  
\end{lemma}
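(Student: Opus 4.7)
The plan is to first construct $p_i$ and identify $w_i$ via the $1$-Gorenstein condition applied to the socle of $P(x)$, then rule out $p_iz_j\notin I$ using the uniserial structure of $P(s(p_i))$, and finally derive the dimension equality by setting up a bijection between $C_{\max}$ and $B_{\max}$ via maximal paths through $x$.

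For the construction, I would fix $z_i\in C_{\max}$ and proceed as follows. Since $S_{t(z_i)}$ is a summand of $\soc P(x)$, the module $I(t(z_i))$ is a direct summand of the injective envelope of $P(x)$. The $1$-Gorenstein assumption forces this envelope to be projective, so $I(t(z_i))$ is projective-injective. By the facts recalled in \Cref{subsec::notationMonomial}, every projective-injective module over a monomial algebra is uniserial, so $I(t(z_i))\cong M(w_i)\cong P(s(w_i))$ for a unique left-and-right-maximal path $w_i$ ending at $t(z_i)$. Because the basis of the uniserial $I(t(z_i))$ is totally ordered by the right-subpath relation and $z_i\notin I$ ends at $t(z_i)$, this yields $w_i=p_iz_i$ for a unique path $p_i$ ending at $x$ with $s(p_i)=s(w_i)$.

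For the relations $p_iz_j\in I$, I would exploit that $P(s(p_i))\cong M(w_i)$ is uniserial, so every path starting at $s(p_i)$ that is not in $I$ is a left subpath of $w_i=p_iz_i$. If $p_iz_j\notin I$ for some $j\neq i$, then $p_iz_j$ would be a left subpath of $p_iz_i$; cancelling $p_i$ on the left would force $z_j$ to be a left subpath of $z_i$. But $z_j$ is right-maximal, so any nontrivial right extension of $z_j$ lies in $I$, forcing $z_j=z_i$ and contradicting $i\neq j$.

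Finally, recalling that $\dim\soc P(x)=|C_{\max}|$ and $\dim\top I(x)=|B_{\max}|$, it will suffice to prove $|C_{\max}|=|B_{\max}|$. For this, I would introduce the set $\mathcal{W}(x)$ of pairs $(w,v)$, where $w$ is a left-and-right-maximal path with $M(w)$ projective-injective and $v$ is a marked occurrence of $x$ along $w$. The assignment $z_i\mapsto (w_i,\text{ start of }z_i\text{ on }w_i)$ would give a bijection $C_{\max}\to\mathcal{W}(x)$: injectivity is clear, and surjectivity follows because splitting any such $w$ at a marked $x$ produces a right-maximal path $z\in C_{\max}$ whose associated $w_z$ agrees with $w$ by uniqueness of the maximal path in the uniserial $I(t(z))$. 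Applying the same construction to $A^{\mathrm{op}}$, which is also $1$-Gorenstein by \Cref{thm::preliminaries:left-right-symmetry}, yields a bijection $B_{\max}\to\mathcal{W}(x)$. The main subtlety will be in tracking the precise occurrence of $x$ on each maximal path, since when $Q$ has cycles a given $w$ may pass through $x$ several times.
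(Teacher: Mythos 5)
Your construction of $w_i$ and $p_i$ from the $1$-Gorenstein condition is correct, and your argument for $p_iz_j\in I$ (cancel $p_i$ on the left inside the uniserial $P(s(p_i))$ and invoke right-maximality of $z_j$) is a clean variant of what the paper does. The problem is in the final counting step: the surjectivity of your map $C_{\max}\to\mathcal{W}(x)$ is false. It is not true that splitting a right-maximal path $w=pz$ at an occurrence of $x$ produces a right-maximal $z$. The point you overlook is that $z$ can be extendable by an arrow $a$ with $za\notin I$ even though $pza\in I$, because the obstructing minimal relation starts inside $p$. A concrete $1$-Gorenstein counterexample: take the linear Nakayama algebra $Q\colon 1\xrightarrow{\alpha}2\xrightarrow{\beta}3\xrightarrow{\gamma}4$ with $I=(\alpha\beta\gamma)$, and $x=3$. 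Here $C_{\max}=\{\gamma\}$, $B_{\max}=\{\alpha\beta\}$, so both cardinalities are $1$, but the maximal paths $\alpha\beta$ and $\beta\gamma$ both carry $M(w)$ projective-injective and both pass through $3$, so $|\mathcal{W}(3)|=2$. Splitting $\alpha\beta$ at its endpoint $3$ gives $z=e_3$, which is not right-maximal; that pair has no preimage. The dual map $B_{\max}\to\mathcal{W}(3)$ misses $(\beta\gamma,\mathrm{inner})$ for the dual reason, so neither map is onto $\mathcal{W}(x)$ and the equality of cardinalities does not follow.

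The paper's route avoids $\mathcal{W}(x)$ entirely: after establishing that the $p_i$ are pairwise not right subpaths of one another, it picks for each $i$ a left-maximal path $p_i'\in B_{\max}$ of which $p_i$ is a right subpath; distinctness of the $p_i'$ follows because if $p_i'=p_j'$ then one of $p_i,p_j$ would be a right subpath of the other. This gives an injection $C_{\max}\hookrightarrow B_{\max}$, and the reverse injection comes from applying the same argument in $A^{\mathrm{op}}$, yielding $|C_{\max}|=|B_{\max}|$. To repair your proof you would need to replace the ``marked maximal path'' invariant by this direct pair of injections, or else restrict $\mathcal{W}(x)$ to occurrences whose suffix is right-maximal \emph{and} whose prefix is left-maximal simultaneously — but showing that restricted set is in bijection with both $C_{\max}$ and $B_{\max}$ is essentially the statement you are trying to prove.
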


\begin{proof}

Recall that $\soc P(x)\cong \bigoplus_{z\in C_{\max}} S_{t(z)}.$ So the 1-Gorenstein property of $A$ ensures that $I(t(z_i))$ is projective-injective for every $z_i\in C_{\max},$ in particular, there exists a path $p_i$ ending at $x$ so that $w_i=p_iz_i$ is left- and right-maximal and $I(t(z_j))\cong M(w_j) \cong P(s(w_j))$. Note that we do not claim that $p_i$ is left-maximal itself, but in any case, it is a right subpath of a left-maximal path $p_i'\in B_{\max}$ ending at $x$.

Observe that $p_i$ is not a right subpath of $p_j$ if $j\neq i.$ If it were, $p_jz_j\notin I$ would imply $p_iz_j\notin I$ either. So $w_i=p_iz_i$ and $p_iz_j$ were two non-zero paths starting at $s(w_i)$, which in addition, are both different and right-maximal as $z_i\neq z_j\in C_{\max}.$ This meant that the socle of $P(s(w_i))$ is at least two-dimensional, which however, contradicts the projective-injectivity of $P(s(w_i))$. Thus, $p_i$ is not a right subpath of $p_j$ if $j\neq i.$

This immediately implies that $p_iz_j\in I$ as $p_jz_j$ is the unique left-maximal path ending at $t(z_j).$ Furthermore, it is also clear from this that $p_i'\neq p_j'$ as they end with different paths. This implies that $\dim(\top I(x))=|B_{\max}| \ge |C_{\max}|=\dim (\soc P(x)).$ A dual argument works for the direction $|B_{\max}| \le |C_{\max}|.$

\end{proof}

We know that if $A$ is a monomial algebra, then the projective-injective modules are uniserial. To understand the $2$-Gorenstein property, we would like to have a description of how injective modules with projective dimension one look.

\begin{lemma}
\label{pdim1Lemma}
    Let $A=kQ/I$ be a $1$-Gorenstein monomial algebra. Let $I(x)$ be the indecomposable injective module at vertex $x$ and assume that $I(x)$ has projective dimension $1.$ Then the following hold:
    \begin{enumerate}[\rm (i)]
        \item $\dim(\top I(x))\le 2.$ In other words, there are at most two different left-maximal paths ending at $x.$
        \item Assume that $p_1\neq p_2$ are two left-maximal paths ending at $x.$ Then there is no arrow $a$ such that $p_1a\notin I$ and $p_2a\notin I$ hold at the same time.
        
\vspace{10 pt}
{\centering
    \centering

    \resizebox{0.6\linewidth}{!}{
    \input{pictures/pdim1InjMonomialAlg_part2}
    }
    

}
        
\end{enumerate}

Since being 1-Gorenstein is left-right symmetric, the dual statements for describing indecomposable projective modules with injective dimension one also hold.

\end{lemma}
  
\begin{proof}
We will show part (i) by explicitly describing a minimal projective presentation and the first syzygy module of $I(x).$ For this, recall that we use the notation 
$$P_1\xrightarrow[]{d_1} P_0 \xrightarrow{d_0} I(x)\to 0$$ for a minimal projective presentation of $I(x)$ and that we already described $d_0$ in Section \ref{subsec::notationMonomial}. 
Assume that $|B_{\text{max}}|\ge 3,$ which is equivalent to $\dim(\top I(x))\ge3.$ Then there exists a path in $Q$, call it $q$, such that $p_1=q_1q$, $p_2=q_2q$ and $p_3=q_3q$ for three different paths $q_1,q_2$ and $q_3$ with $p_1,p_2,p_3\in B_{\text{max}}$. Choose $q$ such that it is maximal with this property, i.e. such that there is no $a\in Q_1$ such that this statement also holds for $aq.$
Note that $q=e_x$ is possible. 
There are two different cases to consider. First, assume that $q_1-q_2$ is not in $\mathcal{S}$.
Here $\mathcal{S}$ is the set introduced in \Cref{lem::descriptionFirstSyzygyMonomial}. 
This means that there exists a path $\Tilde{q}$ of length at least one such that $q_1=r_{p_1,p_2}\Tilde{q}$ and $q_2=r_{p_2,p_1}\Tilde{q}.$
We should have the following picture in mind:
   
   \vspace{10pt}

 {


\centering
\resizebox{0.6\linewidth}{!}{
    \input{pictures/3dimtopIx_case1}}
    

}

\vspace{10pt}
   
    By the maximality of $q$, we have $q_1=r_{p_1,p_3}, q_2=r_{p_2,p_3}$ and $q_3=r_{p_3,p_1}=r_{p_3,p_2}.$ So $q_1-q_3, q_2-q_3\in \mathcal{S}.$ A further consequence of this maximality is that $r_{p_1,p_2}-r_{p_2,p_1}, q_1-q_3\notin \rad(\ker(d_0)).$ This can be seen with the help of \Cref{lem::descriptionFirstSyzygyMonomial}. As a consequence,  $S_{t(r_{p_1,p_2})}\oplus S_{t(q_1)}$ is (isomorphic to a) direct summand of $\top(\ker(d_0))$, $P(t(r_{p_1,p_2}))\oplus P(t(q_1))$ is a direct summand of $P_1$, and $d_1$ can be defined to act on it as 
   
   $$d_1: P(t(r_{p_1,p_2}))\oplus P(t(q_1)) \to P(s(p_1))\oplus P(s(p_2))\oplus P(s(p_3)),\text{ }(u,v)\mapsto (r_{p_1,p_2}u+q_1v,-r_{p_2,p_1}u,-q_3v).$$
   We can visualise $d_1$ (restricted to the aforementioned direct summands of $P_1$) as follows:

\vspace{10pt}
 {


\centering
\resizebox{0.8\linewidth}{!}{
    \input{pictures/d1_-,0_}}
    

}

\vspace{15 pt}
   {


\centering
\resizebox{0.8\linewidth}{!}{
    \input{pictures/d1_0,-_}}
    

}

\vspace{10pt}

   For $i=1,2,3$ let $z_i$ be the path completing $p_i$ to the unique right-maximal path starting at $s(p_i),$ $w_i=p_iz_i.$ The dual of the statement of \Cref{lem::1GorMonomial: dimsocP(x)=dimtopI(x)} implies $p_iz_j=0$ if $i\neq j.$ Therefore, $d_1(0,qz_1)=(w_1,0,0)=d_1(\Tilde{q}qz_1,0).$ Hence, $0\neq (\Tilde{q}qz_1,-qz_1)\in \ker(d_1).$ This, however, contradicts $\pdim I(x)=1.$

   It remains to consider the case when $q_1-q_2,q_2-q_3,q_1-q_3\in \mathcal{S}.$ In this case, $I(x)$ can be depicted as follows:

\vspace{10pt}
   {


\centering
\resizebox{0.7\linewidth}{!}{
    \input{pictures/3dimtopIx_case2}}
    

}

\vspace{10pt}

The same argument as before works here, if we replace $r_{p_1,p_2}$ by $q_1,$ $r_{p_2,p_1}$ by $q_2,$ and set $\Tilde{q}=e_{s(q)}.$ Hence, we must arrive at a contradiction in this case as well. This concludes the proof of part (i).

Consider part (ii). Let $\pdim I(x)=1$ and $B_{\max}=\{p_1,p_2\}$ with $p_1\neq p_2.$ 
Let $s_i=s(p_i)$ for $i=1,2$. Then $P_0\cong P(s_1)\oplus P(s_2)$ and because of its projective-injectivity, $P(s_i)$ corresponds to the unique right-maximal path $w_i$ starting at $x_i.$ Note that there are paths $z_i$ such that $w_i=p_iz_i$. Moreover, $p_1-p_2\in \ker(d_0).$ 
Let $p$ be the path such that $p_1=r_{p_1,p_2}p$ and $p_2=r_{p_2,p_1}p$. 
For a visualisation of the projective cover $P_0 \xrightarrow[]{d_0} I(x)$, we refer to \Cref{fig:d_0}.

{

\begin{figure}[h]

\centering
\resizebox{0.5\linewidth}{!}{
    \input{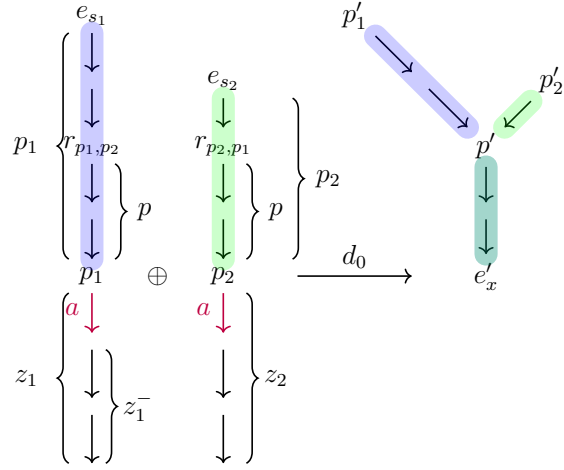}}
    
    \caption{Projective cover of $I(x)$}
    \label{fig:d_0}
\end{figure}
} 
If $w_i=p_iaz_i^-$ for both $i=1$ and $i=2$ for some $a\in Q_1$ then $\ker(d_0)$ is generated as a right $A$-module by the $A$-linearly independent paths $r_{p_1,p_2}-r_{p_2,p_1}$ and $p_1a$. This can be deduced from \Cref{lem::descriptionFirstSyzygyMonomial}. 
Thus, $\top(\ker(d_0))\cong S_{t(r_{p_1,p_2})}\oplus S_{t(a)}$, in particular, the top is not simple.  However, $\ker(d_0)$ is indecomposable, as $0\neq (r_{p_1,p_2}-r_{p_2,p_1})\cdot pz_1=w_1=(p_1a)\cdot z_1^-\in \ker(d_0).$ Here, the first equality is a consequence of $p_iz_j\in I$ for $i\neq j$, which is the dual statement of \Cref{lem::1GorMonomial: dimsocP(x)=dimtopI(x)}. This implies that $\ker(d_0)$ is not projective, contradicting $\pdim I(x)=1.$ 
So there cannot exist such an arrow $a$.

\end{proof}

We use the previous lemma to show that $Q$ must be biserial if $A$ is $2$-Gorenstein. 

\begin{lemma}
\label{biserialLemma}
    Let $A=kQ/I$ be monomial and $2$-Gorenstein. Then $\dim(\soc(P(x)))\le 2$ and $\dim(\top(I(x)))\le 2$ for every vertex $x.$  These immediately imply that $Q$ is biserial.
\end{lemma}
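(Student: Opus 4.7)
The plan is to prove only the inequality $\dim\top I(x)\le 2$ for every vertex $x$; the dual bound $\dim\soc P(x)\le 2$ then follows by applying the same statement to $A^{\text{op}}$, which is again a $2$-Gorenstein monomial algebra by \Cref{thm::preliminaries:left-right-symmetry}. The biseriality of $Q$ is immediate once both inequalities hold: each arrow ending at a vertex $v$ is the last arrow of a distinct left-maximal path ending at $v$, so $\deg^{\text{in}}(v)\le\dim\top I(v)\le 2$, and the dual counting gives $\deg^{\text{out}}(v)\le 2$.

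Suppose for a contradiction that some vertex $x$ admits three distinct left-maximal paths $p_1,p_2,p_3\in B_{\max}$ ending at $x$. Since $I(x)$ is not uniserial it is not projective, so $\pdim I(x)\ge 1$, and \Cref{pdim1Lemma}(i) rules out $\pdim I(x)=1$, forcing $\pdim I(x)\ge 2$. In the minimal projective resolution
\[
\cdots\longrightarrow P_1\xrightarrow{d_1} P_0\xrightarrow{d_0} I(x)\longrightarrow 0,
\]
$1$-Gorensteinness makes $P_0=\bigoplus_{p\in B_{\max}}P(s(p))$ a direct sum of uniserial projective-injective modules, while $2$-Gorensteinness forces $\idim P_1\le 1$. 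By the dual of \Cref{pdim1Lemma}(i), every indecomposable summand $P(y)$ of $P_1$ then satisfies $\dim\soc P(y)\le 2$, so it is enough to produce such a summand with $\dim\soc P(y)\ge 3$.

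To identify $y$, let $c$ be the longest common right-suffix of $p_1,p_2,p_3$ and set $y:=s(c)$; by the definition of $c$ there exists at least one pair $(i,j)$ with $s_{p_i,p_j}=c$. By \Cref{lem::descriptionFirstSyzygyMonomial}, the element $r_{p_i,p_j}-r_{p_j,p_i}$ lies in $\Omega I(x)$ at vertex $y$, supported on exactly the $i$-th and $j$-th summands of $P_0$. The maximality of $c$ forces $r_{p_i,p_j}$ and $r_{p_j,p_i}$ to end with distinct arrows into $y$, and---arguing as in the proof of \Cref{pdim1Lemma}---this lets one verify that the element is a minimal generator of $\Omega I(x)$, so $P(y)$ appears as an indecomposable summand of $P_1$. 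To conclude, \Cref{lem::1GorMonomial: dimsocP(x)=dimtopI(x)} reduces $\dim\soc P(y)\ge 3$ to $\dim\top I(y)\ge 3$. Setting $q_k:=p_k/c$ and extending each $q_k$ by prepending arrows to a left-maximal path $\tilde p_k$ ending at $y$, the maximality of $c$ ensures $q_3$ ends at $y$ with an arrow different from those of $q_1$ and $q_2$, while the maximality of $s_{p_1,p_2}$ keeps $\tilde p_1\neq\tilde p_2$ even when $q_1,q_2$ agree at their last arrow at $y$. Hence $\dim\top I(y)\ge 3$, which contradicts the bound forced by $2$-Gorensteinness.

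The main obstacle is the combinatorial control of two delicate points: that the branching generator $r_{p_i,p_j}-r_{p_j,p_i}$ truly survives in $\top\Omega I(x)$ (so that $P(y)$ is an honest indecomposable summand of $P_1$), and that the three truncations $q_1,q_2,q_3$ propagate the $3$-way branching at $x$ to a $3$-way branching of left-maximal paths at $y$. When the pairwise suffixes $s_{p_i,p_j}$ are unequal, several candidate branching elements at different vertices appear in $\Omega I(x)$ and some collapse modulo $\rad$; the choice of $c$ as the global three-way common suffix (rather than any pairwise one) is what isolates a single surviving summand $P(y)$ whose socle is forced to have dimension at least $3$.
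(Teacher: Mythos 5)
Your proof is correct and is the exact dual of the paper's own argument; it cannot really be called a different route, since the left--right symmetry of the $n$-Gorenstein condition (\Cref{thm::preliminaries:left-right-symmetry}) identifies the two. The paper proves $\dim\soc P(x)\le 2$ directly and deduces the injective bound from $A^{\mathrm{op}}$, by examining the minimal injective coresolution $0\to P(x)\to I^0\to I^1\to\cdots$ and exhibiting a summand $I(t(p))$ of $I^1$ (with $p$ the maximal common \emph{left} prefix of three right-maximal paths from $x$, so that $q_1,q_2,q_3$ are automatically right-maximal) whose $\top I(t(p))$ has dimension $\ge 3$, hence $\pdim I(t(p))\ge 2$ by \Cref{pdim1Lemma}(i). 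You prove $\dim\top I(x)\le 2$ directly and deduce the projective bound from $A^{\mathrm{op}}$, by examining the minimal projective resolution $\cdots\to P_1\to P_0\to I(x)\to 0$ and exhibiting a summand $P(y)$ of $P_1$ (with $c$ the maximal common \emph{right} suffix, $y=s(c)$) whose $\soc P(y)$ has dimension $\ge 3$, hence $\idim P(y)\ge 2$. Both arguments invoke \Cref{pdim1Lemma}(i), \Cref{lem::1GorMonomial: dimsocP(x)=dimtopI(x)}, and the $\Omega/\Sigma$ computation of \Cref{lem::descriptionFirstSyzygyMonomial} at the same level of detail, so your verification that $r_{p_i,p_j}-r_{p_j,p_i}$ survives in $\top\Omega I(x)$ sits at exactly the same level of ``argue as in the proof of \Cref{pdim1Lemma}'' that the paper itself uses for the dual claim $S_{t(p)}\hookrightarrow\soc\Sigma P(x)$.

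One small imprecision in the final paragraph: the maximality of $c$ does \emph{not} ensure that $q_3$ ends with an arrow different from both $q_1$'s and $q_2$'s last arrows --- it only rules out that all three share the same last arrow, and in fact all three last arrows could be pairwise distinct. Fortunately the arrow dichotomy is a red herring for the conclusion you need. Since $p_1,p_2,p_3$ are all left-maximal, no $q_k=p_k/c$ can be a right subpath of another $q_\ell$ (else $p_k=q_k c$ would be a strict right subpath of the left-maximal path $p_\ell=q_\ell c$). Consequently any choice of left-maximal extensions $\tilde p_1,\tilde p_2,\tilde p_3$ of $q_1,q_2,q_3$ must be pairwise distinct (two coinciding would force one $q_k$ to be a right subpath of the other), so $\dim\top I(y)\ge 3$ follows without any case analysis on the last arrows. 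Note also that in the paper's version this step is painless because the right-maximality of $p_i=pq_i$ makes the $q_i$ automatically right-maximal; your $q_k=p_k/c$ are not automatically left-maximal, which is why you need this extra extension argument at all.
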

\begin{proof}
It is enough to prove the statement for $P(x),$ the dual statement for $I(x)$ follows then immediately from the fact that $A$ is $2$-Gorenstein if and only if $A^{op}$ is. 
We will show this by examining the minimal injective resolution of a $P(x)$.

Let 
$$0\longrightarrow P(x)\longrightarrow I^0 \longrightarrow I^1 \longrightarrow \ldots $$ be a minimal injective resolution of $P(x).$ 
As described in \Cref{subsec::notationMonomial}, $P(x)= e_xA$ is generated as a $k$-vector space by $C$, the set of paths starting at $x$ not contained in $I,$ and $C_{\max}$ denotes the set of right-maximal paths in $C$.

Assume that there is a path $p$ such that there are three different paths $p_1,p_2,p_3\in C_{\max}$ with $p_i=pq_i$ for some paths $q_i,$ and that $p$ is maximal with this property. Then dual to our argument in Lemma \ref{pdim1Lemma}, by the maximality of $p,$ $S(t(p))$ is going to be a direct summand of $\soc(\coker(d^0)).$ Thus, $I(t(p))$ is a direct summand of $I^1.$ However, $\dim (\soc P(t(p))\ge 3$ as $q_1,q_2$ and $q_3$ are three different paths starting at $t(p)$ and none of them is a subpath of the others. We can conclude from \Cref{lem::1GorMonomial: dimsocP(x)=dimtopI(x)} that $\dim (\top I(t(p)))=\dim (\soc P(t(p)))\ge 3$, which ensures that $\pdim I(t(p))\ge 2$ as established in \Cref{pdim1Lemma} (i). This contradicts the 2-Gorenstein assumption on $A$.

\end{proof}

With the next lemma we prove that a $2$-Gorenstein monomial algebra must satisfy the same condition as Auslander-Gorenstein gentle algebras, see \Cref{gentleCharacterisation}.

\begin{lemma}
\label{deg4vertexLemma}
    Let $A=kQ/I$ be monomial and $2$-Gorenstein. Then for any vertex $x$ we have $\deg^{\text{out}}(x)=2$ if and only if $\deg^{\text{in}}(x)=2$. 
    
\end{lemma}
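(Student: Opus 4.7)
By the left-right symmetry of the $2$-Gorenstein property (\Cref{thm::preliminaries:left-right-symmetry}) it suffices to prove that $\deg^{\text{out}}(x)=2$ implies $\deg^{\text{in}}(x)=2$. Assume $\deg^{\text{out}}(x)=2$ with outgoing arrows $b_1,b_2$. Each $b_i$ extends to a right-maximal path starting at $x$, and the two extensions are distinct, so $|C_{\max}(x)|\ge 2$. By \Cref{biserialLemma} we have $|C_{\max}(x)|\le 2$, hence $|C_{\max}(x)|=2$, and then \Cref{lem::1GorMonomial: dimsocP(x)=dimtopI(x)} gives $|B_{\max}(x)|=2$. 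Suppose for contradiction $\deg^{\text{in}}(x)\le 1$. If $\deg^{\text{in}}(x)=0$, the only path ending at $x$ is $e_x$, forcing $|B_{\max}(x)|\le 1$, a contradiction.

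Thus $\deg^{\text{in}}(x)=1$ with unique incoming arrow $\alpha$. Both elements of $B_{\max}(x)$ have positive length and end with $\alpha$, so write $p_i=q_i\alpha$ for $i=1,2$ with $q_1\ne q_2$. In the projective cover $P(s(p_1))\oplus P(s(p_2))\twoheadrightarrow I(x)$ each summand $P(s(p_i))$ is, by $1$-Gorenstein, uniserial projective-injective, corresponding to some left- and right-maximal path $w_i=p_iv_i$. Elementary arguments give $s(p_1)\ne s(p_2)$, that $v_i$ has positive length (else $I(x)$ would be uniserial projective-injective), and that $v_1,v_2$ begin with the two different outgoing arrows $b_1,b_2$ at $x$ (a common first arrow would force $P(s(p_1))\cong P(s(p_2))$). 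Uniseriality of each $P(s(p_i))$ then forces the cross-relations $p_1b_2,p_2b_1\in I$. The same biserial argument applied at $s(\alpha)$ produces $|C_{\max}(s(\alpha))|=2$ via the right-maximal paths $\alpha v_1,\alpha v_2$, forcing $\deg^{\text{out}}(s(\alpha))=1$ and, by \Cref{lem::1GorMonomial: dimsocP(x)=dimtopI(x)}, $|B_{\max}(s(\alpha))|=2$.

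The contradiction comes from $s(\alpha)$. Let $\tilde q_1,\tilde q_2$ be the two left-maximal paths ending at $s(\alpha)$ (an analysis of the $1$-Gorenstein correspondence between these and $P(s(p_i))\cong I(t_i)$ allows us to take $\tilde q_i$ to extend $q_i$, with $\tilde q_i\alpha\notin I$). Using \Cref{lem::descriptionFirstSyzygyMonomial} together with the cross-relations, one shows that $\Omega(I(s(\alpha)))$ is indecomposable with non-simple top: the element $r_{\tilde q_1,\tilde q_2}-r_{\tilde q_2,\tilde q_1}$ lives at the merging vertex of the $\tilde q_i$, while each $\tilde q_i\alpha$ sits at $x$ and survives in the top because the only minimal relations at $x$ relevant to it are the cross-relations $p_1b_2,p_2b_1\in I$, which block continuations through $b_1,b_2$ but not the single arrow $\alpha$. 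Hence $\Omega(I(s(\alpha)))$ is non-projective and $\pdim I(s(\alpha))\ge 2$. On the other hand, in the embedding $P(x)\hookrightarrow I(t_1)\oplus I(t_2)$ the element $(\alpha v_1)'\oplus(\alpha v_2)'$ lies at vertex $s(\alpha)$ and, modulo the image of $P(x)$, is annihilated by every arrow (again using the cross-relations), so it contributes a simple socle summand $S_{s(\alpha)}$ to the cokernel. Consequently $I(s(\alpha))$ is a direct summand of $I^1$ in the minimal injective resolution of $A_A$, contradicting the $2$-Gorenstein requirement $\pdim I^1\le 1$. The main technical obstacle is verifying both $\tilde q_i\alpha\notin I$ and the independence of the candidate top generators of $\Omega(I(s(\alpha)))$, which ultimately reduces to careful combinatorics tracking which paths of $Q$ lie in $I$ under the already-established local constraints at $x$.
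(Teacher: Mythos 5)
The overall strategy is the mirror image of the paper's: you aim to exhibit $I(s(\alpha))$ as a direct summand of $I^1$ in the minimal injective resolution of $P(x)$ and argue $\pdim I(s(\alpha))\ge 2$, whereas the paper exhibits $P(x)$ as a direct summand of $P_1$ in the minimal projective resolution of $I(s(a))$ and uses $\idim P(x)\ge 2$. Both routes would contradict $2$-Gorensteinness, so the plan is sound; moreover your socle computation showing $S_{s(\alpha)}\hookrightarrow\soc\Sigma^1(P(x))$ is essentially right. However, the proof contains gaps that are not cosmetic. First, you assert $s(p_1)\neq s(p_2)$ ``by elementary arguments,'' but for left-maximal paths $p_1,p_2\in B_{\max}(x)$ this can fail (the unique right-maximal path from a common source can pass through $x$ twice), and your justification that $v_1,v_2$ begin with different arrows -- ``a common first arrow would force $P(s(p_1))\cong P(s(p_2))$'' -- does not actually establish that implication. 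Second, and more seriously, the step you flag yourself as an ``obstacle'' -- that $\tilde q_i\alpha\notin I$ -- is exactly where the argument can break. Since $p_i=q_i\alpha$ is left-maximal, $\tilde q_i\alpha\notin I$ holds if and only if $q_i$ itself is already left-maximal (i.e.\ $\tilde q_i=q_i$); in general $\tilde q_i$ is a strict left-extension of $q_i$ and then $\tilde q_i\alpha\in I$, so the second family of generators in $\mathcal S$ may be empty and your conclusion $\pdim I(s(\alpha))\ge 2$ does not follow from \Cref{lem::descriptionFirstSyzygyMonomial}.

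The paper sidesteps both problems. Its step 1 obtains $\idim P(x)\ge 2$ directly from the dual of \Cref{pdim1Lemma}(ii), applied to the right-maximal paths $z_1,z_2$ and the arrow $a$ with $az_1,az_2\notin I$ -- no syzygy computation at $s(a)$ is needed for this. Its step 2 locates $P(x)$ inside $P_1$ of the projective resolution of $I(s(a))$ by working with the paths $p_i$ arising from the projective-injective envelopes $w_i=p_iz_i$ (\Cref{lem::1GorMonomial: dimsocP(x)=dimtopI(x)}), rather than with elements of $B_{\max}(x)$; this choice makes the relevant non-membership in $I$ trackable. If you want to salvage your dual route, the cleanest fix is to replicate the paper's use of \Cref{pdim1Lemma}(ii) (not its dual) at the vertex $s(\alpha)$ to get $\pdim I(s(\alpha))\ge 2$ -- but to do that you still need to produce two left-maximal paths ending at $s(\alpha)$ and a single arrow extending both on the right outside $I$, which again hinges on the kind of $\tilde q_i\alpha\notin I$ verification you've left open.
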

\begin{proof}
We will use the notation from \Cref{subsec::notationMonomial} and some similar arguments as in Lemma \ref{biserialLemma} (ii).
 For the sake of contradiction, assume that there is an $x\in Q_0$ with $\deg^{\text{out}}(x)=2$ but $\deg^{\text{in}}(x)\le 1.$ Then by \Cref{biserialLemma}, $\soc(P(x))$ is $2$-dimensional and $C_{max}=\{z_1, z_2\},$ $z_1\neq z_2.$ Set $y_i=t(z_i).$ Then the injective envelope of $P(x)$ is $I^0\cong I(y_1)\oplus I(y_2).$ 
By the $1$-Gorenstein property of $A,$ $I(y_i)\cong M(w_i)\cong P(s(w_i))$ where $w_i$ is the unique left-maximal path ending at $y_i$. We write $w_i=p_iz_i.$  Since $P(s(w_i))$ is uniserial $p_i\neq e_x$, so $p_i$ consists of at least one arrow. By the $\deg^{\text{in}}(x)\le 1$ assumption, $p_1$ and $p_2$ must end with the same arrow $a,$ so $p_i=p_i^+a$ for $i=1,2.$

 {


\centering
\resizebox{0.4\linewidth}{!}{
    \input{pictures/deg_in=2iffdeg_out=2}}
    

}

In particular, $az_i\notin I$ for both $i=1$ and $i=2$, so we can apply the dual of \Cref{pdim1Lemma} (ii) to obtain that $\idim P(x)\ge 2.$ This motivates the following plan to reach a contradiction: Show that  $P(x)$ appears as a direct summand in the first term of a minimal projective resolution of an indecomposable injective $A$-module. This then violates the 2-Gorenstein assumption on $A.$

We prove that a correct choice for such an injective $A$-module is $I(s(a)).$ Denote a minimal projective resolution of this as
$$\ldots \rightarrow P_1 \xrightarrow{d_1} P_0 \xrightarrow{d_0} I(s(a)) \rightarrow 0. $$
Note that $p_1$ cannot be a subpath of $p_2$ and vice versa, as already discussed in \Cref{lem::1GorMonomial: dimsocP(x)=dimtopI(x)}. So the top of $I(s(a))$ is at least $2$-dimensional. By Lemma \ref{biserialLemma}, it is exactly $2$-dimensional. Let $q_1$ and $q_2$ be the left-maximal paths ending at $s(a).$ Then
$P_0\cong P(s(q_1))\oplus P(s(q_2))$, and because $A$ is $1$-Gorenstein, these modules are uniserial.
Because ${p_1}^+$ is not contained in ${p_2}^+$ and vice versa, 
$q_i$ ends with ${p_i}^+$ for $i=1,2.$ Let $p$ be a maximal path such that $q_i=r_ip$ for some $r_i.$  

 {


\centering
\resizebox{0.5\linewidth}{!}{
    \input{pictures/deg_in=2iffdeg_out=2_picture2}}
    

}

Then according to \Cref{lem::descriptionFirstSyzygyMonomial}, $r_1-r_2$ and $q_1a=r_1pa$ generate $\Omega^1(I(s(a)))=\ker(d_0)$ as a right $A$-module, and none of them are in the radical of this module. Thus, $\top \Omega^1(I(s(a)))\cong S(t(r_1))\oplus S(t(a))= S(t(r_1))\oplus S(x)$ and so $P^1\cong P(t(r_1))\oplus P(x).$ This is what we wanted to show.

A dual argument shows that $\deg^{\text{in}}(x)=2$ and $\deg^{\text{out}}(x)\le 1$ is also impossible. This concludes the proof.

\end{proof}

The following lemma tells us that at a degree $4$ vertex, the relations are fixed and the algebra looks locally like a gentle algebra.

\begin{lemma}
\label{lem::nolongrelLemma}
    Let $A=kQ/I$ be a $2$-Gorenstein monomial algebra. Let $x$ be a vertex with two incoming and two outgoing arrows, $a_1,a_2$ and $b_1,b_2,$ respectively. Then we can choose the labeling so that $a_1b_2, a_2b_1\in I$, and the other paths of length two, $a_1b_1$ and $a_2b_2$, are not contained in any minimal relations.


\centering
\resizebox{0.15\linewidth}{!}{
    \input{pictures/degree4vertex}}
    

\end{lemma}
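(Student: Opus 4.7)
My plan is to argue in three stages, using the results already established earlier in this section. At the degree-$4$ vertex $x$, combining biseriality (\Cref{biserialLemma}), the degree-balance \Cref{deg4vertexLemma}, and the pairing \Cref{lem::1GorMonomial: dimsocP(x)=dimtopI(x)} (with its dual), there are exactly two left-maximal paths $q_1,q_2$ ending at $x$ and two right-maximal paths $z_1,z_2$ starting at $x$. I label the incoming and outgoing arrows at $x$ so that $q_i$ ends with $a_i$, $z_i$ starts with $b_i$, the paths $w_i=q_iz_i$ are left-and-right maximal (so that $M(w_i)\cong P(s(q_i))\cong I(t(z_i))$ is projective-injective), and $q_iz_{3-i}\in I$. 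Using \Cref{lem::descriptionFirstSyzygyMonomial}, $P(x)$ appears as a summand of $P_1$ in the minimal projective resolution of $I(x)$ (coming from the generator $q_1-q_2$ of $\Omega I(x)$), so the 2-Gorenstein hypothesis gives $\idim P(x)\le 1$; dually $\pdim I(x)\le 1$. Applying \Cref{pdim1Lemma}(ii) to $I(x)$ then yields $q_ib_{3-i}\in I$, and its dual applied to $P(x)$ yields $a_{3-i}z_i\in I$. These are the weaker ``crossing'' conditions I start from.

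The main step will be to upgrade $q_ib_{3-i}\in I$ to the length-$2$ relation $a_ib_{3-i}\in I$; I focus on $a_1b_2$ since the other case is symmetric. I split on whether $q_1=a_1$. When $q_1=a_1$, the projective $P(s(a_1))=P(s(q_1))\cong M(a_1z_1)$ is projective-injective and uniserial, so the only non-zero continuation at $x$ in this module is $b_1$, forcing $a_1b_2=0$ and hence $a_1b_2\in I$. The hard case will be $q_1\neq a_1$: here I will argue by contradiction, assuming $a_1b_2\notin I$. Then $P(s(a_1))$ branches at $x$ into two distinct right-maximal paths, so $\dim \soc P(s(a_1))=2$ by \Cref{biserialLemma}; this forbids $s(a_1)$ from having a second outgoing arrow (else the socle would have dimension $\ge 3$), so $\deg^{\text{out}}(s(a_1))=1$, and then \Cref{deg4vertexLemma} gives $\deg^{\text{in}}(s(a_1))\le 1$. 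By \Cref{lem::1GorMonomial: dimsocP(x)=dimtopI(x)}, $\dim\top I(s(a_1))=2$, so two distinct left-maximal paths must end at $s(a_1)$. The key obstruction is to show this is impossible: following the unique incoming arrow $\alpha$ at $s(a_1)$ backwards, I expect that at each traversed degree-$4$ vertex, the relation $q_i b_{3-i}\in I$ established at that vertex by exactly the setup of the first stage (applied there, with no circularity) annihilates one of the two possible back-continuations; this leaves a unique left-maximal path ending at $s(a_1)$, contradicting $\dim\top I(s(a_1))=2$.

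For the third stage I must show that $a_1b_1$ (and by symmetry $a_2b_2$) is contained in no minimal relation. I will argue by contradiction: given a minimal relation $R\in I$ containing $a_1b_1$ as a consecutive subpath, since every subpath of $q_1z_1$ is outside $I$, $R$ cannot be contained in $q_1z_1$, so $R$ must deviate from $q_1z_1$ at some vertex $v$ (either by taking a different outgoing arrow moving forward, a different incoming arrow moving backward, or by extending past $s(q_1)$ or $t(z_1)$). Each such deviation requires two outgoing or two incoming arrows at $v$, so by \Cref{deg4vertexLemma} $v$ is degree-$4$; the conclusion of the second stage applied at $v$ then produces a length-$2$ crossing relation $a_i^v b_{3-i}^v\in I$ at $v$ as a subpath of $R$, and minimality of $R$ forces $R$ to equal this length-$2$ relation. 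But any such crossing relation avoids, by definition, the non-crossing pair $a_1b_1$ at $x$. The boundary cases where $R$ extends past $s(q_1)$ or $t(z_1)$ are handled identically, using the left- or right-maximality of $q_1z_1$ together with admissibility of $I$ to produce a length-$2$ relation. The main obstacle I anticipate is the recursive nature of the backward argument in stage two, but it decouples cleanly because the crossing conditions $q_i b_{3-i}\in I$ at each degree-$4$ vertex follow directly from 2-Gorenstein and do not rely on the lemma we are proving.
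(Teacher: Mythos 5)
Your plan has a genuine gap at the heart of the second stage. In the hard case ($q_1\neq a_1$, assuming $a_1b_2\notin I$), you conclude $\dim\top I(s(a_1))=2$ and $\deg^{\text{in}}(s(a_1))\le 1$, and want a contradiction by tracing the unique incoming arrow $\alpha$ backward and arguing that the crossing relations $q_i^vb_{3-i}^v\in I$ at intermediate degree-$4$ vertices $v$ force a unique backward continuation. But these relations concern the \emph{full left-maximal paths} $q_i^v$, not the length-two subpaths $a_i^v b_{3-i}^v$, and that distinction kills the recursion. Concretely, suppose $u_1,u_2$ are two distinct left-maximal paths ending at $s(a_1)$, branching at a degree-$4$ vertex $v$ which they both leave via $b_j^v$ but enter via $a_j^v$ and $a_{3-j}^v$ respectively. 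The segment of $u_2$ ending at $v$ is a right subpath of $q_{3-j}^v$; the relation $q_{3-j}^v b_j^v\in I$ only forces it to be a \emph{strict} right subpath, so there exists an arrow $\gamma$ with $\gamma(u_2|_{\le v})\notin I$. This does not contradict left-maximality of $u_2$, because $\gamma u_2$ can still lie in $I$ via a \emph{longer} minimal relation straddling $v$ that contains $a_{3-j}^v b_j^v$ only as an interior length-two subpath. Excluding that configuration requires precisely the length-two crossing relation $a_{3-j}^vb_j^v\in I$ at $v$ --- i.e.\ the conclusion of this lemma at $v$ --- so the claim that the recursion ``decouples cleanly'' from the lemma is not justified, and I do not see how to close the circularity.

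The paper's stage-two argument is short and avoids all of this: assuming $a_1b_2\notin I$, pick a right-maximal path $a_1b_2q$; since $I(t(q))$ is a summand of the injective envelope of $P(s(a_1))$, $1$-Gorensteinness forces $I(t(q))$ to be uniserial, and comparing $a_1b_2q$ with $a_2b_2q$ inside it gives $a_2b_2q\in I$. Symmetrically, a right-maximal path $a_2b_2r$ gives $a_1b_2r\in I$, so $q$ and $r$ are incomparable as left subpaths and one obtains three distinct right-maximal paths from $x$ (through $b_1$, $b_2q$ and $b_2r$), contradicting \Cref{biserialLemma}. Two smaller issues are also worth flagging: (a) your first stage asserts that $M(q_iz_i)$ is projective-injective via \Cref{lem::1GorMonomial: dimsocP(x)=dimtopI(x)}, but that lemma only supplies a right subpath $p_i$ of $q_i$ with $p_iz_i$ projective-injective; upgrading $p_i$ to the full left-maximal $q_i$ uses the $2$-Gorenstein monomorphism/epimorphism conditions on $d_1,d^1$ \emph{together with} the length-two crossing relations, so it belongs at the end of the proof, not at the start; (b) your stage three then silently relies on $q_1z_1\notin I$, so this dependency must be made explicit, exactly as the paper does.
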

\begin{proof}

    First, we show that $|\{a_ib_1, a_ib_2\}\cap I|\le 1.$ If this was not true, $a_i$ would be a right-maximal path, so the socle of $P(s(a_i))$ contained $S(x)$ as a direct summand. Therefore, the injective envelope of $P(s(a_i))$ contained $I(x)$ as a direct summand. However, $I(x)$ is not uniserial, and hence, not projective-injective. This contradicts the $1$-Gorenstein assumption on $A.$ 
    Dually, $|\{a_1b_i, a_2b_i\}\cap I|\le 1.$ So we can choose the labeling such that $a_1b_1,a_2b_2\notin I.$

    Next, we show that $ a_1b_2\notin I$ is not possible. For the sake of contradiction, assume it is true. Let $a_1b_2q\notin I$ be a right-maximal path. Then the injective envelope of $P(s(a_1))$ contains $I(t(q))$ as a direct summand. It follows from the 1-Gorenstein assumption on $A$ that this must be uniserial, so $a_2b_2q\in I.$
    Since $a_2b_2\neq 0$, there exists a right-maximal path $a_2b_2r\notin I$ as well. Then $I(t(r))$ is a direct summand of the injective envelope of $P(s(a_2))$, so again by the 1-Gorenstein assumption, we obtain that $I(t(r))$ is uniserial, so $a_1b_2r\in I.$ 
    Hence, $r$ cannot be a left subpath of $q$ and vice versa. This implies, that there are at least three different right-maximal paths starting at $x$, starting with $b_1, b_2q$ and $b_2r$, respectively. This, however, contradicts \Cref{biserialLemma}. Therefore, $a_1b_2, a_2b_1 \in I$.
\vspace{10pt}

{
\centering
\resizebox{0.25\linewidth}{!}{
    \input{pictures/degree4vtxLemma_1}}
    
}

It only remains to show that $a_1b_1$ (and then by symmetry also $a_2b_2$) is not contained in any minimal path in $I.$
Since $A$ is $2$-Gorenstein, by Lemma \ref{biserialLemma}, there are exactly two left-maximal paths ending at $x,$ $p_1$ and $p_2$ with $p_i$ ending with the arrow $a_i$; and two right-maximal paths starting at $x,$ $z_1$ and $z_2$ with $z_i$ starting with $b_i.$ Then the projective cover of $I(x),$ $P_0\cong P(s(p_1))\oplus P(s(p_2)),$ and the injective envelope of $P(x)$, $I^0\cong I(t(z_1))\oplus I(t(z_2))$ are both projective-injective, so, $P(s(p_i))$ corresponds to the unique right-maximal path starting at $s(p_i)$, call it $u_i=p_i\Tilde{z_i},$ and $I(t(z_i))$ corresponds to the unique left-maximal path ending at $t(z_i),$ call it $w_i=\Tilde{p_i}z_i.$ We have $p_ib_i\notin I$, otherwise $I(x)$ would be a direct summand of the injective envelope of $P(s(p_i)),$ which contradicts $A$ being $1$-Gorenstein. Similarly, $a_iz_i\notin I$.

\vspace{10 pt}

{
\centering
\resizebox{0.3\linewidth}{!}{
    \input{pictures/degree4vtxLemma_2}}
    
}

\vspace{10pt}

So we have that $$P(x)\xrightarrow{d_1} P(s(p_1))\oplus P(s(p_2))\xrightarrow{d_0} I(x)\longrightarrow 0$$ is a minimal projective presentation of $I(x)$ and
$$0\longrightarrow P(x)\xrightarrow{d^0} I(t(z_1))\oplus I(t(z_2))\xrightarrow{d^1} I(x)$$
is a minimal injective presentation of $P(x).$ Because $A$ is assumed to be $2$-Gorenstein, $I(x)$ must have projective dimension $1$ and $P(x)$ must have injective dimension $1,$ equivalently, $d_1$ must be a monomorphism and $d^1$ must be an epimorphism. Using the notation of \Cref{subsec::notationMonomial} where we provided an explicit basis and the right $A$-module structure of indecomposable injective and projective $A$-modules, we can spell out the maps in the above sequences: 
$$d_0: (s_1,s_2)\mapsto p_1'\cdot s_1+p_2'\cdot s_2, \textcolor{white}{space} d_1: s\mapsto (p_1s,-p_2s),$$
where $s_i$ is a path starting at $s(p_i)$ and $s$ is a path starting at $x,$ and 
$$d^0: s\mapsto z_1'\cdot s+z_2'\cdot s, \textcolor{white}{space} d^1: (s_1',s_2')\mapsto \Tilde{p_1}'\cdot \hat{s_1}-\tilde{p_2}'\cdot \hat{s_2},$$
where $s$ is again a path starting at $x$, $s_i$ is a path ending at $t(z_i),$ and $\hat{s_i}$ is the unique path such that $w_i=\hat{s_i}s_i.$
Using these, we can establish that $d_1$ is injective and $d^1$ is surjective if and only if $p_iz_i\notin I$ for both $i=1$ and $i=2.$ This is equivalent to $w_i=u_i=p_iz_i$, which is the same as $p_i=\Tilde{p_i} $ and $z_i=\Tilde{z_i}.$  

Now, if there would be a non-zero path $p\in I$ such that $p$ is minimal in $I$ and  $p=q_1a_1b_1q_2$ for some paths $q_1$ and $q_2,$ then by minimality of $p$ we had $q_1a_1\notin I$ and $b_1q_2\notin I,$ so there existed some paths $r_1$ and $r_2$ such that $p_1=r_1q_1a_1$ and $z_1=b_1q_2r_2.$ This, however, contradicts $w_1=p_1z_1,$ since $p_1z_1=r_1pr_2\in I$ but $w_i\notin I$ by definition. So such a $p$ cannot exist, and that was to prove. 

\end{proof}

\begin{remark}
\label{rmk::2GorMon_pdimIv=1}
    A consequence of the proof of the last lemma is that if we have a $2$-Gorenstein monomial algebra and $x$ is a degree 4 vertex then $\pdim I(x)=1$ and $\idim P(x)=1$.
\end{remark}

With this, we obtained some necessary conditions for a monomial algebra to be $2$-Gorenstein. These, however, are not sufficient. Indeed, every Nakayama algebra satisfies the conditions (1), (2) and (3) from \Cref{thm::classification2GorMonomial} but not every Nakayama algebra is $2$-Gorenstein. The missing piece for our classification result is described in the next lemma. 

\begin{lemma}

\label{endorstartLemma}
    Let $A=kQ/I$ be a $2$-Gorenstein monomial algebra. If an arrow $\alpha\in Q_1$ is contained in a minimal path in $I$ (i.e. a minimal relation) then $\alpha$ has to be the start or the end of a minimal path in $I.$
    
\vspace{5 pt}

{
\centering
\resizebox{0.3\linewidth}{!}{
    \input{pictures/startOrEndOfMinimalRel_picture1}}
    
}

\vspace{10pt}
\end{lemma}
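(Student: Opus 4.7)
The plan is to assume for contradiction that $A$ is $2$-Gorenstein, fix a minimal relation $p=q_1\alpha q_2$ with $|q_1|,|q_2|\ge 1$, and suppose $\alpha$ is neither the first nor the last arrow of any minimal relation; the goal is to produce an indecomposable injective $I(x)$ whose minimal projective resolution has second term $P_1=P(y)$ with $\idim P(y)\ge 2$, contradicting the left--right symmetric form of the $2$-Gorenstein condition. Setting $x=s(\alpha)$ and $y=t(\alpha)$, the first step is to show $\deg(x)=\deg(y)=2$: if $\deg(x)=4$, then \Cref{lem::nolongrelLemma} provides a length-$2$ minimal relation ending with $\alpha$, contradicting the hypothesis, and dually for $y$. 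Thus the unique in-arrow $\beta$ at $x$ and unique out-arrow $\gamma$ at $y$ satisfy $\beta\alpha,\alpha\gamma\notin I$; moreover $q_1$ ends with $\beta$ and $q_2$ starts with $\gamma$.

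The central combinatorial identity is $w_L\alpha r\in I$, where $w=w_L\alpha$ is the unique left-maximal path ending at $y$ and $w'=\alpha r$ the right-maximal path starting at $x$. Continuations of paths through biserial vertices are uniquely determined by the incoming arrow, trivially at degree-$2$ vertices and by \Cref{lem::nolongrelLemma} at degree-$4$ vertices. Applying this to the path $q_1\alpha$, which is non-zero by minimality of $p$ and ends at $y$, shows that $q_1$ must be a suffix of $w_L$; dually $q_2$ is a prefix of $r$. Writing $w_L=u\cdot q_1$ and $r=q_2\cdot t$ yields $w_L\alpha r=u\cdot(q_1\alpha q_2)\cdot t\in I$.

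To bound $\idim P(y)$ from below, invoke $1$-Gorensteinness: the injective envelope of $P(y)=M(r)$ is the projective-injective uniserial module $I(t(r))=M(w''')$, where $w'''$ is the left-maximal path ending at $t(r)$. Writing $w'''=v\alpha r$ with $v$ ending at $x$ and maximal satisfying $v\alpha r\notin I$, the short exact sequence
\begin{equation*}
0\longrightarrow P(y)\longrightarrow I(t(r))\longrightarrow M(v)\longrightarrow 0
\end{equation*}
exhibits the cokernel as the uniserial module $M(v)$ with socle $S_x$. Since $w_L\alpha r\in I$ but $v\alpha r\notin I$, $v$ is a strict suffix of $w_L$, so $M(v)$ is a proper submodule of $I(x)=M(w_L)$; since $I(x)$ is the unique indecomposable injective with socle $S_x$, we have $M(v)\not\cong I(x)$ and $M(v)$ is not injective. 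Therefore $\idim P(y)\ge 2$.

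Finally, $P(y)$ appears as $P_1$ in the minimal projective resolution of $I(x)=M(w_L)$: the projective cover $P_0=P(s(w))$ is projective-injective by $1$-Gorensteinness and equals $M(\tilde w)$ for the right-maximal path $\tilde w$ from $s(w)$; the kernel of $P_0\twoheadrightarrow I(x)$ is the submodule of $M(\tilde w)$ generated by the element $w=w_L\alpha$ (which has vertex $y$), so $\Omega I(x)$ has top $S_y$ and hence $P_1=P(y)$. Since $A$ is $2$-Gorenstein, we need $\idim P_1\le 1$, contradicting $\idim P(y)\ge 2$. The main subtlety to watch is confirming that the projective-injective modules $I(t(r))$ and $P(s(w))$ are really uniserial, which boils down to showing that ending vertices of right-maximal paths have in-degree at most one and starting vertices of left-maximal paths have out-degree at most one; both follow from $1$-Gorensteinness together with the biserial structure, since an indecomposable projective at a degree-$4$ vertex has a $2$-dimensional socle and hence cannot be isomorphic to any indecomposable injective.
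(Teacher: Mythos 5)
Your proof is correct, and it is precisely the left--right dual of the paper's argument. The paper examines the minimal injective presentation $0 \to P(y) \to I^0 \to I^1$, identifies $I^1 \cong I(x)$, and derives the contradiction from $\pdim I(x) \ge 2$, which is proved by computing $\Omega(I(x)) \cong M(r')$ with $r'$ a strict prefix of $q_2$ and hence not right-maximal, so $M(r')$ is not projective. You instead examine the minimal projective resolution $P_1 \to P_0 \to I(x) \to 0$, identify $P_1 \cong P(y)$, and derive the contradiction from $\idim P(y) \ge 2$, proved by computing $\Sigma(P(y)) \cong M(v)$ as a proper essential submodule of $I(x)$, hence not injective. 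These are equivalent obstructions to the $2$-Gorenstein condition thanks to \Cref{thm::preliminaries:left-right-symmetry}. In both proofs the three modules in play are the same: $P(y)$, the projective-injective middle term (your $P(s(w))$ equals the paper's $I(j)$, being the unique maximal path through $\alpha$), and $I(x)$. The underlying combinatorial input---that the minimal relation $p$ forces the maximal path through $\alpha$ to stop short on at least one side---is also the same; your packaging of it as the single identity $w_L\alpha r \in I$ is a nice compression of the paper's observation $i_x\alpha p_2\in I$. Neither route is simpler; each is the mirror of the other, and which one feels more natural depends on whether one prefers to violate $\pdim I^1 \le 1$ or $\idim P_1 \le 1$.
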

\begin{proof}
For a visualisation of this proof, we refer to \Cref{fig:startOrEndOfMinimalRel_picture2}.
    Lemma \ref{lem::nolongrelLemma} tells us that every arrow starting or ending at a degree $4$ vertex is either the start or the end of a minimal path of length two contained in $I.$ So we only need to show the statement for arrows not connected to a degree $4$ vertex. 
    
    Assume for the sake of contradiction that the statement is not true. Then there exists an arrow $\alpha\in Q_1$ such that it is neither the start nor the end of a minimal path in $I$ but there exists a path $p\in I$ which is minimal in $I$ and is of the form $p=p_1\alpha p_2$, where $p_1$ and $p_2$ are paths with a non-zero length.   
    Let us denote $s(\alpha)=x$ and $t(\alpha)=y.$  \Cref{lem::nolongrelLemma} implies that the inner vertices of $p$ cannot have degree $4$, so $\deg(x)\neq 4 \neq \deg(y).$ 
    To arrive at a contradiction, we need to look at a minimal injective presentation of $P(y).$ 
    We use the usual notation for this:
    $$0\rightarrow P(y) \rightarrow I^0 \rightarrow I^1.$$
    Since $A$ is a string algebra and $\deg^{\text{out}}(y)=1,$ $P(y)$ is a uniserial module corresponding to the unique right-maximal path starting at $y,$ called $p_y$. 
    Denote the end vertex of $p_y$ by $j.$ So $I^0\cong I(j)$ is the injective envelope of $P(y),$ and since $A$ is $1$-Gorenstein, $I(j)$ is also uniserial, and thus, corresponds to the unique left-maximal path $i_j$ ending at vertex $j$. 
    Note that $i_j$ must end with the path $\alpha p_y$. This is because $p_y\notin I$ and we assumed that no minimal path in $I$ starts with $\alpha$, so $\alpha p_y\notin I.$
    This implies that $\soc(\Sigma(P(y)))\cong S_x,$ so $I^1\cong I(x).$ 
    Because $x$ is not a degree $4$ vertex but there must be an arrow ending at $x$ as length$(p_1)\ge 1,$ there is exactly one arrow ending at $x.$ As $A$ is a string algebra, this means that $I(x)$ is uniserial, and corresponds to the unique left-maximal path $i_x$ ending at $x.$ 
    \\
    
    \textbf{Claim:} $\pdim I^1 \ge 2.$\\
    
    It is clearly enough to show this to obtain a contradiction to the assumption that $A$ is $2$-Gorenstein. This would imply that no such arrow $\alpha$ can exist, which concludes our proof. 
    
    Consider a minimal projective presentation of $I(x).$ 
    Assume that $i_x$ starts at vertex $k.$ Since $A$ is $1$-Gorenstein, $P_0\cong P(k)$, the projective cover of $I(x),$ must be uniserial. In other words, there must be a unique right-maximal path $p_k$ starting at $k.$ Moreover, we can show that $p_k=i_x\alpha r$ where $r$ is a strict left-subpath of $p_2.$ First, $i_x\notin I$ and $\alpha$ is not the end of a minimal relation, so $i_x\alpha\notin I.$ 
    Second, $i_x\alpha p_2$ contains $p$ as a right-subpath. Indeed, since $p$ was chosen to be a minimal relation, $p_1$ is not contained in $I.$ Thus, $p_1$ is a right-subpath of $i_x,$ and so $i_x\alpha p_2 $ contains $p$ as a subpath. This forces $i_x\alpha p_2\in I.$ 
    Since no inner vertex of $p$ has degree 4, $r$ must be a left subpath of $p_2.$ As $i_x\alpha p_2\in I$, $r$ is a strict subpath of $p_2.$ Thus, we can conclude that $p_k=i_x\alpha r,$ which implies that $\Omega(I(x))\cong M(r).$ But since $p_2\notin I$, $r$ is not right-maximal and $M(r)$ is not projective. Hence, $\pdim I^1=\pdim I(x)>1.$ This concludes the proof. 
\vspace{10 pt}
{

\begin{figure}[h]

\centering
\resizebox{0.8\linewidth}{!}{
    \input{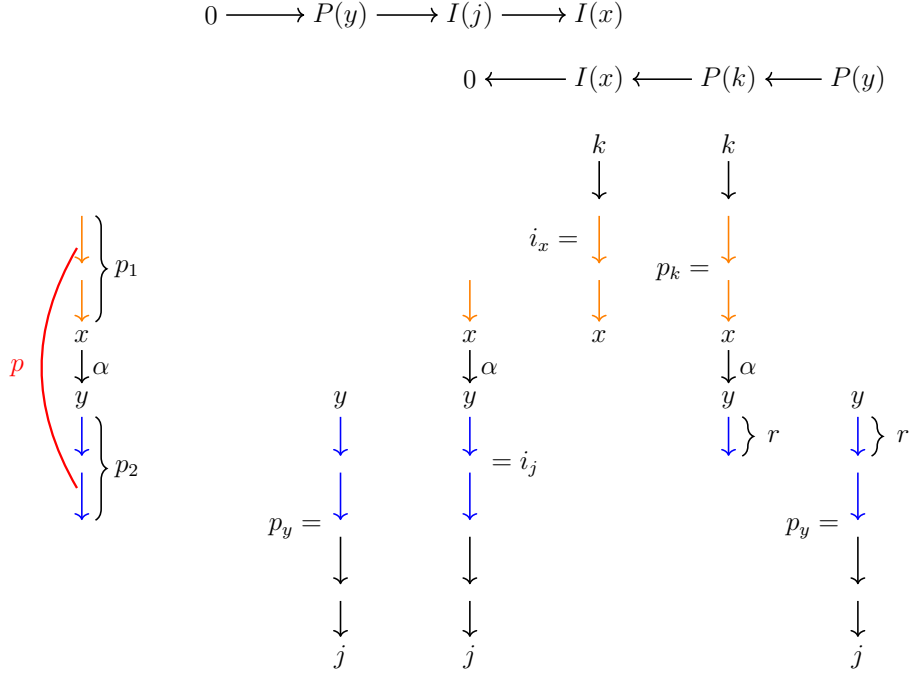}}
    
    \caption{Resolutions used in the proof of \Cref{endorstartLemma}}
    \label{fig:startOrEndOfMinimalRel_picture2}
\end{figure}

}

\end{proof}

As a final step, we prove that the conditions found so far are in fact sufficient. This way we achieve an explicit combinatorial characterisation of the $2$-Gorenstein property for monomial algebras. 

\begin{proof}[Proof of \Cref{thm::classification2GorMonomial}]
    By Lemma \ref{biserialLemma}, \ref{deg4vertexLemma}, \ref{lem::nolongrelLemma}, and \ref{endorstartLemma}, it only remains to prove the 'if' direction. Assume that (1)-(4) hold. 
    To prove the $2$-Gorenstein property, we will choose an arbitrary vertex $x$ and understand the minimal injective presentation of the $A$-module $P(x)$, which we denote by 
    $$0\longrightarrow P(x)\longrightarrow I^0 \longrightarrow I^1.$$
We distinguish two cases based on how the algebra $kQ/I$ looks around $x$. \\

    \textbf{Case I.} Assume that $\deg(x)=4.$ 
    
    Label its adjacent arrows according to (3). Conditions (1)-(3) imply that $A$ is a string algebra and so there are exactly two right-maximal paths starting at $x$, $z_1$ and $z_2$, which start with the different arrows $b_1$ and $b_2,$ respectively.
    Condition (3) implies that a right-maximal path not contained in $I$ cannot end at a vertex of degree $4.$ Indeed, let $y$ be a vertex of degree $4$ and label its incoming and outgoing arrows according to condition (3) by $a_1',a_2'$ and $b_1',b_2'$. Then if there is a path $p\notin I$ ending with the arrow $a_i',$ then $pb_i'\notin I$ either. Thus, $t(z_i)$ for $i=1,2$ cannot have degree $4.$ Then (1)-(3) imply that $I(t(z_i))$ is uniserial and  corresponds to the unique left-maximal path $w_i\notin I$ ending at $t(z_i).$ As $z_i\notin I,$ $z_i$ is a right-subpath of $w_i$. Similarly as before, condition (3) also implies that no left-maximal path starts at a degree 4 vertex, and thus, $w_i\neq z_i$ and $w_i$ ends with $a_iz_i.$ So we can write $w_i=r_ia_iz_i$ for some path $r_i.$
    With these observations, we can spell out the injective presentation of $P(x):$

    $$0\longrightarrow P(x) \xrightarrow{d^0} I^0\cong I(t(z_1))\oplus I(t(z_2)) \xrightarrow{d^1} I^1\cong I(x). $$
    
\vspace{10 pt}
{


\centering
\resizebox{\linewidth}{!}{
    \input{pictures/proofOf2GorMon1}}
    

}

\vspace{5 pt}

    Next, we show that $I(t(z_i)),$ and hence $I^0$, is projective-injective. 
    As we established, $s(w_i)$ cannot have degree $4.$ Using (1) and (2), we obtain that $\deg^{\text{out}}(s(w_i))=1$ and $P(s(w_i))$ is uniserial as $A$ is a string algebra. So there is a unique right-maximal path starting at $s(w_i).$ This must start with $w_i.$ Because $z_i$ is right-maximal, so is $w_i$, which implies that $I(t(z_i))\cong P(s(w_i))\cong M(w_i)$ is projective-injective. 

    Finally, we would like to show that $\pdim I(x)\le 1.$ For this, we need to understand $I(x)$ better.   Assumptions (1) and (3) ensure that there are exactly two left-maximal paths ending at $x$. We will show that these are $r_1a_1$ and $r_2a_2.$ 
    By the left-maximality of $w_i,$ $aw_i\in I$ for every arrow $a.$ Condition (3) states that $a_ib_i$ cannot be contained in any minimal path in $I,$ so already $ar_ia_i\in I$ must hold for every arrow $a.$ As $r_ia_i\notin I,$ $r_ia_i$ is a left-maximal path ending at $x.$ 
    This implies that $d^1$ is surjective, and therefore, $\idim P(x)=1.$ Because $I^0$ is projective, this sequence must be a minimal projective resolution for $I(x),$ which implies $\pdim I(x)=1.$ 

    Note that we have not used assumption (4) so far, but we will need it for our arguments in the following case.\\
    
    \textbf{Case II.} Let $\deg(x)\neq 4$. 
    
    Then (1) and (2) imply that $\deg^{\text{in}}(x),\deg^{\text{out}}(x)\le 1.$ 
    $A$ is a string algebra, therefore $P(x)$ is uniserial, corresponding to the unique right-maximal path starting at $x$. Call this $p_x$ and denote the last vertex of this path by $y.$ Then $\soc(P(x))\cong S(y),$ so $I^0\cong I(y)$. By condition (3), as $y$ is the end vertex of a right-maximal path, $\deg(y)\neq 4$. Then (1) and (2) imply that there is at most one arrow ending at $y,$ so $I(y)$ is uniserial and corresponds to the unique left-maximal path ending at $y,$ $i_y.$ For a visualisation of these paths we refer to the illustration at the end of this proof. An injective presentation of $P(x)$ looks as follows:
    $$0\longrightarrow P(x) \xrightarrow{d^0} I^0\cong I(y) \xrightarrow{d^1} I^1. $$
    
    We can show that $I(y)$ is always projective. To see this, observe that since $i_y$ ends with the path $p_x,$ which is a right-maximal path, therefore, $i_y$ is a right-maximal path as well. By definition, $i_y$ is left-maximal, so condition (3) prevents its starting vertex $s(i_y)$ to have degree $4.$ Then by conditions (1) - (3), there is only one right-maximal path starting at  $s(i_y),$ namely $i_y$.  Thus,   
    $P(s(i_y))\cong I(y)\cong M(i_y).$

It remains to show that $\pdim I^1\le 1.$ If $I^1\cong 0$, this is obviously true. If $I^1\ncong 0$, then $i_y\neq p_x,$ and so there is an arrow $\beta\in Q_1$ ending at $x$ with $\beta p_x\notin I.$ Let us denote the start of this arrow by $u.$ We have  $I^1\cong I(u).$ 
A dual argument to Case I tells us that $\pdim I(u)=1$ if $\deg(u)=4.$ 

So it remains to consider the case when $\deg(u)\neq 4.$ By assumptions (1)-(3) this is equivalent to $I(u)$ being uniserial. 
Observe that in this case $\beta$ cannot be the start of a minimal relation. Indeed, assume that there is a path $q\notin I$ with $\beta q\in I$. Then as $p_x$ is the unique right-maximal path starting at $x$, $q$ must be a left subpath of $p_x.$ This would, however, imply $\beta p_x\in I,$ which is a contradiction to $I^1\ncong 0.$ So $\beta$ is indeed not the start of any minimal path in $I.$

Then by assumption (4), there must be a minimal path $p\in I$ which ends with $\beta.$ Using this, we can show that $\pdim I(u)=0.$ For this, we need to show that the unique left-maximal path ending at $u,$ $i_u,$ is also the unique right-maximal path starting at $s(i_u).$ If this would not be right-maximal, then there would be an arrow $a\in Q_1$ such that $i_u a\notin I.$ The only arrow starting at the same vertex where $i_u$ ends is $\beta$, so we have to show that $i_u\beta\in I.$ For this, consider $p^+,$ which we get from $p$ by discarding its last arrow, $\beta.$ Note that $t(p^+)=u$ and since $p$ is minimal in $I,$ $p^+\notin I.$ Thus, $p^+$ must be a right subpath of the unique left-maximal path ending at $u,$ $i_u$. This implies that $i_u\beta$ contains the path $p\in I,$ so is itself contained in $I$. Thus, $i_u$ is right-maximal. Since it is also left-maximal, $s(i_u)$ cannot have degree $4$ by condition (3). So $P(s(i_u))\cong M(i_u)\cong I(u)$ is projective-injective.

\vspace{10 pt}
{


\centering
\resizebox{0.4\linewidth}{!}{
    \input{pictures/proofOf2GorMon2}}
    

}

In conclusion, for every choice of $x$ $\pdim I^0=0$ and $\pdim I^1\le1$. This is exactly the criterion for $A$ to be $2$-Gorenstein.

\end{proof}

As a direct consequence of \Cref{thm::classification2GorMonomial} and \Cref{rmk::2GorMon_pdimIv=1}, we obtain the main result of \cite{M19}.

\begin{corollary}
\label{cor::2GorMonAlgDomdimAtLeast2}
Let $A$ be a 2-Gorenstein monomial algebra with dominant dimension at least 2. Then $A$ is a Nakayama algebra.
\end{corollary}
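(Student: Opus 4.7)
The plan is to argue by contradiction, showing that the dominant-dimension hypothesis forbids the existence of any degree $4$ vertex in $Q$, whereupon the biserial condition collapses $Q$ into a disjoint union of linear and cyclic quivers, forcing $A$ to be Nakayama.

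By \Cref{thm::classification2GorMonomial}, since $A=kQ/I$ is a $2$-Gorenstein monomial algebra it satisfies conditions (1)--(4) listed there. In particular, conditions (1) and (2) imply that each vertex $v\in Q_0$ satisfies either $\deg^{\text{in}}(v)=\deg^{\text{out}}(v)=2$ or $\deg^{\text{in}}(v),\deg^{\text{out}}(v)\le 1$. Assume for contradiction that some vertex $x\in Q_0$ has $\deg(x)=4$. Following Case~I of the proof of \Cref{thm::classification2GorMonomial}, a minimal injective presentation of $P(x)$ has the form
$$0 \longrightarrow P(x) \longrightarrow I(t(z_1))\oplus I(t(z_2)) \xrightarrow{d^1} I(x),$$
with $I(t(z_1))\oplus I(t(z_2))$ projective-injective. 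By \Cref{rmk::2GorMon_pdimIv=1} (equivalently, by the surjectivity of $d^1$ established there), this extends to a minimal injective resolution
$$0 \longrightarrow P(x) \longrightarrow I^0 \longrightarrow I^1 \longrightarrow 0,$$
with $I^0 \cong I(t(z_1))\oplus I(t(z_2))$ and $I^1 \cong I(x)$.

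Now $\deg^{\text{in}}(x)=2$ and $A$ is a string algebra by \Cref{cor::2GormonomialImpliesString}, so there exist two distinct left-maximal paths ending at $x$; hence $\top I(x)$ is two-dimensional and $I(x)$ is not uniserial. On the other hand, as recorded in \Cref{subsec::notationMonomial}, every projective-injective module over a monomial algebra is uniserial. Consequently $I^1\cong I(x)$ is not projective-injective, so the minimal injective resolution of the indecomposable summand $P(x)$ of $A_A$ fails to have its second term projective-injective. Since $\operatorname{domdim}(A)=\min_v \operatorname{domdim}(P(v))$, this forces $\operatorname{domdim}(A)\le 1$, contradicting the hypothesis that $\operatorname{domdim}(A)\ge 2$.

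Therefore, $Q$ contains no vertex of degree $4$, and every vertex satisfies $\deg^{\text{in}}(v),\deg^{\text{out}}(v)\le 1$. A finite quiver with this property is a disjoint union of linear type $A$ quivers and oriented cycles; any admissible monomial ideal over such a quiver yields a Nakayama algebra, completing the proof. The only subtle step is the identification of $I^1$ with $I(x)$, but this is already carried out in the proof of \Cref{thm::classification2GorMonomial}, so no further obstacle arises.
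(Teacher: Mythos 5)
Your proof is correct and matches the paper's intended argument: the paper states the corollary follows directly from \Cref{thm::classification2GorMonomial} and \Cref{rmk::2GorMon_pdimIv=1}, and what you have done is unfold that reasoning explicitly. The key step — that a degree-$4$ vertex $x$ yields $I^1\cong I(x)$ in a minimal injective resolution of $P(x)$, with $I(x)$ non-uniserial hence not projective-injective, forcing $\operatorname{domdim} A\le 1$ — is exactly the content the paper extracts from Case I of the proof of \Cref{thm::classification2GorMonomial} combined with the uniseriality of projective-injectives over monomial algebras.
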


\newpage
\section{Reducing monomial algebras to Nakayama algebras }
\label{chap::reduction}

Assume that $A=kQ/I$ is a 2-Gorenstein monomial algebra with a degree 4 vertex $v$. Then it  satisfies condition (3) from \Cref{thm::classification2GorMonomial}.
More precisely, there are two incoming arrows at $v$: $a_1$ and $a_2,$ and two outgoing arrows: $b_1$ and $b_2,$ such that $a_1b_2,a_2b_1\in I$ but for $i=1,2$ $a_ib_i$ is not contained in any minimal relations. 
Then we can create a new (not necessarily connected) monomial algebra $B$ from $A$ by cutting the quiver at vertex $v$. We do this by replacing $v$ with two new vertices $v_1$ and $v_2$ such that both of them have degree two, each of them is connected to two of the arrows adjacent to $v$ so that we keep the relations, i.e. $t(a_1)=v_1=s(b_2)$ and   $t(a_2)=v_2=s(b_1).$ All the other vertices, arrows, and relations stay the same. The point is that this way we do not lose any relations since $a_ib_i$ were assumed not to be part of any minimal relation.

Conversely, we obtain $A$ from $B$ by gluing the vertices $v_1$ and $v_2.$ 
Note that in this gluing procedure it is important that the resulting algebra $A$ is again finite-dimensional; this is not guaranteed for an arbitrary monomial algebra $B$ with vertices $v_1$ and $v_2.$

{

\begin{figure}[h]

\centering
\resizebox{0.8\linewidth}{!}{
    \input{pictures/cutting_picture}}
    
\end{figure}

}

Surprisingly, these cutting and gluing procedures preserve many properties of the monomial algebra we are interested in. 

\begin{theorem}
\label{thm::cutting}
Let $A$ and $B$ be as described before. In particular, we assume that $A$ is 2-Gorenstein. Then the following hold:

\begin{enumerate}[\rm (1)]
    \item for any $n\ge 2$, $A$ is $n$-Gorenstein if and only if $B$ is. 
    \item for any $n\ge 1$, $A$ is $n$-Iwanaga-Gorenstein if and only if $B$ is.
    \item $A$ is Auslander-Gorenstein if and only if $B$ is.
    \item $\psi_A$ is well-defined and bijective if and only if $\psi_B$ is.
\end{enumerate}    
\end{theorem}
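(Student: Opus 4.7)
\emph{Setup at the cut.} I would first verify that $B$ is itself a $2$-Gorenstein monomial algebra by checking that cutting $v$ preserves conditions (1)--(4) of \Cref{thm::classification2GorMonomial}: the new vertices $v_1,v_2$ have degree $2$ with the relations $a_1b_2, a_2b_1$ preserved, and no other local combinatorics is affected. Thus $B$ is a string algebra by \Cref{cor::2GormonomialImpliesString}. The local analysis at $v$ in Case~I of the proof of \Cref{thm::classification2GorMonomial} (see also \Cref{rmk::2GorMon_pdimIv=1}) gives $\pdim I_A(v) = \idim P_A(v) = 1$, while the corresponding modules in $B$ are projective-injective uniserials: $I_B(v_j) \cong P_B(s(p_j))$ and $P_B(v_j) \cong I_B(t(z_j))$, where $p_j, z_j$ denote the left- and right-maximal paths at $v$ from that proof. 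In particular, for any $n \geq 2$ the modules associated to $\{v, v_1, v_2\}$ cannot obstruct the $n$-Gorenstein condition on either side.

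\emph{Dictionary of syzygies.} The heart of the argument is a dictionary between minimal projective resolutions in $A$ and in $B$. Both algebras are string algebras, and by \Cref{lem::syzygiesInMonomialAlg} every indecomposable summand of an $r$-th syzygy with $r \geq 2$ is a path module $pA$ (or $qB$) for a path of positive length. The relations $a_ib_{3-i} \in I$ at $v$ in $A$ force any such positive-length path through $v$ to stay on a single channel $a_i$-to-$b_i$, and the same holds in $B$ at $v_1, v_2$. This yields an explicit bijection between non-zero positive-length paths in $A$ and in $B$ that intertwines the right-module structures, so that the cyclic submodule $pA$ matches the corresponding $qB$. Applied inductively, minimal projective resolutions of indecomposable injectives in $A$ and $B$ correspond step-by-step, each occurrence of $P_A(v)$ being replaced by $P_B(v_j)$ for the appropriate channel index $j$.

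\emph{Deducing (1)--(4).} Statement (1) is then immediate: for every $x \notin \{v, v_1, v_2\}$ the dictionary gives $\pdim I_A(x) = \pdim I_B(x)$, while the exceptional vertices contribute only bounded syzygies by the first paragraph. For (2), \Cref{thm::preliminaries:left-right-symmetry} allows one to run the dual dictionary on minimal injective resolutions of indecomposable projectives, yielding $\idim P_A(x) = \idim P_B(x)$ for non-exceptional $x$; combined with the explicit bounds at $\{v, v_1, v_2\}$, the values $\idim A_A$ and $\idim {}_AA$ agree on both sides. Then (3) follows by combining (1), (2) and \Cref{thm::preliminaries::IG}. For (4), well-definedness of $\psi_A$ and $\psi_B$ at the non-exceptional vertices transfers via the dictionary (the indecomposable last term of the resolution is matched), and at the exceptional vertices it holds automatically: $\psi_A(v) = v$ via $\Omega^1(I_A(v)) = P_A(v)$, and $\psi_B(v_j) = s(p_j)$ via the projective-injectivity of $I_B(v_j)$. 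Bijectivity is then checked by an orbit analysis in which the $\psi_A$-fixed-point $v$ is replaced in $\psi_B$ by an extended orbit through the exceptional vertices, explicitly $t(z_j) \mapsto v_{3-j} \mapsto s(p_{3-j})$, and this preserves the bijectivity status on both sides.

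\emph{Main obstacle.} The hardest step is making the correspondence of minimal projective resolutions precise at the branching step through $v$: the indecomposable projective $P_A(v)$ is a tree module with two branches, whereas in $B$ it is replaced by the uniserial $P_B(v_j)$ of strictly smaller dimension. Despite this dimension mismatch, the resulting resolutions must have the same length on both sides. Justifying this requires a careful inductive analysis: one reduces via \Cref{lem::syzygiesInMonomialAlg} to path modules $pA$ and tracks, at each branching point, which $(a_i, b_i)$-channel is being used by the resolution. Equivalently, one must show that the splitting of $v$ into $v_1, v_2$ is reflected in the direct-sum decomposition of each $\Omega^r(I_A(x))$ along the channels specified by the relations at $v$.
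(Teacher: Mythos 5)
Your overall architecture (establish a dictionary between resolutions in $A$ and in $B$, handle the vertices near $v$ separately, then deduce (1)--(4)) matches the paper's, but the dictionary you propose does not exist, and this is a real gap, not a detail to be filled in.

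You assert "an explicit bijection between non-zero positive-length paths in $A$ and in $B$ that intertwines the right-module structures, so that the cyclic submodule $pA$ matches the corresponding $qB$." This fails. In $B$ the only arrows at $v_1$ are $a_1$ (incoming) and $b_2$ (outgoing), and $a_1b_2 \in I$; similarly at $v_2$. Hence \emph{no} positive-length path in $B$ passes through $v_1$ or $v_2$ as an inner vertex, whereas in $A$ the paths $a_1b_1$, $a_2b_2$ and all their extensions do pass through $v$. These $A$-paths have no counterpart in $B$ — they are destroyed by the cut, not matched. Consequently the modules $pA$ and their putative partners $qB$ genuinely differ in dimension (this is visible already in \Cref{ex::CuttingExample}, where $I_A(2)$ has dimension $5$ but the corresponding term $I_B(2)$ has dimension $3$), so no module-intertwining bijection of paths can underlie the argument.

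What the paper actually proves (\Cref{lem::same_resol}) is a \emph{truncation} correspondence $p \mapsto p'$, where $p'$ is $p$ with the portion beyond $v$ chopped off (the case analysis (1)--(3) specifies exactly how), together with the statement that the minimal injective resolutions of $M(p)$ over $A$ and of $M(p')$ over $B$ have the \emph{same number of indecomposable summands in each degree}, with the \emph{socle vertices} of corresponding summands related by the explicit rule \eqref{socles_in_inj_resol}. The terms themselves are not isomorphic. Your "Main obstacle" paragraph correctly names the tree-vs-uniserial problem at $v$ and correctly gestures at needing an induction through \Cref{lem::syzygiesInMonomialAlg} and a channel-tracking argument, but the argument you describe (matching modules) cannot close that gap; the insight needed is to track socles of cosyzygy terms rather than the modules, and to verify inductively (the "Claim" in the proof of \Cref{lem::same_resol}) that the first cosyzygy of $M(p)$ again decomposes into modules $M(p_\ell)$ with each $p_\ell$ in one of the three cases and $p_\ell' = g_\ell$.

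One further omission: after the dictionary is set up, the terms $I_A(v)$ and $I_B(v_i)$ have \emph{different} projective dimensions ($1$ vs.\ $0$), so the dictionary alone does not immediately give $\pdim_A I^d \le d \iff \pdim_B J^d \le d$. The paper handles this by observing that $I(v)$ can never occur as a degree-$0$ summand of a minimal injective resolution of an indecomposable projective (\Cref{rmk::2GorMon_pdimIv=1}), so the mismatch is harmless for all $d \ge 1$. Your phrase "cannot obstruct the $n$-Gorenstein condition" is too vague to cover this; the specific asymmetry must be noticed and discharged. (You also skip the case $s_1 = s_2$, which changes the combinatorics of the projective-injectives at the cut and the orbit structure of $\psi_B$; the paper treats it separately.)
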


This is demonstrated in \Cref{ex::CuttingExample}. \Cref{thm::cutting} allows us to reduce the problem of classifying all Auslander-Gorenstein monomial algebras, to the problem of classifying all Auslander-Gorenstein Nakayama algebras. 
Specifically, take a $2$-Gorenstein monomial algebra $A$ - we have already classified such algebras. Then according to \Cref{thm::classification2GorMonomial}, we can apply the cutting procedure at every degree 4 vertex to obtain a (not necessarily connected) Nakayama algebra $B$ which is Auslander-Gorenstein if and only if $A$ is. 

\begin{corollary}
\label{cor::redofQ1}
    We can reduce the problem of classifying all Auslander-Gorenstein monomial algebras to classifying all Auslander-Gorenstein Nakayama algebras.
\end{corollary}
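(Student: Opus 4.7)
The plan is to use Theorem \ref{thm::classification2GorMonomial} together with Theorem \ref{thm::cutting} to set up an explicit back-and-forth between Auslander-Gorenstein monomial algebras and certain (possibly disconnected) Auslander-Gorenstein Nakayama algebras equipped with gluing data. The first step is to observe that any Auslander-Gorenstein algebra is in particular $2$-Gorenstein, so Theorem \ref{thm::classification2GorMonomial} applies: the quiver $Q$ of $A=kQ/I$ is biserial, every vertex has in-degree $2$ if and only if it has out-degree $2$, at every degree-$4$ vertex $v$ the four adjacent arrows can be labelled $a_1,a_2,b_1,b_2$ with $a_1b_2, a_2b_1\in I$ and with $a_1b_1,a_2b_2$ not appearing inside any minimal relation.

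Next, I would iteratively cut $A$ at each of its degree-$4$ vertices as described at the beginning of \Cref{chap::reduction}, replacing $v$ by a pair $v_1,v_2$ with $t(a_1)=v_1=s(b_2)$ and $t(a_2)=v_2=s(b_1)$, and keeping all other vertices, arrows and relations intact. After cutting once, the resulting algebra is still monomial and, by \Cref{thm::cutting}(3), still Auslander-Gorenstein (hence still $2$-Gorenstein), so Theorem \ref{thm::classification2GorMonomial} applies again and we may cut at the next degree-$4$ vertex. Since each cut strictly decreases the number of degree-$4$ vertices and leaves all other degrees unchanged, the process terminates after finitely many steps and yields a (possibly disconnected) monomial algebra $B$ whose quiver has no degree-$4$ vertex. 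Combined with the biserial condition and with condition (2) of \Cref{thm::classification2GorMonomial}, every vertex of $B$ has in-degree at most $1$ and out-degree at most $1$, so $B$ is a (disjoint union of) Nakayama algebra(s). Repeated application of \Cref{thm::cutting}(3) gives that $B$ is Auslander-Gorenstein if and only if $A$ is.

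Conversely, given any Auslander-Gorenstein Nakayama algebra $B$ together with a choice of pairs of vertices to be glued (satisfying the obvious compatibility conditions so that the glued algebra is finite-dimensional and the paths $a_1b_1, a_2b_2$ avoid minimal relations), the gluing procedure inverse to cutting produces a monomial algebra $A$, and \Cref{thm::cutting}(3) again transfers the Auslander-Gorenstein property from $B$ to $A$. Thus the assignment $A\mapsto B$ is a surjection from Auslander-Gorenstein monomial algebras onto (enriched) Auslander-Gorenstein Nakayama algebras whose fibres are parametrised by the gluing data, reducing the classification of the former to the latter.

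The only genuine subtlety is ensuring that after each cut one may legitimately invoke Theorem \ref{thm::classification2GorMonomial} to cut again; this is where \Cref{thm::cutting}(1)--(3) is essential, as it guarantees that $2$-Gorensteinness (and indeed the Auslander-Gorenstein property itself) is preserved at every stage, so that conditions (1)--(4) of the classification continue to hold at the remaining degree-$4$ vertices and the cutting procedure remains well-defined throughout.
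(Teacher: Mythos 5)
Your proposal is correct and follows essentially the same approach as the paper: cut at every degree-$4$ vertex (justified by \Cref{thm::classification2GorMonomial}) and invoke \Cref{thm::cutting}(3) to transfer the Auslander-Gorenstein property between $A$ and the resulting Nakayama algebra $B$. You add some welcome detail the paper leaves implicit — the termination argument, the observation that absence of degree-$4$ vertices plus biseriality and condition (2) forces a Nakayama quiver, and the converse gluing direction — but the underlying argument is the same.
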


The importance of this reduction lies in the fact that the class of Nakayama algebras is much more restrictive than the one of monomial algebras. For instance, for Nakayama algebras the underlying quiver $Q$ must be a linearly oriented line or a linearly oriented cycle, whereas for monomial algebras, $Q$ can be any finite quiver. We refer to \Cref{chap::Nakayama}, where we recall the definition of Nakayama algebras and investigate their relation to the $n$-Gorenstein properties. Thanks to the previous theorem, many of the results proven for Nakayama algebras will be immediately inherited to the  monomial case. These are spelled out in Chapter \ref{chap::MainResultsForMonomial}.

\subsection{Proof of Theorem \ref{thm::cutting}}
\label{subsec::proofOfCuttingThm}
The proof of this theorem relies heavily on the next lemma which states roughly that the minimal injective resolutions of certain uniserial modules (most importantly, of uniserial projectives) stay the same after applying the cutting procedure. 

Consider $A,$ $B$ and $v$ as described in the beginning of \Cref{chap::reduction}. In this section, we use the notation $P_A(x)$ to indicate the indecomposable projective $A$-module at vertex $x,$ and similarly, we denote the indecomposable projective $B$-module at vertex $x$ by $P_B(x).$ We define the notation $I_A(x)$ and $I_B(x)$ analogously. Let $r_1$ and $r_2$ be the two left-maximal paths ending at $v$ such that the last arrow of $r_i$ is $a_i.$ Similarly, let $z_1$ and $z_2$ be the two right-maximal paths starting at $v$ such that the first arrow of $z_i$ is $b_i.$ Moreover, let $s_i$ be the starting vertex of $r_i$ and $t_i$ the end vertex of $z_i$. We should have the following picture in mind:
\vspace{10pt}

{\centering

    \resizebox{0.3\linewidth}{!}{
    \input{pictures/section5.1_picture}
    }
    

}

Note that $r_iz_i\notin I$, so $M(r_iz_i)$ is a projective-injective module for $i=1,2.$ This is a consequence of \Cref{thm::classification2GorMonomial} (3) and the 2-Gorenstein property of $A.$ 

\begin{lemma}
\label{lem::same_resol}
    Let $A=kQ/I$, $B$ and $v$ be as described before. In particular, we assume that $A$ is 2-Gorenstein. Let $p\notin I$ be a path in $Q$ such that one of the following three cases occurs:
    
    \begin{enumerate}[\rm (1)]
        \item $v\notin p$
        \item $p=r_iq_i$ for some path $q_i$ such that $q_i$ contains $v$ only as its starting vertex.
        \item $p=q_iz_i$ where $q_i\neq e_v$, $q_i\neq r_i$ and $q_i$ contains $v$ only as its end vertex.
    \end{enumerate}
    Let 
    $$0 \longrightarrow M(p) \longrightarrow \bigoplus_{\ell=1}^{m_0} I(v_{0,\ell}) \longrightarrow  \bigoplus_{\ell=1}^{m_1} I(v_{1,\ell}) \longrightarrow \ldots $$
    be a minimal injective resolution of $M(p)$ in mod(A). Here $m_j\in \mathbb{N}_0$ and the $v_{j,\ell}$ stand for vertices of $Q$. Then we obtain a minimal injective resolution of the $B$-module $M(p')$ via
    
    $$0 \longrightarrow M(p') \longrightarrow \bigoplus_{\ell=1}^{m_0} I(v'_{0,\ell}) \longrightarrow  \bigoplus_{\ell=1}^{m_1} I(v'_{1,\ell}) \longrightarrow \ldots $$
    where $p'$ is defined to be the following path in $B:$

    $$p'=
    \begin{cases}
    p, \text{ in Case {\rm (1) }}  \\
    e_{v_{1}},  \text{ in Case {\rm (2)} if }p=r_2\\
    e_{v_{2}}, \text{ in Case {\rm (2)} if }p=r_1\\
    q_i,  \text{ in Case {\rm (2)} if }q_i\neq e_v \\
    q_i,  \text{ in Case {\rm (3)}}
    \end{cases}
    $$
    and 

    \begin{equation}
    \label{socles_in_inj_resol}
    v'_{j,\ell}=\begin{cases}
    v_{j,\ell},\ \text{ if }v_{j,\ell}\notin\{v,t_1,t_2\}\\
    v_2 \text{ or } t_2, \ \text{ if }v_{j,\ell}=t_2\\
    v_1\text{ or } t_1, \ \text{ if }v_{j,\ell}=t_1\\
    v_1\text{ or } v_2,\ \text{ if }v_{j,\ell}=v.
    \end{cases}
    \end{equation}
    
\end{lemma}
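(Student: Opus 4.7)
The plan is to prove the lemma by induction on the length $N$ of the minimal injective resolution of $M(p)$ over $A$, which is finite since $A$ is $2$-Gorenstein and hence Iwanaga--Gorenstein by \Cref{thm::preliminaries::IG}. In the base case, $M(p)$ is itself injective, and one directly verifies in each of Cases (1)--(3) that $M(p')$ is an injective $B$-module whose socle matches \eqref{socles_in_inj_resol}. For instance, for Case (2) with $p = r_i z_i$, the projective-injective module $I_A(t_i) \cong M_A(r_i z_i)$ in $A$ corresponds to the projective-injective $I_B(t_i) \cong M_B(z_i) \cong P_B(v_{\bar i})$ in $B$ (where $\bar i$ denotes the index different from $i$); both resolutions consist of a single term $I(t_i)$.

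For the inductive step, first identify the injective envelope $I^0$: since $M(p)$ has simple socle $S_{t(p)}$, one has $I^0 = I_A(t(p))$, and by the definition of $p'$ the injective envelope of $M_B(p')$ has socle matching \eqref{socles_in_inj_resol}. Next, compute the first cosyzygy $\Sigma M(p) = I_A(t(p))/M(p)$. Since $A$ is monomial, $I_A(t(p))$ has the tree-like structure described in \Cref{subsec::notationMonomial}, and quotienting by the chain $M(p)$ yields a direct-sum decomposition of $\Sigma M(p)$ into submodules indexed by the ``branches'' off the chain $p$. The key structural observation is that in a $2$-Gorenstein monomial algebra, at every \emph{intermediate} vertex of $p$ of degree four, \Cref{thm::classification2GorMonomial}(3) supplies a relation of the form $a_j b_{\bar j}$ that annihilates the corresponding branch. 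Therefore all non-empty branches occur either at $s(p)$ (if $p$ is not left-maximal) or at $t(p)$ (when $\deg^{\mathrm{in}}(t(p)) = 2$), and each such non-empty branch is again of the form $M(q)$ for a path $q$ which itself satisfies Case (1), (2), or (3). An identical analysis in $B$ produces a bijection between the non-empty branches in $A$ and in $B$, with socles matched by the rule \eqref{socles_in_inj_resol}.

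Applying the induction hypothesis to each branch then yields matching minimal injective resolutions of the cosyzygies in $A$ and in $B$, and concatenating with the initial term $I^0$ completes the inductive step. The main obstacle is the combinatorial verification of this branch bijection near the cut vertex $v$. One must use \Cref{thm::classification2GorMonomial}(4) (that every arrow occurring in a minimal relation is the start or end of some minimal relation) to exclude branches arising ``deep inside'' the chain, and the labelling rule \eqref{socles_in_inj_resol} must be shown to be consistent as one iterates: whenever a branch has socle in $\{v, t_1, t_2\}$, the choice of the corresponding vertex in $\{v_1, v_2, t_1, t_2\}$ for $B$ must be determined by which of the arrows $a_j$ or $b_j$ produced the branch, and this determination must be compatible with the next step of the induction.
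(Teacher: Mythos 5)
Your overall architecture (identify the injective envelope $I_A(t(p))$, decompose the cosyzygy into the branch modules $M(p_1)\oplus M(p_2)$, show each $p_\ell$ again satisfies (1)--(3), recurse, and carefully track the relabelling near the cut vertex) is the same as the paper's, and the structural observations you make about branches at degree-4 vertices being killed by the cross relations of \Cref{thm::classification2GorMonomial}(3) are correct.

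The gap is in the well-foundedness of your induction. You propose to induct on ``the length $N$ of the minimal injective resolution of $M(p)$ over $A$, which is finite since $A$ is $2$-Gorenstein and hence Iwanaga--Gorenstein by \Cref{thm::preliminaries::IG}.'' This is wrong on two counts. First, \Cref{thm::preliminaries::IG} requires $A$ to be \emph{Auslander-Gorenstein} (hence in particular $\idim A_A<\infty$), not merely $2$-Gorenstein; $2$-Gorensteinness imposes no bound on $\idim A_A$. Second, even if $A$ were Iwanaga-Gorenstein, the module $M(p)$ is not generally projective, so nothing caps its injective dimension. And the infinite case really occurs in the lemma's range: in \Cref{ex::CuttingExample} the path $a_1$ lies in Case (2) with $q_i=e_v$, yet $\Sigma(M(a_1))\cong M(a_1)$, so the minimal injective resolution is periodic and infinite. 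Your induction therefore has no base case in general and is not well-founded. The correct induction variable is the cohomological degree $j$ (uniformly over all admissible $p$): the $(j+1)$-st term of the resolution of $M(p)$ is the $j$-th term of the resolution of $M(p_1)\oplus M(p_2)$, and one verifies the claim degree by degree without ever needing the resolution to terminate. Once this repair is made, the rest of your sketch coincides with the paper's argument. A secondary, smaller inaccuracy: you invoke \Cref{thm::classification2GorMonomial}(4) to rule out branches at interior vertices of $p$, but it is really conditions (1)--(3) (biseriality, degree balance, and the cross relations) that do this; condition (4) is not used in this step.
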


\begin{remark}
\label{rmk:: r_iContainsv}
Every path $p=r_iq_i$ in $A$ can be written as $p=r_jq_j$ where $q_j$ contains $v$ only as its starting vertex. Here $i,j\in \{1,2\}.$ This is a consequence of Lemma \ref{lem::z_iContainsV2X}. Thus, this extra assumption for $p$ in Case (2) is not an additional restriction, it is only necessary to choose the representation $r_jq_j$ for $p$ in order to get the correct definition for $p'.$ The same is true in Case (3).
\end{remark}

\begin{lemma}
\label{lem::z_iContainsV2X}
    If $z_i$ contains $v$ at least twice for an $i\in \{1,2\}$, then $z_i=qz_j$ and $r_j=r_iq.$ Here $i\neq j\in\{1,2\}$ and $q$ is a path which contains $v$ exactly twice (as its start and end vertex). Moreover, $z_j$ and $r_i$ contain $v$ only once.
    
    An example of this situation can be seen in Example \ref{ex::Section42GorMon}.
\end{lemma}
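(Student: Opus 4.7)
I would start by choosing $q$ to be the shortest left-subpath of $z_i$ of positive length that both starts and ends at $v$; by minimality $q$ contains $v$ only at its two endpoints. Writing $z_i = qz'$ for some path $z'$ starting at $v$, the key is to identify $z'$. The last arrow of $q$ is one of $a_1,a_2$, say $a_\ell$, and the first arrow of $z'$ is one of $b_1,b_2$, say $b_k$. Since $z_i\notin I$, the subpath $a_\ell b_k$ is not in $I$; condition (3) of \Cref{thm::classification2GorMonomial} then forces $\ell=k$. Because $A$ is a string algebra (\Cref{cor::2GormonomialImpliesString}), the right-maximal path starting at $v$ with first arrow $b_k$ is unique, so $z' = z_k$. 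The case $k=i$ would give $z_i = qz_i$ with $q$ of positive length, an impossibility for a finite path, so $k=j$ with $j\neq i$. This yields $z_i = qz_j$.

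Next I would establish $r_j = r_iq$. The concatenation $r_iq$ ends at $v$ with last arrow $a_j$; it is not in $I$ since $r_iqz_j = r_iz_i\notin I$ is the path underlying the projective-injective module $M(r_iz_i)\cong P(s_i)$. It is also left-maximal: if $\alpha r_iq\notin I$ for some arrow $\alpha$, then already $\alpha r_i\notin I$, contradicting left-maximality of $r_i$. By uniqueness of the left-maximal path ending at $v$ with specified last arrow (string algebra property), $r_iq = r_j$.

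For the moreover part, the fact that $z_j$ contains $v$ only once follows by a symmetry argument: if $z_j$ contained $v$ at least twice, the same construction applied to $z_j$ would produce $z_j = q'z_i$ with $q'$ of positive length, and then $z_i = qz_j = qq'z_i$ would be an infinite path. The fact that $r_i$ contains $v$ only once uses the already-proven statement about $z_j$: if $r_i$ contained $v$ at least twice, take $q''$ to be the shortest right-subpath of $r_i$ of positive length with both endpoints at $v$, and write $r_i = r'q''$. Then $q''z_i$ is a right-maximal path starting at $v$, and since $P(s_i) = M(r_iz_i)$ is uniserial (string algebra), any right-maximal path starting at $v$ with a given first arrow is unique; if $q''$ begins with $b_m$ this forces $q''z_i = z_m$. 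The case $m=i$ gives the infinite-path contradiction, while $m=j$ contradicts the fact just established that $z_j$ contains $v$ only once.

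\textbf{Main obstacle.} The delicate point is the uniqueness of right-maximal extensions at degree-$4$ vertices, which is not automatic for general monomial algebras. This is where the hypothesis that $A$ is $2$-Gorenstein enters essentially, via \Cref{cor::2GormonomialImpliesString} and the explicit description of relations at $v$ from \Cref{thm::classification2GorMonomial}(3); these together guarantee that at each vertex the outgoing arrow determines the right-maximal continuation uniquely, which is what makes the matching of paths $qz_j$ with $z_i$, and of $q''z_i$ with $z_m$, well-defined.
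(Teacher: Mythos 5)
Your proposal is correct and follows essentially the same approach as the paper: decompose $z_i$ at its first return to $v$, use condition~(3) of Theorem~\ref{thm::classification2GorMonomial} to match the arrows $a_\ell$ and $b_k$ across that return, invoke the string-algebra property from Corollary~\ref{cor::2GormonomialImpliesString} to identify $z'$ with one of $z_1,z_2$, and rule out $j=i$ via finite-dimensionality. You treat the ``moreover'' clause for $r_i$ more explicitly than the paper does, which is a genuine improvement in completeness. One small thing to patch: before referring to ``the first arrow of $z'$'' you should note that $z'$ is non-empty, i.e.\ that $z_i$ cannot end at $v$; this follows because condition~(3) prevents any right-maximal path from ending at $v$ (a path ending with $a_\ell$ at $v$ always extends by $b_\ell$), which is precisely the observation the paper uses to start the decomposition, and it is also what lets you speak of $b_k$.
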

\begin{proof}
  Since $\deg(v)=4,$ by condition (3) in \Cref{thm::classification2GorMonomial}, no right-maximal path can end at $v$. So if $z_i$ contains $v$ more than once, $z_i=b_i h a_jb_j r$ for some non-zero paths $h$ and $r$ and $j\in\{1,2\}.$ Observe that $i\neq j.$ Otherwise, $b_i h a_ib_i\notin I,$ so since $a_ib_i$ is not contained in any minimal relation, $(b_iha_i)^n\notin I$ for any power $n\ge 1$ of the cycle $b_iha_i.$ This contradicts that $A$ is a finite-dimensional algebra, so $i\neq j.$ 
By the same argument, $h$ and $r$ cannot contain $v,$ i.e. $z_i$ can contain $v$ at most twice.
Since $a_jb_j$ is not contained in any minimal relation, $b_jr$ is already right-maximal. The only such path starting with $b_j$ is $z_j.$
Thus, $q=b_iha_j$ and $z_i=qz_j$ indeed. As $q\notin I$ and $a_ib_i$ is not contained in any minimal relation, $r_iq\notin I.$ Because this is left-maximal and ends with the arrow $b_j,$ $r_iq=r_j.$

\end{proof}

\begin{example} 
\label{ex::CuttingExample}
Let $A=kQ/I$ be the monomial algebra where $Q$ is the quiver on the left-hand side in the following picture and $I=\langle ca_1,a_2^2, a_1b_2 \rangle.$ Note that this is the same algebra as $A_2$ from \Cref{ex::Section42GorMon}.
\vspace{10pt}

{\centering

    \resizebox{0.8\linewidth}{!}{
    \input{pictures/Section5_example}
    }
    

}

\vspace{10pt}
According to \Cref{thm::classification2GorMonomial} (or the explicit calculations in \Cref{ex::Section42GorMon}), $A$ is 2-Gorenstein. Applying the cutting procedure to vertex $v,$ we obtain $B,$ the non-connected quiver algebra on the right. Recall that in \Cref{ex::Section42GorMon}, we already described indecomposable projective and injective modules over $A.$ The indecomposable projective and injective modules over $B$ look as follows:

\vspace{10pt}
{

\centering

    \resizebox{\linewidth}{!}{
    \input{pictures/Section5_exampleProjInj}
    }
}
\\
First, consider a minimal injective resolution of $P_A(1)=M(c).$

\vspace{10pt}
{
\centering

    \resizebox{\linewidth}{!}{
    \input{pictures/new4}
    }
}
\\
In pictures, 

\vspace{10pt}
{

\centering

    \resizebox{\linewidth}{!}{
    \input{pictures/new}
    }
}

Then, consider the minimal injective resolution of $P_B(1)=M(c).$ Note that the path $c$ satisfies (1) from \Cref{lem::same_resol}, so $c'=c.$
\vspace{10pt}
{

\centering

    \resizebox{\linewidth}{!}{
    \input{pictures/new3}
    }
}
\\
In pictures,

\vspace{10pt}
{

\centering

    \resizebox{\linewidth}{!}{
    \input{pictures/new2}
    }
}

 Note that all the cosyzygy modules correspond to paths which satisfy either (1) or (2) from \Cref{lem::same_resol}, moreover, if $\Sigma^i(P_A(1))\cong M(p)$ then $\Sigma^i(P_B(1))\cong M(p')$ for $i=1,2,3.$ More concretely, $e_{v_1}=(a_1a_2)'$ since $a_1a_2$ belongs to Case (2), $e_2=(e_2)'$ as $e_2$ belongs to Case (1), and $b_2=(a_1a_2b_2)'$ since $a_1a_2b_2$ belongs to Case (2). This is not a coincidence but a general phenomenon, as we will see in the proof of \Cref{lem::same_resol}. It is easy to check that the above resolutions of $M(c)$ and $M(c')$ satisfy the statement of \Cref{lem::same_resol}. The same is clearly true for the minimal injective resolutions of the $\Sigma^i(P_A(1))$ and $\Sigma^i(P_B(1))$. 

 As another example, consider a minimal injective resolution of the $A$-module $M(a_1)\cong \begin{smallmatrix}
     2\\v
 \end{smallmatrix}.$ Note that $\Sigma(M(a_1))\cong M(a_1)$, hence the resolution is infinite:  
 
 $$0\longrightarrow M(a_1)\longrightarrow I_A(v) \longrightarrow I_A(v) \longrightarrow \ldots $$
 In pictures,
  $$0\longrightarrow \ \begin{matrix}
     2\\v
 \end{matrix} \ \longrightarrow {\begin{matrix}
     \textcolor{white}{22222222}\textcolor{blue}{2}\\2\textcolor{white}{222}\textcolor{blue}{v}\\v
 \end{matrix}} \longrightarrow \begin{matrix}
     \textcolor{white}{22222222}\textcolor{purple}{2}\\\textcolor{blue}{2}\textcolor{white}{222}\textcolor{purple}{v}\\\textcolor{blue}{v}
 \end{matrix}  \longrightarrow \ldots $$

We have that $a_1$ is a path satisfying case (2) from \Cref{lem::same_resol}. Thus, $a_1'=e_{v_2}$. Consider a minimal injective resolution of the $B$-module $M(e_{v_2}).$ Note that $\Sigma(M(e_{v_2}))\cong M(e_{v_2})$, hence the resolution is infinite:  

 $$0\longrightarrow M(e_{v_2})\longrightarrow I_B(v_2) \longrightarrow I_B(v_2) \longrightarrow \ldots $$
\\
 In pictures,
  $$0\longrightarrow \ \begin{matrix}
     v_2
 \end{matrix} \ \longrightarrow \begin{matrix}\textcolor{blue}{v_2}\\v_2 \end{matrix} \longrightarrow \begin{matrix}\textcolor{purple}{v_2}\\\textcolor{blue}{v_2} \end{matrix} \longrightarrow \ldots $$

 \vspace{8pt}

In conclusion, the above resolutions of $M(a_1)$ and $M(a_1')$ also have the form described in \Cref{lem::same_resol}. 

Recall that we already described a minimal injective resolution of $P_A(v)$ and established $\idim I_A(v)=1$ in \Cref{ex::Section42GorMon}. Calculation gives that $\idim I_A(1)=3=\pdim I_B(1)$ (which also corresponds to a dual statement of \Cref{lem::same_resol}.) Thus, both $A$ and $B$ are Auslander-Gorenstein. Moreover, computation gives $\psi_A=(1\ 2)(v)$ and $\psi_B=(1)(2\ v_1)(v_2)$. In conclusion, both are bijective, and \Cref{thm::cutting} holds in this case.




\end{example}

\begin{proof}[Proof of Lemma \ref{lem::same_resol}]
We proceed by induction on $j.$
First, we show that \Cref{socles_in_inj_resol} holds for all $v'_{0,\ell}.$  The injective envelope of the uniserial $A$-module $M(p)$ is $I_A(t(p))$, i.e. $m_0=1$ and $v_{0,1}=t(p).$ The injective envelope of the $B$-module $M(p')$ is $I_B(t(p'))$. Thus, $v'_{0,1}=t(p').$ Going through all three possible cases for $p$, it is easy to check that $v'_{0,1}$ satisfies \Cref{socles_in_inj_resol}.

Let $w_1$ be the left-maximal path in $A$ ending at $t(p)$ which ends with path $p$. This is unique as $A$ is $2$-Gorenstein and therefore, a string algebra. If $\deg^{\text{in}}(t(p))=2$ then there is one further left-maximal path ending at $t(p),$ call it $w_2.$ Then,
$$0 \rightarrow M(p) \rightarrow I_A(t(p))\rightarrow \Sigma(M(p))\cong M(p_1)\oplus M(p_2)\rightarrow  0$$
where $p_1=0$ if $w_1=p$. Otherwise, $p_1$ is the path such that $w_1=p_1c_1p$ where $c_1$ is an arrow. Similarly,  $p_2=0$ if $w_2$ does not exist and otherwise, $p_2$ is the path such that $w_2=p_2c_2$ where $c_2$ is an arrow. This situation is depicted in \Cref{fig:same_resol_picture1}. We can analogously define $g_1$ and $g_2$ as (possibly zero) paths in $B$ such that 
$$0 \rightarrow M(p') \rightarrow I_B(t(p'))\rightarrow \Sigma(M(p'))\cong  M(g_1)\oplus M(g_2)\rightarrow  0.$$ This is possible as $B$ is also a string algebra if $A$ is. Hence, we have $m_1=0, 1$ or $2$ and $v_{1,\ell}=t(p_{\ell})$ and $v'_{1,\ell}=t(g_{\ell})$ for $1\le \ell \le m_1$. Note that $g_{\ell}$ is always a right subpath of $p_{\ell}.$ This will be clearer when we go through the case distinction in the remainder of the proof. 

\vspace{10 pt}

\textbf{Claim.} For $\ell=1,2:$ $p_{\ell}$ satisfies (1), (2), or (3), and we have, $g_{\ell}=p_{\ell}'.$

\vspace{10 pt}
This claim is the basis of our induction argument. Assume the statement of this lemma holds until a $j=d\ge 0.$ By the claim, this induction assumption applies, in particular, for the resolutions of $M(p_{\ell})$ and $M(p_{\ell}')=M(g_{\ell}).$ We can use this to obtain the desired statement for $j=d+1,$ by noting that the $(d+1)$-st term in the minimal injective resolution of $M(p)$ (resp. $M(p')$) is equal to the $d$-th term in the minimal injective resolution of $M(p_1)\oplus M(p_2)$ (resp. $M(g_1)\oplus M(g_2)$).

So it remains to prove this claim. First, assume that $p$ belongs to Case (1), i.e. $v$ is not contained in $p$ and $p=p'$. 
If $v\notin p_{\ell},$ then $w_{\ell}$ is also a non-zero, left-maximal path in $B$ and $g_{\ell}=p_{\ell}.$ So $p_{\ell}$ belongs to Case (1) and $p_{\ell}'=g_{\ell}$ indeed. 

Next, assume that $v\in p_{\ell}.$ 
Due to condition (3) in \Cref{thm::classification2GorMonomial}, no left-maximal path starts at a degree 4 vertex, so 
$p_{\ell}=he_vq$ for some paths $h,q\notin I$ where $h$ has length at least one. We may choose $q$ such that it contains $v$ only as its starting vertex. Since $p$ belongs to Case (1), $q$ has length at least one as well.  Recall that $w_{\ell}$ is a left-maximal path by choice. Condition (3) in \Cref{thm::classification2GorMonomial} also implies that no minimal relation of length at least $3$ contains a degree $4$ vertex. Therefore, $h$ must be a left-maximal path already. The only two left-maximal paths ending at $v$ are $r_1$ and $r_2,$ so $h=r_i$ for $i=1$ or $i=2.$ This implies that $p_{\ell}$ belongs to Case (2). If ${\ell}=1$ then  $qc_1p$ only contains $v$ as its starting vertex, and so by construction of $B$, $qc_1p$ is a non-zero path in $B.$ Analogously, if ${\ell}=2,$ $qc_2$ is a non-zero path in $B$.
In both cases, $q$ must start with the arrow $b_i$ as $r_iz_i$ is the only right-maximal path starting at $s_i.$
Hence, $s(q)=s(b_i)=v_{i'}$ where $i'\in\{1,2\}$ such that $i\neq i'.$ We keep this notation for $i'$ throughout the remainder of the proof. Moreover, $qc_1p$ (resp. $qc_2)$ is a left-maximal path in $B$ ending at $t(p)$ as the only arrow ending at $v_{i'}$ is $a_{i'},$ and $a_{i'}b_i=0,$ which implies $a_{i'}qc_j=0.$ Thus, $g_{\ell}=q.$ Comparing this to the definition of $p_{\ell}',$ we get $p_{\ell}'=q=g_{\ell}.$

\Cref{fig:same_resol_picture1} illustrates the situation when $p$ belongs to Case (1), $p_1$ to Case (2) and $p_2$ to Case (1).

\begin{figure}[ht]
    \centering
    \resizebox{0.7\linewidth}{!}{
    \input{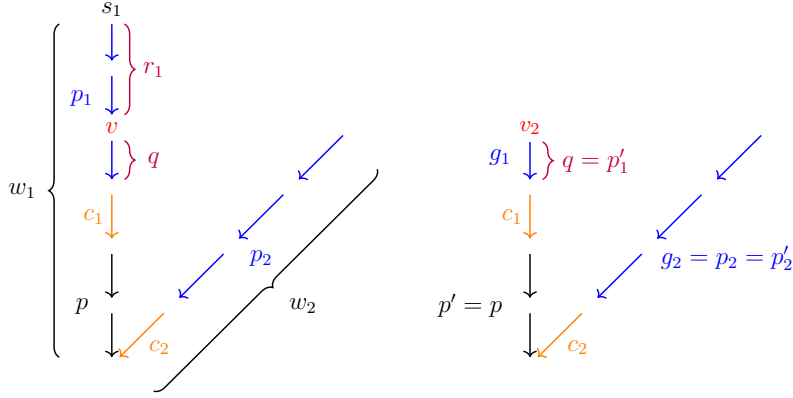}
    }
    \caption{On the left: $I_A(t(p))$, on the right: $I_B(t(p'))$, when $p$ and $p_2$ belong to Case (1) and $p_1$ to Case (2)}
    \label{fig:same_resol_picture1}
\end{figure}

Next, consider the case when $p$ belongs to Case (2), i.e. $p=r_iq_i$ and $q_i$ contains $v$ only once.  
Since $r_i$ is left-maximal, so is $p,$ and hence $p_1=0$ in this case.  
By definition, $s(p')=v_{i'}.$ In $B$, the only arrow ending at $v_{i'}$ is $a_{i'}$. If $q_i=e_v,$ then $p'=e_{v_{i'}}$ and we may assume that $g_1=0.$ If
$q_i\neq e_v$ then it must start with $b_i$ as $r_iz_i$ is the unique right-maximal path starting at $s_i.$ But $a_{i'}b_i=0$ and so $a_{i'}q_i=0$ which implies that $g_1=0$ in this case as well. So $m_1\le 1.$ 
Repeating the arguments from the previous case, we obtain that $p_2$ belongs to either Case (1) or Case (2), and in both cases, $p_2'=g_2.$ 

Figure \ref{fig:same_resol_picture2} illustrates the case when $p=r_1$ and $p_2$ belongs to Case (1).

{
\begin{figure}[h!]
\centering
    \resizebox{0.8\linewidth}{!}{
    \input{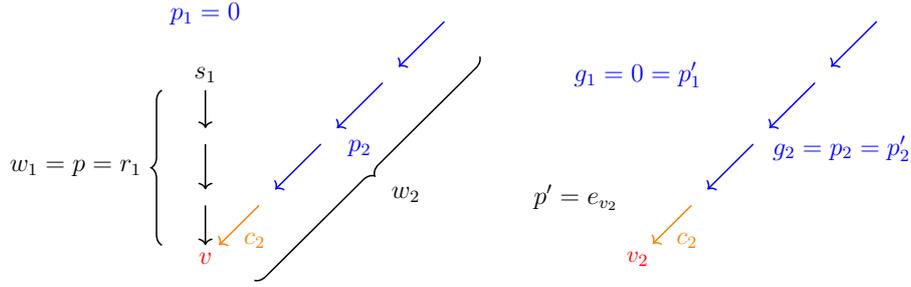}}
    
    \caption{On the left: $I_A(t(p))$, on the right: $I_B(t(p'))$, when $p=r_1$ and $p_2$ belongs to Case (1)}
    \label{fig:same_resol_picture2}
\end{figure}
}

Finally, assume that $p$ belongs to Case (3). By condition (2) and (3) in \Cref{thm::classification2GorMonomial}, if a right-maximal path in $A$ ends at a vertex then this vertex has in-degree one. Hence,  deg$^{\text{in}}(t_i)=1$ and thus, $p_2=0$. By definition, $p'=q_i$ so $t(p')=v_i.$ As $v_i$ has in-degree one, $g_2=0=p_2'.$ So $m_1\le 1.$
On the other hand, $p_1$ is the path such that $r_i=p_1cq_i$ for an arrow $c.$ We assumed that $q_i\neq r_i$
in Case (3), so such a $c$ exists and $p_1\neq 0.$ Here we used the fact that $I(t_i)=M(r_iz_i)$, so $w_1=r_iz_i$. 
Just as in the previous two cases, $p_1$ belongs either to Case (1) or Case (2) and $p_1'=g_1.$ This concludes the proof. 

We also provide a picture for Case (3) when $p_1$ belongs to Case (2), see \Cref{fig:same_resol_picture3}.
\\

{

\begin{figure}[h!]

\centering
\resizebox{0.8\linewidth}{!}{
\input{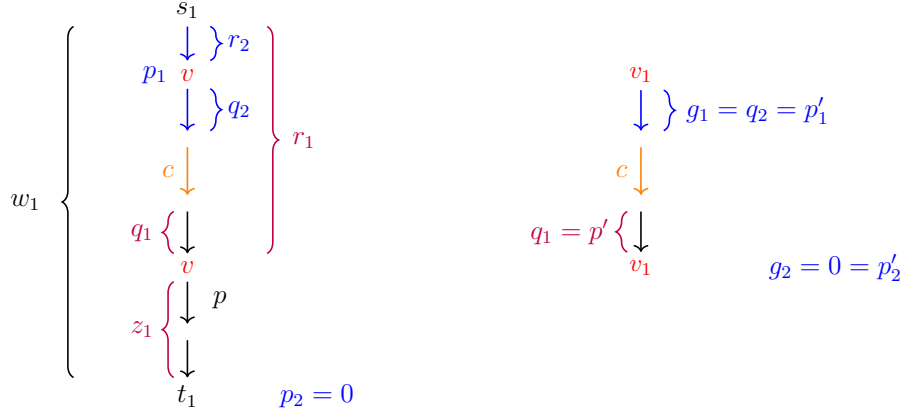}

}
    \caption{On the left: $I_A(t(p))$, on the right: $I_B(t(p'))$, when $p$ belongs to Case (3) and $p_1$ to Case (2)}
    \label{fig:same_resol_picture3}
\end{figure}
}

\end{proof}

\begin{proof}[Proof of \Cref{thm::cutting}] 
By the characterisation of $2$-Gorenstein monomial algebras, see \Cref{thm::classification2GorMonomial}, $A$ is $2$-Gorenstein if and only if $B$ is. 
Let us first deal with the minimal injective resolutions of non-uniserial indecomposable projective modules. Let $x$ be a degree 4 vertex in $A$. We saw in the proof of \Cref{lem::nolongrelLemma} that a minimal injective resolution of the $A$-module $P_A(x)$ looks as follows:
$$0\longrightarrow P_A(x) \longrightarrow I^0 \longrightarrow I_A(x) \longrightarrow 0,$$ 
where $I^0$ is projective-injective. Thus, this is also a minimal projective resolution for $I_A(x)$ and $\pdim_A(I_A(x))=1.$ If $x\neq v$ then by definition of the cutting procedure, $x$ is also a degree 4 vertex in $B.$ Since $B$ is 2-Gorenstein and monomial, we must have that a minimal injective resolution of the B-module $P_B(x)$ also has the following form:
$$0\longrightarrow P_B(x) \longrightarrow J^0 \longrightarrow I_B(x) \longrightarrow 0,$$
where $J^0$ is projective-injective and $\pdim_B(I_B(x))=1.$

Next, observe that $P_A(s_i)\cong M(r_iz_i)\cong I_A(t_i)$ for $i=1,2.$ Here $s_i$ and $t_i$ are defined as in the beginning of Section \ref{subsec::proofOfCuttingThm}. If $s_1\neq s_2$ then each of the $r_i$ and $z_i$ contain $v$ only once. This follows from  Lemma \ref{lem::z_iContainsV2X}. In this case, $P_B(s_i)\cong M(r_i) \cong I_B(v_i)$ and $P_B(v_i)\cong M(z_{i'}) \cong I_B(t_{i'})$ are projective-injective modules where $i'\in\{1,2\}$ with $i\neq i'$.
We will use this definition for $i'$ throughout the entire proof. 

If $s_1=s_2$ then $r_2=r_1q$ and $z_1=qz_2$ (the roles of the indices $1$ and $2$ may be exchanged), where $q$ is a path of length at least one and contains $v$ exactly twice (as its start and end vertex), and $r_1$ and $z_2$ contain $v$ exactly once. In this case, $P_B(s_1)\cong M(r_1) \cong I_B(v_1), P_B(v_1) \cong M(z_2)\cong I_B(t_1),$ and  $P_B(v_2)\cong M(q) \cong I_B(v_2)$ are projective-injective.  
 
Finally, we deal with the uniserial indecomposable projective $A$-modules different from $P_A(s_i)$. These are always of the form $M(p)\cong P_A(x)$ where $p$ is a path belonging either to Case (1) or Case (3) in Lemma \ref{lem::same_resol} and $x=s(p)$. It is easy to see that in both cases the indecomposable projective $B$-module $P_B(x)\cong M(p').$ Note that $x=s(p)=s(p').$ Then by the statement of Lemma \ref{lem::same_resol}, 
$$\idim_A(P_A(x))=\idim_B(P_B(x)).$$
In conclusion, this holds for every vertex $x$ of $A$ different from $v.$

Analogously, we can conclude using the dual of Lemma \ref{lem::same_resol} that $$\pdim_A(I_A(x))=\pdim_B(I_B(x))$$ for every vertex $x$ of $A$ with $x\neq v.$ Here we need to observe that in the characterisation of $2$-Gorenstein monomial algebras all the conditions are left-right symmetric, which allows for a dual argument. 

This, and Lemma \ref{lem::same_resol} imply that for any vertex $x\neq s_i$ such that  $P_A(x)$ is uniserial we have  \begin{equation}
\label{eq::pdim_injective_terms}
    \pdim_A(I^d)=\pdim_B(J^d)
\end{equation} where $I^d$ (resp. $J^d$) is the $d$-th term in a minimal injective resolution of the $A$-module $P_A(x)\cong M(p)$ (resp. of the $B$-module $P_B(x)\cong M(p')$) if $I^d$ does not contain any direct summand isomorphic to $I(v)$. 
So it remains to check the case when $I(v)$ is a direct summand of $I^d$. In this case, the corresponding direct summand in $J^d$ is $I(v_i)$ by Lemma \ref{lem::same_resol}.  Recall that $\pdim_A(I(v))=1$ and $\pdim_B(I(v_i))=0$. Note that $I(v)$ cannot appear in a term $I^d$ with $d=0.$ This is enough to obtain that $$\pdim_A(I^d)\le d \iff \pdim_A(J^d)\le d$$ for every $d.$
This concludes the proof for part (1), and (3) of the theorem. Note that we also obtained that $\idim A_A< \infty$ if and only if $\idim B_B< \infty$, so to prove (2), it is enough to show $\idim {}_AA< \infty$ if and only if $\idim {}_BB< \infty.$

In order to show this, we will once again need the dual of Lemma \ref{lem::same_resol}. This compares the terms of the minimal projective resolutions of $I_A(x)$ and $I_B(x)$ if $I_A(x)$ is uniserial and $x\neq t_i$.  
From this, and the above established results about the minimal projective resolution of $I(x)$ if $x=t_1$ or $t_2$, or $x$ is a degree 4 vertex we obtain that 
$\pdim_A(I_A(x))<\infty$ if and only if $\pdim_B(I_B(x))<\infty$ and $\Omega_A^{\pdim I_A(x)}(I_A(x))$ is indecomposable if and only if the same is true for $\Omega_B^{\pdim I_B(x)}(I_B(x))$. In other words, $\psi_A$ is well-defined for $A$ if and only if $\psi_B$ is well-defined for $B.$

In this case, $$\psi_A(x)=\psi_B(x)$$ if $\psi_A(x)\neq v,s_1$ or $s_2.$ Here we used the dual statement of Lemma \ref{lem::same_resol} once again. Note that $P_A(s_i)$ is injective, so it can only be the last non-zero term in a minimal projective resolution of $I(x)$ if it appears in degree zero, which happens only in case $x=t_i$. 
We have $\psi_A(t_i)=s_i$, $\psi_A(v)=v$. As discussed before, if $s_1\neq s_2$ $\psi_B(v_i)=s_i$ and $\psi_B(t_i)=v_{i'}$ for both $i=1$ and $i=2$. If $s_1 = s_2$ we have $\psi_B(v_1)=s_1$, $\psi_B(t_1)=v_1$ and $\psi_B(v_2)=v_{2}$ (the roles of $v_1$ and $v_2$ might be exchanged).
In both cases, we can conclude that $\psi_A$ is bijective if and only if $\psi_B$ is. This finishes the proof for parts (2) and (4). 
    
\end{proof}

\newpage

\section{Nakayama algebras and the n-Gorenstein property}

\label{chap::Nakayama}
In this chapter, we investigate the $n$-Gorenstein property of Nakayama algebras. The importance of this lies in the main result of the last chapter: If we understand the Auslander-Gorenstein condition for this class of algebras, we understand it for the much larger class of monomial algebras. 
First of all, it is easy to see that every Nakayama algebra is $1$-Gorenstein. Second, \Cref{thm::classification2GorMonomial} provides a characterisation of the  $2$-Gorenstein condition for Nakayama algebras. We recall this here. 

\begin{corollary}
\label{cor::2-Gor-class-Nak}
Let $A=kQ/I$ be a Nakayama algebra. Then $A$ is $2$-Gorenstein if and only if every arrow $\alpha\in Q_1$ which is contained in a minimal relation is the start or the end of a minimal relation.

\vspace{5pt}

{
\begin{figure}[h]

\centering
\resizebox{0.3\linewidth}{!}{
    \input{pictures/startOrEndOfMinimalRel_picture1}}
    
\end{figure}

}

\end{corollary}

This exact result has also been discovered by Jacob Fjeld Grevstad \cite{G23}. A different classification of $2$-Gorenstein linear Nakayama algebras can be found in \cite{RS17}, where the authors use the connection of Nakayama algebras to Dyck paths to provide a purely combinatorial characterisation. Explicitly, they show that a linear Nakayama algebra is $2$-Gorenstein if and only if its associated Dyck path contains no double deficiencies. This criterion is equivalent to Corollary \ref{cor::2-Gor-class-Nak}. Furthermore, the paper also proves that these algebras are enumerated by Motzkin paths, confirming another conjecture of Marczinzik.
We would also like to draw the reader's attention to the article \cite{STZ24}, in which the authors classify all cyclic Nakayama algebras that are minimal Auslander-Gorenstein, as well as all dominant Auslander regular Nakayama algebras of global dimension three. Both families represent notable subclasses of Auslander-Gorenstein algebras.

Although Nakayama algebras are often considered among the simplest classes of algebras from a representation-theoretic perspective, there is no known classification of the Auslander-Gorenstein ones, nor do we provide one here. This may be surprising, given that such a classification exists for gentle algebras. The challenge, however, lies in achieving uniform, global control over the injective resolutions of all projective modules. While gentle algebras have a consistent local structure, with all relations being of length two and looking the same at degree 4 vertices, Nakayama algebras vary significantly from vertex to vertex, making a global approach much more difficult.

In the following, we prove two results about the n-Gorenstein property of Nakayama algebras: First, we generalise a result of Iwanaga and Fuller (\Cref{2kImplies2k+1}), and second, we show that the Auslander-Gorenstein property can, in fact, be characterised via the well-definedness and bijectivity of the Auslander-Reiten map (\Cref{thm::AGiffARbijwelldef_Nak}).
\subsection{Definitions and notations}
\label{subsec::notationNakayama} 
We take this opportunity to introduce the necessary notions and notations used in this chapter.
A finite dimensional quiver algebra $A=kQ/I$ is a \textcolor{blue}{Nakayama algebra} if $Q$ is either the quiver 
\[\begin{tikzcd}[font=\normalsize, line width=0.6pt]
	 1 & {2} & \ldots & {N-1} & N
	\arrow[from=1-1, to=1-2]
	\arrow[from=1-2, to=1-3]
	\arrow[from=1-4, to=1-5]
	\arrow[from=1-3, to=1-4]
\end{tikzcd}\] or the quiver 
\[\begin{tikzcd}[font=\normalsize, line width=0.6pt]
	1 & {2} & \ldots & {N-1} & N.
	\arrow[from=1-1, to=1-2]
	\arrow[from=1-2, to=1-3]
	\arrow[from=1-3, to=1-4]
	\arrow[from=1-4, to=1-5]
	\arrow[bend left=25, from=1-5, to=1-1]
\end{tikzcd}\]

In order to deal with the cyclic case, it will be convenient for us to use the entire $\mathbb{Z}$ as vertex set of $Q.$ For this, we set 
$S_x:=S_{\overline{x}}$, $P(x):=P(\overline{x})$, $I(x):=I(\overline{x})$ for every $x\in \mathbb{Z}$ where $\overline{x}\in Q_0=\{1,\ldots,N\}$ such that 
$\overline{x}\equiv x \;(\bmod\; N).$ We will say that a vertex $x\in \mathbb{Z}$ is the socle or top of a module $M$ if $\soc(M)\cong S_x$ or $\top(M)\cong S_x,$ respectively. In particular, if we write $\soc(M)<\soc(M'),$ this stands for $x<y$ where $\soc(M)\cong S_x$ and $\soc(M')\cong S_y.$ 

Nakayama algebras are exactly those algebras over which every indecomposable module is uniserial. In particular, every uniserial module over the Nakayama algebra $A=kQ/I$ corresponds to a path in $Q$ which is not contained in $I.$ We will denote a path starting at vertex $x$ and ending at vertex $y$ by $\textcolor{blue}{p^x_y}.$ Here we work in $\mathbb{Z}$ and not modulo $N,$ the uniserial module corresponding to this path will have $k$-dimension $y-x+1.$ So, every indecomposable module is isomorphic to a module of the form $M(p^x_y).$ Explicitly, $M(p^x_y)\cong e_xA/e_xJ^{y-x+1}$ where $J$ stands for the Jacobson radical of $A.$  Alternatively, we also denote this module as the interval $\textcolor{blue}{[x,y]}\subset \mathbb{Z}$. This is particularly visual, as $\top[x,y]=x$ and $\soc[x,y]=y.$ Naturally, we have $[x,y]\cong [x+cN,y+cN]$ for any $c\in \mathbb{Z}.$

It is not too hard to see that a Nakayama algebra with $N$ simple modules is uniquely determined by its \textcolor{blue}{Kupisch series}, which is defined to be $[c_1,\ldots,c_N]$ where $\textcolor{blue}{c_j}=\dim_k(P(j)).$ Dually, it is also determined by its Co-Kupisch series $[d_1,\ldots,d_N],$ where $\textcolor{blue}{d_j}=\dim_k(I(j)).$ 

Next, we define the following maps:

$$\textcolor{blue}{f}: \mathbb{Z}\rightarrow \mathbb{Z}, \ j \mapsto j+c_j$$ and 
$$\textcolor{blue}{g}: \mathbb{Z}\rightarrow \mathbb{Z}, \ j \mapsto j-d_j.$$ Here $d_x:=d_{\overline{x}}$ and $c_x:=c_{\overline{x}}$. These maps $f$ and $g$ were already introduced in \cite{G85}, and their properties were studied, e.g., in \cite{Sh17} and \cite{R21}. It is important to note that $f$ and $g$ are both monotone increasing, and they are especially useful for describing homological resolutions of indecomposable modules. By definition of $c_j$, $\soc P(j)=S_{j+c_j-1}$ and $\top I(j)=S_{j-d_j+1}.$ In other words, $P(j)=[j,j+c_j-1]=[j,f(j)-1]$ and $I(j)=[j-d_j+1,j]=[g(j)+1,j].$  
Then a minimal injective resolution of $P(i)$ is given by:
\begin{equation}
\label{eq::injResolUsingg}
    0\rightarrow {P(i)} \rightarrow \textcolor{purple}{I(a)} \rightarrow \textcolor{orange}{I(i-1)} \rightarrow \textcolor{purple}{I(g(a))} \rightarrow \textcolor{orange}{I(g(i-1))} \rightarrow \textcolor{purple}{I(g^2(a))} \rightarrow \textcolor{orange}{I(g^2(i-1))}\rightarrow \ldots
\end{equation}
where $a=i+c_i-1.$ Thus, the resolution can be expressed as two sequences of neighbouring intervals:  $$P(i)=[i,a],$$ \ $$\textcolor{purple}{I^{2j}}=\textcolor{purple}{I(g^j(a))}=\textcolor{purple}{[g^{j+1}(a)+1, g^j(a)]},$$ and $$\textcolor{orange}{I^{2j+1}}=\textcolor{orange}{I(g^j(i-1))}= \textcolor{orange}{[g^{j+1}(i-1)+1, g^j(i-1)]}$$ for $j\ge 0$ if $I^{ 2j},$ resp. $I^{2j+1}$ is a non-zero term. We have $$\idim P(i)=\inf \{d \ | \ \top I^{d-1}=\top I^d \}=\inf \{d \ | \ \top I^{d-1}\le \top I^d \}.$$
If the resolution is finite, it either ends with
$$\ldots \rightarrow \textcolor{orange}{I(g^{j-1}(i-1))} \rightarrow \textcolor{purple}{I(g^{j}(a))} \rightarrow 0$$ or with 
$$\ldots \rightarrow \textcolor{purple}{I(g^j(a))} \rightarrow \textcolor{orange}{I(g^{j}(i-1))} \rightarrow 0.$$ Therefore,
\begin{equation}
\label{eq::Nakayama_idimP(i)}
\idim P(i)=
\inf (\{2j\ |\ g^{j}(i-1)\le g^{j+1}(a)\}\cup \{2j+1\ |\ g^{j+1}(a)\le g^{j+1}(i-1)\}).
\end{equation}

Analogously, using the map $f$ we can describe minimal projective resolutions of indecomposable injective modules. 

\subsection{2n-Gorenstein implies (2n+1)-Gorenstein}

Iwanaga and Fuller showed in \cite[Proposition 2.10]{FI93} that if a Nakayama algebra is $2$-Gorenstein it is necessarily also $3$-Gorenstein. In this section, we show that this statement holds in a greater generality.

\begin{theorem}
\label{2kImplies2k+1}
    Let $A$ be a Nakayama algebra. If $A$ is $2n$-Gorenstein for some $n\in \mathbb{N}^+$, then $A$ is necessarily $(2n+1)$-Gorenstein as well.
\end{theorem}

\begin{remark}
    Interestingly, parity plays an important role in the previous statement. For any $n\ge 1$ there exists a Nakayama algebra which is $(2n+1)$-Gorenstein but not $(2n+2)$-Gorenstein. An example would be the cyclic Nakayama algebra $A$ with $N=3(n+1)$ simple modules and Kupisch series $[2,3,3,\ldots,3]$. This generalises the example of Iwanaga and Fuller from \cite[Chapter 2]{FI93} of a $3$-Gorenstein Nakayama algebra which is not $4$-Gorenstein. 
    Indeed, in this algebra $A$ the indecomposable injective and projective modules are as follows:
    
    $$P(1)=\begin{smallmatrix}
  1\\
  2
\end{smallmatrix},\ I(3)= \begin{smallmatrix}
  2\\
  3
\end{smallmatrix} \text{ and } P(i)=\begin{smallmatrix}
  i\\
  i+1\\
  i+2
\end{smallmatrix}=I(i+2) \text{ for all }i\neq 1.$$

Calculation gives that $\idim P(1)=\infty=\pdim I(3).$ Moreover, $I(3)$ turns up in a minimal injective resolution of $P(1)$ in degree $2n+1$ for the first time. As a result, $A$ is indeed $(2n+1)$-Gorenstein but not $(2n+2)$-Gorenstein. 
    
\end{remark}

The following simple lemma plays a crucial role in the proof of \Cref{2kImplies2k+1}.

\begin{lemma}
    \label{lem::inequalitiesSocTop}
    Let $A=kQ/I$ be a Nakayama algebra, and use the notations as discussed in \Cref{subsec::notationNakayama}. Let $M$, $J$ and $P$ be indecomposable $A$-modules, such that $P$ is projective and $J$ is injective. Then the following hold:

    \begin{enumerate}[\rm (a)]
        \item If $\top (M) \le  \top(P)$ then $\soc (M)\le  \soc(P)$.
        \item If $\soc (P) <  \soc(M)$ then $\top (P) < \top(M)$.
        \item If $\soc (J) \le  \soc(M)$ then $\top (J) \le \soc(M)$. 
        \item If $\top (M) <  \top(J)$ then $\soc (M) < \soc(J)$.
        
    \end{enumerate}
    
\end{lemma}

\begin{proof}

    The proof relies on the fact that the indecomposable projective $P(x)$ corresponds to the unique right-maximal path starting at $x$, and the indecomposable injective $I(x)$ corresponds to the unique left-maximal path ending at $x$.
    
    Let $P=P(x)=[x,f(x)-1]$ and $M=[y,z].$ By definition of $f,$ $[x,f(x)]\in I$ is a relation, so $[y,z]$ cannot contain $[x,f(x)]$. Therefore, if $y\le x$ then $z\le f(x)-1.$ Moreover, if $f(x)-1<z$ then $x<y.$ These are exactly statements (a) and (b).
    
    Dually, let $J=I(x)=[g(x)+1,x]$ and $M=[y,z].$ By definition of $g,$ $[g(x),x]\in I$, so $[y,z]$ cannot contain $[g(x),x]$. Thus, if $x\le z$ then $g(x)+1\le y.$ Moreover, if $y<g(x)+1$ then $z<x.$ These are exactly statements (c) and (d).
    
    The following picture illustrates the intervals appearing in the proof. We need to use the interval $M_i$ for part (i).
    
    {


\centering
\resizebox{0.9\linewidth}{!}{
    \input{pictures/intervals_picture}}
    

}
\end{proof}

\begin{corollary}
\label{cor::Nakayama_f_g_formulas}
Let $A=kQ/I$ be a Nakayama algebra, and use the notations as discussed in Section \ref{subsec::notationNakayama}. Let $j,\ell\in \mathbb{Z}.$ Then the following hold:
\begin{enumerate}[\rm (a)]
    \item If $g(j)+1\le \ell$ then $j+1\le f(\ell).$
    \item If $f(\ell)-1<j$ then $\ell-1 < g(j).$ 
    \item If $j\le f(\ell)-1$ then $g(j) \le \ell-1.$
    \item If $\ell<g(j)+1$ then $f(\ell)<j+1.$
\end{enumerate}

\end{corollary}
\begin{proof}
    Apply \Cref{lem::inequalitiesSocTop} to the projective $P(\ell)=[\ell,f(\ell)-1]$ and the injective $I(j)=[g(j)+1,j].$
    
\end{proof}

\begin{proof}[Proof of \Cref{2kImplies2k+1}]  Assume that $A=kQ/I$ is $2n$-Gorenstein for some $n\ge 1.$ Let $i\in Q_0$ and consider a  minimal injective resolution of $P(i):$ 
\begin{equation}
    0\rightarrow {P(i)} \rightarrow I^0 \rightarrow I^1 \rightarrow I^2 \rightarrow \ldots \rightarrow I^{2n} \rightarrow \ldots
 \end{equation}
We need to show that $\pdim I^{2n}\le 2n.$
We will use the notation from \Cref{subsec::notationNakayama} for the non-zero terms in the above resolution: $$P(i)=[i,a]$$ $$\textcolor{purple}{I^{2j}}=\textcolor{black}{I(g^j(a))}=\textcolor{black}{[g^{j+1}(a)+1, g^j(a)]},$$ and $$\textcolor{orange}{I^{2j+1}}=\textcolor{black}{I(g^j(i-1))}= \textcolor{black}{[g^{j+1}(i-1)+1, g^j(i-1)]}$$ for $j\ge 0.$ 

We may assume that $\idim P(i)\ge 2n,$ otherwise, there is nothing to show. If $\idim P(i)=2n,$ we immediately obtain $\pdim I^{2n}=2n$ from \Cref{thm::preliminaries::ARbij} and the assumption that $A$ is $2n$-Gorenstein. So it remains to consider the case when $\idim P(i)>2n.$

As mentioned in \Cref{subsec::notationNakayama}, a minimal projective resolution of $I^{2n}$ has the following form:

\begin{equation}
    \ldots\rightarrow {P(f^2(c))} \rightarrow P(f(d)) \rightarrow {P(f(c))} \rightarrow {P(d)} \rightarrow {P(c)} \rightarrow  {I^{2n}} \rightarrow 0.
\end{equation}
Here, $c,d\in \mathbb{Z}$ such that $c=\top I^{2n}$ and $d=\soc I^{2n}+1$. We also use the notation $P_t$ for the term in degree $t$. By the dual of \Cref{eq::Nakayama_idimP(i)}, in order to show $\pdim I^{2n}\le 2n,$ it is enough to prove 
$$f^{n+1}(c)\le f^{n}(d).$$ The proof is divided into two steps: First, we prove $$f^{n+1}(c)\le a+1$$ by comparing $c,f(c),\ldots,f^{n+1}(c)$ to the tops of \textit{odd} terms in the minimal injective resolution of $P(i).$ In the second step, we prove $$ a+1\le f^{n}(d)$$ by comparing $d,f(d),\ldots,f^{n}(d)$ to the tops of \textit{even} terms in the minimal injective resolution of $P(i).$ We can do these as the values $c$ and $d$ arise from the resolution of $P(i).$ 

\textbf{Step 1.} Recall that $\top P_0= c=\top I^{2n}$ and we assumed $\idim P(i)> 2n.$ Then $$\top P_0= c < \top \textcolor{orange}{I^{2n-1}}=g^{n}(i-1)+1.$$ We can apply Corollary \ref{cor::Nakayama_f_g_formulas} (d) $n$ times to this inequality to obtain  $f^{n}(c) < i-1+1=i.$ Finally, since $f$ is monotone increasing, we have $f^{n+1}(c)\le f(i)=a+1,$ where the last equality follows from $P(i)=[i,a].$

Note that the first $n$ steps correspond to applying Lemma \ref{lem::inequalitiesSocTop} (d) to the pairs $(P_0,I^{2n-1}),$ $(P_2,I^{2n-3}), \ldots, (P_{2n-2},I^{1}).$

\textbf{Step 2.} We have $\top P_1=d=\top \textcolor{purple}{I^{2n-2}}=g^n(a)+1.$ We apply Corollary \ref{cor::Nakayama_f_g_formulas} (a) $n$ times to this inequality to obtain  $f^{n}(d)\ge a+1.$

Just as in Step 1, every step corresponds to applying Lemma \ref{lem::inequalitiesSocTop} (a) to the pairs $(I^{2n-2}, P_1), (I^{2n-4},P_3), \ldots,(I^0,P_{2n-1}) $. 

\end{proof}

\subsection{The Auslander-Reiten bijection}
As a main result of this section, we prove that \hyperlink{conjectureM}{Marczinzik's conjecture} holds for every Nakayama algebra. The proof relies on the results of the previous section, more precisely, on \Cref{2kImplies2k+1}.

\begin{theorem}
\label{thm::AGiffARbijwelldef_Nak}
    Let $A=kQ/I$ be a Nakayama algebra. Then $A$ is Auslander-Gorenstein if and only if $\psi_A$ is well-defined and bijective.
\end{theorem}

\textbf{Proof strategy:} We will use the following strategy to prove this statement: We assume that $A$ is not Auslander-Gorenstein.
Then either $\pdim D({}_AA)=\infty$ or there exists a $n\ge 1$ such that $A$ is $(2n-1)$-Gorenstein, but not $2n$-Gorenstein.
This follows from \Cref{thm::preliminaries:left-right-symmetry} and Theorem \ref{2kImplies2k+1}.
Note that every Nakayama algebra is $1$-Gorenstein. If $\pdim D({}_AA)=\infty$, then $\psi_A$ is not well-defined for an indecomposable injective.
So, it remains to consider the second case.
Then exists a vertex $j$ such that in the minimal projective resolution of $I(j)$ 

\begin{equation}
\label{eq::projResOfI(j)}
    \ldots \longrightarrow \textcolor{blue}{P_{2n-1}} 
    \longrightarrow P_{2n-2} \longrightarrow \ldots \longrightarrow P_1 \longrightarrow P_0 \longrightarrow I(j) \longrightarrow 0
\end{equation}
the $(2n-1)$-st term, $P_{2n-1}$ has injective dimension greater than $2n-1.$ Dual to  description in \Cref{subsec::notationNakayama}, the non-zero terms in this resolution have the following form:
$$I(j)=[g(j)+1,j]=[c,j]$$ $$\textcolor{black}{P_{2t}}=\textcolor{black}{P(f^t(c))}=\textcolor{black}{[f^t(c), f^{t+1}(c)-1]},$$ and $$\textcolor{black}{P_{2t+1}}=\textcolor{black}{P(f^t(j+1))}= \textcolor{black}{[f^t(j+1), f^{t+1}(j+1)-1]}$$ for $t\ge 0.$

The idea is then to show that this module $P_{2n-1}$ cannot be in the image of $\psi_A,$ implying that this is indeed not a bijection between the sets in question. For the sake of contradiction, we assume that $$\psi_A(J)\cong P_{2n-1}$$ for an indecomposable injective module $J.$ In other words, a minimal projective resolution of $J$ ends with $P_{2n-1}:$

\begin{equation}
\label{eq::projResolOfJ}
    0\longrightarrow \textcolor{blue}{P_{2n-1}}\cong R_d \longrightarrow R_{d-1} \longrightarrow R_{d-2} \longrightarrow \ldots \longrightarrow R_{d-2n} \longrightarrow \ldots \longrightarrow R_0\longrightarrow J \longrightarrow 0 .
\end{equation}

Our task is to show that such a resolution cannot exist. We do this by bounding the tops of the projective terms in this resolution from above using the resolution (\ref{eq::projResOfI(j)}), and bounding their socles from below using a minimal injective resolution of $P_{2n-3}.$ This way we find that the path corresponding to the indecomposable module $R_{d-2n}$ must contain a path which belongs to the ideal $I$. This, however, cannot happen if $R_{d-2n}$ is a non-zero module.

Let us start with getting a better understanding of the terms in (\ref{eq::projResolOfJ}) and introducing some notation. The following observations all follow from \Cref{thm::preliminaries::ARbij} and the assumptions that $A$ is $(2n-1)$-Gorenstein and $\idim P_{2n-1}>2n-1$:

\begin{enumerate}[\rm (A)]
    \item \label{a} $\pdim J=d\ge 2n$. 
    \item \label{b} $\pdim I(j)\ge 2n.$
    \item \label{c} $I(j)$ cannot be the last non-zero term of the minimal injective resolution of an indecomposable projective $P$ with $\idim P<2n.$ 
\end{enumerate}

Using \ref{a}, we obtain that there is a non-zero, projective term $R_{d-2n}$ in the minimal projective resolution of $J$, let us denote it as $$R_{d-2n}=[r,f(r)-1]$$ for some vertex $r.$ Then  $$R_d=[f^{n}(r),f^{n+1}(r)-1].$$ We would like $R_d$ to correspond to the exact same interval in $\mathbb{Z}$ as 
$P_{2n-1},$ not only a shift of it by some multiple of $N.$ So choose $r\in \mathbb{Z}$ such that $[f^{n-1}(j+1),f^{n}(j+1)-1]=[f^{n}(r),f^{n+1}(r)-1]$.

We divide the rest of the proof into three lemmas. The first tells us that $g(j)$ is an upper bound of $\top R_{d-2n}$.

\begin{lemma}
\label{lem::g(j)inR}
We have 
$g(j)\in [r,f(r)-1]=R_{d-2n}.$
\end{lemma}

\begin{proof}
Since $\pdim I(j)\ge 2n$ by our observation \ref{b}, $f^n(j+1)>f^n(c)>f^{n-1}(j+1).$ Here we used the formula (\ref{eq::Nakayama_idimP(i)}) for the projective dimension of an injective module. These vertices correspond to the tops of $P_{2n+1}, P_{2n}$ and $P_{2n-1},$ respectively. We chose $r$ so that  $f^{n-1}(j+1)=f^{n}(r)$ and $f^{n}(j+1)=f^{n+1}(r).$ Hence, 
$$f^{n+1}(r)>f^n(c)>f^{n}(r).$$ Since $f$ is monotone increasing, $f(r)>c>r.$ As $r$ and $c$ are both integers, $c-1\in [r,f(r)-1].$ By definition of $c,$ $c-1=g(j),$ so in conclusion, $g(j)\in [r,f(r)-1].$

Note that this way we actually show that $\top P_{2t}-1 =\soc P_{2t-2}\in R_{d-2n+2t}$ for $t=0,1,\ldots, n$, where $t=0$ is the case we are interested in.
\end{proof}

The next lemma will be used to describe the  socle of certain terms in an injective resolution of $P_{2n-3}$. 

\begin{lemma}
\label{lem::technical}
    For $0\le s\le n-1$ we have $g^{s}(f^s(j+1)-1)=j$.
\end{lemma}
\begin{proof}
    We will proceed by induction on $s.$ The base case $s=0$ holds trivially. For the induction step, assume that the formula holds for some $0\le s<n-1,$ and show that this implies that it also holds for $s+1.$
    
    Observe that $a=f^{s+1}(j+1)-1$ corresponds to the socle of $P_{2s+1},$ and $a,g(a),g^2(a),\ldots$ correspond to the socles of the even terms in the minimal injective resolution of $P_{2s+1}$ which is given as in (\ref{eq::injResolUsingg}): 
    
    $$0 \longrightarrow P_{2s+1} \longrightarrow I(a) \longrightarrow I(b)\longrightarrow I(g(a)) \longrightarrow I(g(b)) \longrightarrow I(g^2(a)) \longrightarrow \ldots $$ 
    The socles of the odd terms are $b=f^s(j+1)-1, g(b), g^2(b), \ldots$. We also use the notation $I^t$ to denote the term in degree $t$ in this resolution. Note that even if $\idim P_{2s+1}<t,$ $I^t$ is set to be the non-zero module $[g(a)^{ \frac{t}{2}+1}+1,g^{\frac{t}{2}}(a)]$ if $t$ is even and  $I^t=[g(b)^{\lfloor \frac{t}{2}\rfloor+1}+1,g^{\lfloor \frac{t}{2}\rfloor}(b)]$ if $t$ is odd. 
    
    By our induction assumption, $I^{2s+1}$ has socle $g^{s}(b)=j.$ So in the end, we would like to show that $\soc I^{2s+1}=\soc I^{2s+2}=g^{s+1}(a).$

    Since $s<n-1$ we have $2s+1<2n-1.$ As our algebra was assumed to be $(2n-1)$-Gorenstein, $\idim P_{2s+1}\le 2s+1.$ 
    Thus, if $\idim P_{2s+1}$ is even, $g^{s}(b)=g^{s+1}(a)$ which is the same as $\soc I^{2s+1}=\soc I^{2s+2}$. So, in this case we are done by our induction hypothesis.
    
    So assume that $\idim P_{2s+1}$ is odd. Let us first prove that $\idim P_{2s+1}<2s+1$ is not possible. Observe that $\soc P_{2s-1}<\soc P_{2s+1}=\soc I^0$. In symbols, $f^{s}(j+1)-1 < a.$ Note that this also makes sense for $s=0,$ as in that case $j=\soc I(j)<\soc P_{1}=a$. Applying Corollary \ref{cor::Nakayama_f_g_formulas} (b)  $s$ times we obtain $j<g^{s}(a).$ In other words, $\soc I^{2s+1}=g^s(b)=j<\soc I^{2s}=g^s(a)$. If $\idim P_{2s+1}=2d+1$ then $g^{t}(a)=g^{t}(b)$ for every $t\ge d+1$. Thus, $\idim P_{2s+1}\ge 2s+1$ indeed. 
    
    So the only possible scenario is $\idim P_{2s+1}=2s+1.$ This meant that $I^{2s+1}=I(g^s(b))=I(j)$ is the last non-zero term in a minimal injective resolution of $P_{2s+1}.$ Here we used the induction assumption for $g^s(b)=j.$ This, however, is impossible as observed in \ref{c}. In conclusion, $\idim P_{2s+1}$ needs to be even, and the statement holds for $s+1.$

\end{proof}

Using the previous lemma, we finally obtain that $j$ is a lower bound of $\soc R_{d-2n}.$

\begin{lemma}
    \label{lemma::jinR}
    We have $j\in [r,f(r)-1]=R_{d-2n}.$
\end{lemma}
\begin{proof}

First, observe that $f^{n}(r)-1=\soc R_{d-2}=\top P_{2n-1}-1=\soc P_{2n-3}=f^{n-1}(j+1)-1.$
Note that this also makes sense for $n=1,$ because in that case $\top P_{2n-1}-1= \soc I(j)=j.$
Applying \Cref{cor::Nakayama_f_g_formulas} (c) $n-1$ times yields $\soc R_{d-2n}=f(r)-1\ge g^{n-1}(f^{n-1}(j+1)-1)=j$. The last equality follows from \Cref{lem::technical}. 

Note that these $n-1$ steps correspond to applying \Cref{lem::inequalitiesSocTop} (c) to the pairs $(I^0,R_{d-2}),$ $(I^2, R_{d-4}),\ldots (I^{2n-4},R_{d-2n+2})$ where $I^0,I^1,\ldots $ denote the terms of the injective resolution of $P_{2n-3}$ given as in (\ref{eq::injResolUsingg}).

Second, $g(j)<j$ by definition of $g$, and $g(j)\ge r$ by \Cref{lem::g(j)inR}. Combining these, we obtain $j\ge r$. In conclusion, $j\in [r,f(r)-1].$
\end{proof}

\begin{proof}[Proof of \Cref{thm::AGiffARbijwelldef_Monomial}]
    Lemma \ref{lem::g(j)inR} and \ref{lemma::jinR} tell us that $g(j), j\in [r,f(r)-1]=R_{d-2n}$. Thus, 
    the interval $[g(j),j]$ is contained in the interval $[r,f(r)-1]$. However, by definition of $g,$ $[g(j),j]\in I$, so $[r,f(r)-1]\in I$ as well. This, however, contradicts the definition of $f,$ and the fact that $R_{d-2n}$ is a non-zero module. 
    
    Hence, under our assumptions, no $J$ can exist for which $\psi_A(J)=P_{2n-1}$. In conclusion, if $A$ is not Auslander-Gorenstein, $\psi_A$ is not a well-defined bijection.
    
\end{proof}

\newpage
\section{Main results for monomial algebras}
\label{chap::MainResultsForMonomial}

In this final chapter, we demonstrate the significance of \Cref{thm::cutting}: Using the results on Nakayama algebras established in Chapter \ref{chap::Nakayama}, we can derive statements about the $n$-Gorenstein properties of monomial algebras. Most notably, we prove the main result of this article: Marczinzik's conjecture holds for the entire class of monomial algebras (\Cref{thm::AGiffARbijwelldef_Monomial}). This leads to a new homological characterisation of the Auslander-Gorenstein property within this class and underlines the central role of the Auslander-Reiten map. In addition to this, we also obtain a proof for a stronger version of the Auslander-Reiten Conjecture in the setting of monomial algebras (see \Cref{cor::strongerVersionofARConj}).

\begin{corollary}
\label{cor::2kimplies2k+1_monomial}
    If a monomial algebra is $2n$-Gorenstein then it is also $(2n+1)$-Gorenstein. This holds for every $n\ge 1.$
\end{corollary}
\begin{proof}
    This property gets inherited from Nakayama algebras to monomial algebras. As explained in Chapter \ref{chap::reduction}, every 2-Gorenstein monomial algebra can be reduced to a Nakayama algebra via the cutting procedure introduced in that chapter. With the help of \Cref{thm::cutting} (1) and \Cref{2kImplies2k+1}, we obtain the desired statement.
    
\end{proof}

\begin{corollary}
\label{cor::Q2for2Gor_monomial}
    For a 2-Gorenstein monomial algebra $A$ the following are equivalent:
    \begin{enumerate}[\rm (i)]
        \item $A$ is Auslander-Gorenstein.
        \item $\psi_A$ is a well-defined bijection for $A.$ 
    \end{enumerate}
\end{corollary}
\begin{proof}
As the main result of Chapter \ref{chap::reduction}, we described how to obtain from every 2-Gorenstein monomial algebra a Nakayama algebra, which shares several important homological properties with the original algebra. This corollary is then a consequence of \Cref{thm::cutting} (3) and (4), as well as \Cref{thm::AGiffARbijwelldef_Nak}. 

\end{proof}

\subsection{Preparatory lemmas}

We will show that we can omit the 2-Gorenstein assumption from Corollary \ref{cor::Q2for2Gor_monomial}. For this, we need the following technical lemmas.

\begin{lemma}\label{lem::ARwelldefbij_Implies_1Gor}
    Let $A=kQ/I$ be a finite-dimensional quiver algebra for which $\psi_A$ is a well-defined bijection. Then $A$ is 1-Gorenstein. 
\end{lemma}
\begin{proof}
If $A$ is not $1$-Gorenstein, then neither is $A^{op}$. So in this case there exists a vertex $x$ such that the projective cover of $I(x)\cong D(Ae_x)$ is not injective. More precisely, there exists a left-maximal path $p$ in $A$ ending at $x$ such that $P(s(p))$ is not projective-injective. We will show that $P(s(p))$ cannot lie in the image of $\psi_A.$ Assume for the sake of contradiction that there is an indecomposable injective module $J$ such that 
    $$0\rightarrow P(s(p))=P_d \xrightarrow{p_{d}} P_{d-1} \rightarrow \ldots \rightarrow P_0 \rightarrow J \rightarrow 0$$ is a minimal projective resolution. As $P(s(p))$ is not injective, $d\ge 1.$ Recall that indecomposable projective $A$-modules are all isomorphic to a module $e_yA$ for some vertex $y$.
     We have that $e_{s(p)},p\in e_{s(p)}A = P(s(p))$ are non-zero elements of $P_d$ and $e_{s(p)}\cdot p=p.$ Since $p_d$ is a monomorphism, $0 \neq p_d(p)\in P_{d-1}$ and $p_d(e_{s(p)})\cdot p = p_d(p).$ In a minimal projective resolution $p_d(P_d)\subseteq \rad(P_{d-1})=P_{d-1}\cdot \rad(A),$ where $\rad(A)$ is the Jacobson radical of $A.$ Since $\rad(A)$ is spanned by the arrows in $Q,$ there exist non-zero elements $h_1,h_2,\ldots,h_m\in P_{d-1}$ and arrows $a_1,a_2,\ldots,a_m\in Q_1$ such that $$\Sigma_{i=1}^mh_i\cdot a_i=p_d(e_{s(p)}).$$
     As $p_d(e_{s(p)})\cdot p=p_d(p)\neq 0,$ there exists at least one $i\in\{1,2,\ldots, m\}$ such that $(h_i\cdot a_i)\cdot p=h_i\cdot a_ip\neq 0.$ Hence $a_ip\neq 0$. This, however, contradicts the left-maximality of the path $p.$ In conclusion, there is no such $J$, and thus, $\psi_A$ cannot be surjective if $A$ is not $1$-Gorenstein.

\end{proof}

\begin{remark}
Note that in the previous lemma we did not assume that $A$ is monomial. The arguments in the proof can be adjusted to also work for non-basic finite-dimensional $k$-algebras. 
\end{remark}

\begin{lemma}\label{lem:dimtopI<=2}
    Let $A=kQ/I$ be a monomial algebra and assume that $\psi_A$ is a well-defined bijection for $A$. Then $\dim (\top J)\le 2$ for every indecomposable injective $A$-module $J$, and $\dim (\soc P)\le 2$ for every indecomposable projective $A$-module $P.$
\end{lemma}
\begin{proof}
    For the sake of contradiction, assume this is not true and there is a vertex $x$ which is the starting point of three different right-maximal paths, $q_1,q_2,$ and $q_3.$ We may always choose these paths (and the corresponding vertex $x$) such that there is no arrow $a$ with $aq_i\notin I$ for all three $i\in\{1,2,3\}.$ By \Cref{lem::ARwelldefbij_Implies_1Gor}, A is 1-Gorenstein, therefore, we can deduce from the dual of \Cref{pdim1Lemma} (i) that $P(x)$ has injective dimension at least $2.$ As we assumed $\psi_A$ to be bijective, there exists an indecomposable injective $A$ module $J$ such that $P(x)$ is the last term in a minimal projective resolution of $J.$ $A$ is 1-Gorenstein, so \Cref{thm::preliminaries::ARbij} implies that $\idim (P(x))\le 1$ if and only if $\pdim J\le 1.$ Therefore, we obtain $d=\pdim J\ge 2,$ and $P(x)\cong \Omega^d J.$ By Lemma \ref{lem::syzygiesInMonomialAlg}, every such $P(x)$ is of the form $P(x)\cong pA$ for some path $p$ of non-zero length. In particular, $P(x)$ is a proper submodule of the indecomposable projective $e_{s(p)}A$ which has a simple top. This, however, cannot happen as we chose the $q_i$ such that there is no arrow $a$ for which $aq_i\notin I$  for all three $i\in\{1,2,3\}.$ This proves $\dim(\soc P(x))\le 2$ for every vertex $x.$ As in a 1-Gorenstein monomial algebra $\dim(\soc P(x))=\dim (\top I(x))$ for every vertex $x$ by Lemma \ref{lem::1GorMonomial: dimsocP(x)=dimtopI(x)}, the dual statement for the injective modules follows.
    
\end{proof}

The next lemma follows immediately from the proof of Lemma \ref{lem:dimtopI<=2}. 

\begin{lemma}\label{lem::technical_pdim1}
    Let $A=kQ/I$ be a monomial algebra such that $\psi_A$ is a well-defined bijection.
    Moreover, let $x$ be a vertex such that two different right-maximal paths start at $x,$ $p_1$ and $p_2$, and that there is no arrow $\alpha$ such that $ \alpha p_1 \notin I$ and $\alpha p_2\notin I$ hold at the same time. Then $\idim P(x)=1.$

\end{lemma}

\begin{lemma}\label{lem::technical_SecondSyzNotIndec}
Let $A=kQ/I$ be a monomial algebra such that $\psi_A$ is a well-defined bijection.
    Let $I(x)$ be an indecomposable injective $A$-module with $\dim(\top I(x))= 2$ and so that $\dim(\top \Omega(I(x)))\ge 2$. Then the top of $\Omega^{2}(I(x))$ is not simple.  
\end{lemma}
\begin{proof}
    We explicitly calculate a minimal projective presentation of $I(x)$ and reach the desired conclusion. We will use the notation introduced in \Cref{subsec::notationMonomial}. Let $p_1$ and $p_2$ be the two left-maximal paths ending at $x$. Note that if there was no arrow $a$ such that $ap_1\notin I$ and  $ap_2\notin I$ then $\top \Omega(I(x))$ would be 1-dimensional. Here we used that $A$ is 1-Gorenstein thanks to \Cref{lem::ARwelldefbij_Implies_1Gor}. Thus, there exists an $a$ with this property. 
    Then we are exactly in the case discussed in the proof of Lemma \ref{pdim1Lemma} (ii). 
    We already described $d_0$ and $\ker(d_0)$ there, so keeping that notation (see Figure \ref{fig:d_0}) we can describe the projective presentation explicitly:
    $$P(s(p))\oplus P(t(a))\xrightarrow{d_1}P(s_1)\oplus P(s_2)\xrightarrow{d_0}I(x)\rightarrow 0$$
    with $$d_0: (q_1,q_2)\mapsto p_1'\cdot q_1+p_2'\cdot q_2\text{ and } d_1: (q_1,q_2)\mapsto (r_{p_1,p_2}q_1-p_1a q_2,-r_{p_2,p_1}q_1).$$

    It remains to describe $\ker(d_1).$ For this, we also illustrate $d_1$ in \Cref{fig:projPresOfI(x)}. We have that $pz_1$ and $pz_2$ (resp. $z_1^-$ and $z_2^-$) are two different paths starting at $s(p)$ (resp. at $t(a)$). By \Cref{lem:dimtopI<=2}, 
    $\dim (\soc P(s(p)))=\dim (\soc P(t(a)))=2.$ Let $z$ be the path such that $z_i=zb_i\tilde{z_i}$ for $i\in\{1,2\}$ where $b_1\neq b_2$ are arrows and $\tilde{z_i}$ some paths. In particular, $p_1zb_2,p_2zb_1\in I.$ Then 
    $$\ker(d_1) = \langle (pzb_1, -z^-b_1),(0,z^-b_2) \rangle.$$
    The above elements are linearly independent and calculation yields that they are indeed in the kernel: $$d_1(pzb_1, -z^-b_1)=(r_{p_1,p_2}pzb_1-p_1a z^-b_1,-r_{p_2,p_1}pzb_1)=(0,0)$$ and $$d_1(0,z^-b_2)=(-r_{p_1,p_2}a z^-b_2,0)=(-p_1zb_2,0)=(0,0).$$ 
    
    Moreover, if $d_1(q_1,q_2)=0$ for some $(q_1,q_2)$ then by the explicit description of $d_1$, $r_{p_2,p_1}q_1=0$, which means that the path $q_1$ must contain  $pzb_1$ as a left subpath or $q_1=0$. This is a consequence of $\dim (\soc P(s(p)))=2$ and
    $pz_1$ and $pz_2$ being two different paths starting at $s(p).$ If $q_1=0,$ then $p_1aq_2=0$ and by similar reasons, $q$ must contain $z^-b_2$ as a left-subpath. With this, we can conclude that these two elements indeed generate the kernel. Hence, $S_{t(b_1)}\oplus S_{t(b_2)}\cong \top \ker(d_1),$ which is as a consequence, not simple. 
    
    \vspace{10 pt}
    {\centering
\begin{figure}[h]
    \centering

    \resizebox{0.9\linewidth}{!}{
    \input{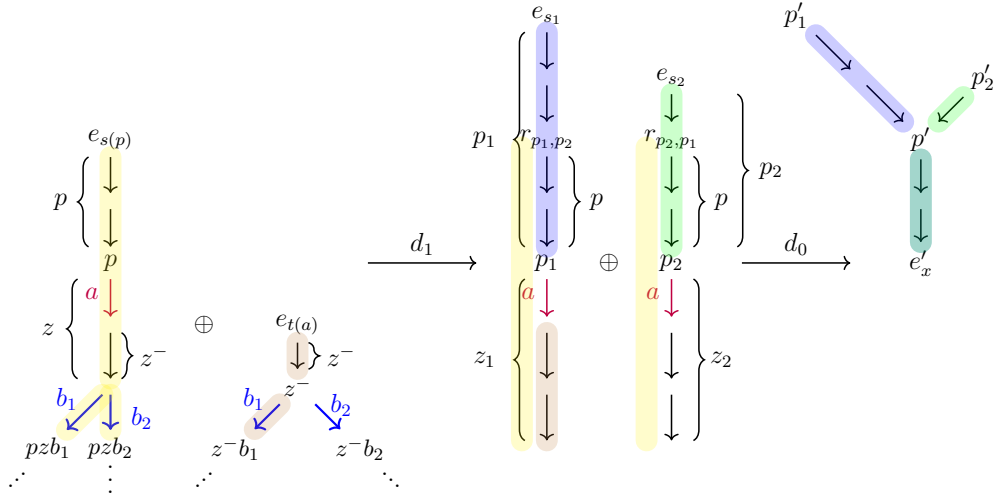}
    }
    
    \caption{Minimal projective presentation of $I(x)$}
    \label{fig:projPresOfI(x)}
\end{figure}

}

\end{proof}

\begin{lemma}
\label{lem::cokerOfMonoInProjRes}
    Let $A=kQ/I$ be a 1-Gorenstein monomial algebra, $x$ and $y$ two vertices of $Q$ and 
    $$\iota: P(x) \hookrightarrow P(y)$$ a monomorphism such that $\iota(P(x))\subseteq \rad P(y)$. Then there is a unique arrow $\alpha$ such that $\alpha A\cong P(x).$ Moreover, if there is no other arrow in $Q$ starting at $s(\alpha),$ then $S_{s(\alpha)}$ is a direct summand of $\soc \coker(\iota)$.
\end{lemma}

\begin{proof}
As $\iota: e_xA\rightarrow e_yA$ is a monomorphism, $0\neq \iota(e_xz)=\iota(e_x)\cdot z\in e_yA $ for every non-zero path $z$ starting at $x.$ Thus, $\iota(e_x)=\Sigma_p c_p p$ where the sum runs over all paths $p$ with $s(p)=y, t(p)=x$ and $pz\notin I$ for every $z\in C_{\max},$ where $C_{\max}$ denotes the set of right-maximal paths starting at $x.$
The coefficients $c_p$ are in $k.$ Since $\iota(P(x))\subseteq \rad P(y)$, every such $p$ has a positive length. As $A$ is 1-Gorenstein, $I(t(z))$ is projective-injective for every $z\in C_{\max}$. This implies that for every such $z$ there is at most one arrow $\alpha$ with $\alpha z\notin I.$ As $\iota\neq 0,$ there exists such an $\alpha$ and is the same arrow for all the different right-maximal paths $z$. This means $\alpha A\cong P(x)$. Thus, we have $p=p^+\alpha$ for every $p$ appearing in the sum above with a non-zero coefficient. 

So $\iota$ corresponds to left-multiplication by $\iota(e_x)=\Sigma_p c_p p,$ and $\Sigma_p c_p p^+$ is a non-zero element in $\coker(\iota).$ Since $(\Sigma_p c_p p^+)\cdot \alpha=\iota(e_x) \in \im(\iota)$ and $(\Sigma_p c_p p^+)\cdot \beta =0$ for every arrow $\beta\neq \alpha$ which follows from the assumption that $\alpha$ is the only arrow starting at $s(\alpha)=t(p^+)$, $\Sigma_p c_p p^+\in \soc \coker(\iota)$. Thus, $S_{s(\alpha)}$ is indeed a direct summand of $\soc \coker(\iota).$

\end{proof}

\subsection{The main result}

\begin{theorem}\label{thm::AGiffARbijwelldef_Monomial}
    For a monomial algebra $A$ the following statements are equivalent:
    \begin{enumerate}[\rm (i)]
        \item $A$ is Auslander-Gorenstein.
        \item $\psi_A$ is a well-defined bijection for $A.$ 
    \end{enumerate}
\end{theorem}

\begin{proof}
By Corollary \ref{cor::Q2for2Gor_monomial}, it only remains to show that if $\psi_A$ is a well-defined bijection for $A$ then $A$ is $2$-Gorenstein. Equivalently, we prove that $A^{op}$ is 2-Gorenstein. So let us assume that (ii) holds. By \Cref{lem::ARwelldefbij_Implies_1Gor}, $A$, and thus $A^{op},$ are 1-Gorenstein. So we need to show that if
$$P_1\rightarrow P_0 \rightarrow I(x) \rightarrow 0$$ is a minimal projective presentation of an indecomposable injective module $I(x)$, then $\idim P_1\le 1.$ 
\Cref{lem:dimtopI<=2} tells us that one of the following two cases must occur. 

\textbf{Case 1:} $I(x)$ is uniserial. 

Let $i_x$ denote the unique left-maximal path ending at $x,$ and set $s:=s(i_x).$ As $A^{op}$ is 1-Gorenstein, $P_0\cong P(s) \cong M(w)$ where $w$ is the unique right-maximal path starting at $s$. The path $w$ is also left-maximal. If $w=i_x$ then $P_1\cong 0$ and hence, $\idim (P_1)\le 1$ holds. So assume that $w=i_x\alpha z$ for some arrow $\alpha$ and a path $z.$ Denote $t_{\alpha}=t(\alpha)$. Then 
\begin{equation}\label{eq::P1Case1}
    P_1\cong P(t_{\alpha}).
\end{equation}

 \vspace{5 pt}
    {\centering

    \resizebox{0.4\linewidth}{!}{
    \input{pictures/proofOfMainThmCase1}
}

}

\textbf{Case 2:} there are exactly two different left-maximal paths ending at $x$, $p_1$ and $p_2.$

We can write $p_j=r_jp$ for some paths $p, r_1,$ and $r_2$, such that $r_1$ and $r_2$ have non-zero length and end with different arrows. As $A^{op}$ is 1-Gorenstein, $P_0\cong P(s_1)\oplus P(s_2)$ where $s_j=s(p_j)$ and $P(s_j)\cong M(w_j)$  for the unique  right-( and left-)maximal path $w_j=p_jz_j$ starting at $s_j.$ If $z_1$ and $z_2$ start with different arrows, then $$P_1\cong P(s(p)).$$ Otherwise, there exists an arrow $\alpha$, such that both $z_1$ and $z_2$ start with $\alpha.$ Let $t_{\alpha}=t(\alpha)$. Then 
\begin{equation}\label{eq::P1Case2}
    P_1\cong P(s(p))\oplus P(t_{\alpha}).
\end{equation}
This is exactly the situation illustrated in \Cref{fig:projPresOfI(x)}. Note that in Case 2 $s(p)$ satisfies the assumptions from \Cref{lem::technical_pdim1}, thus, $\idim P(s(p))= 1.$ So we should focus on  $P(t_{\alpha})$ from \Cref{eq::P1Case1} and \Cref{eq::P1Case2}.
Let $$0\rightarrow R_{d} \xrightarrow{p_d} R_{d-1} \rightarrow \ldots \rightarrow R_0 \rightarrow I(v) \rightarrow 0$$ be a minimal projective resolution of an indecomposable injective module $I(v)$.
\\

\textbf{Claim:} if $d\ge 2$ and every direct summand of $\Omega^{d-1}(I(v))$ has a simple top, then  $P(t_{\alpha})\ncong R_d.$
\\
\\
\textit{Proof of the Claim:} 
 By assumption (ii), $R_d$ is indecomposable, so $\Omega^{d-1}(I(v))$ has a unique non-projective direct summand, $C.$ Let every direct summand of $\Omega^{d-1}(I(v))$ have a simple top. So in particular, there exists a vertex $c$ such that $S_c\cong \top C.$ So we have the following short exact sequence:

$$0\longrightarrow R_d \xrightarrow{p_d=(\iota,0)}P(c)\oplus P'\cong R_{d-1}\xrightarrow{(\pi,id)} C\oplus P' \cong \Omega^{d-1}(I(v)) \longrightarrow 0,$$ where $P'$ is projective and $\pi: P(c)\to C$ is the projective cover of $C.$  For the sake of contradiction, assume that $P(t(\alpha))\cong R_d.$ In both Case 1 and Case 2, this leads to the following: 
\begin{equation}
\label{eq::visdirectsummandofsocle}
    S_x \text{ is a direct summand of }\soc C.
\end{equation}

To see this, we would like to use \Cref{lem::cokerOfMonoInProjRes}. Indeed, as we are talking about a minimal projective resolution, $\iota(P(t_{\alpha}))\subseteq \rad P(c)$  holds and the lemma applies. Hence, there is a unique arrow $a$ such that $P(t_{\alpha})\cong aA$, and if there is no other arrow starting at $s(a)$, $S_{s(a)}$ is a direct summand of $\soc C.$ 
In Case 1, $0\neq z\in P(t_{\alpha})$ and $w=i_x\alpha z$ is the unique left-maximal path ending at $t(z).$ In Case 2, $0\neq z_1^-\in P(t_{\alpha})$, and $w_1=p_1z_1=p_1\alpha z_1^-$ is the unique left-maximal path ending at $t(z_1^-).$ So in both cases, $a=\alpha$ and $s(a)=x.$ Since $I(x)$ was uniserial in Case 1, the same is true for $P(x)$ according to \Cref{lem::1GorMonomial: dimsocP(x)=dimtopI(x)}. Thus, $\alpha$ is the only arrow starting at $x.$ For Case 2 recall that $\dim (\soc P(t_{\alpha}))\le 2$ by \Cref{lem:dimtopI<=2}. So as $z_1$ and $z_2$ are already two different right-maximal paths starting at $s(\alpha)$, $\alpha$ is the only arrow starting at $x.$ Hence, (\ref{eq::visdirectsummandofsocle}) holds in both cases. 

This leads to our contradiction. Since $C$ is a submodule of the projective $R_{d-2}$, every direct summand of $\soc C$ is a direct summand of the socle of an indecomposable projective module as well. So by (\ref{eq::visdirectsummandofsocle}), this is true in particular for $S_x$. By the 1-Gorenstein property of $A$, this implies that $I(x)$ is projective-injective. This, however, contradicts our assumptions both in Case 1 and in Case 2. In conclusion, $P(t_{\alpha})\ncong R_d.$

We finish our proof by investigating which vertices $v$ satisfy the assumption of the Claim. An immediate observation is that this is true if $d\ge 3$, by \Cref{lem::syzygiesInMonomialAlg}. Moreover, this is also true if $d=2$ and $I(v)\cong M(i_v)$ is uniserial. Indeed, we get from our previous discussion in Case 1 that $\Omega(I(v))\cong M(z),$ where $i_v\beta z$ is the unique right-maximal path starting at $s(i_v)$ and $\beta$ is an arrow. In particular, $\Omega(I(v))$ has a simple top. Finally, Lemma \ref{lem::technical_SecondSyzNotIndec} implies that if $I(v)$ is not uniserial and the top of $\Omega(I(v))$ is not simple, then $\top \Omega^2(I(v))$ is not simple either, which means that $R_2$ is not indecomposable. As we assumed (ii), $\psi_A$ is well-defined and $d>2$ in this case. So the assumptions of the Claim also hold in the case $d=2.$ 

As a consequence, $\psi_A(I(v))\neq P_{t_{\alpha}}$ for any $v$ with $\pdim I(v)\ge 2.$ Due to assumption (ii), the only possibility is that there exists a vertex $v$ with $\pdim I(v)=1$ and $\Omega(I(v))\cong P(t_{\alpha}),$ or a vertex $v$ with $I(v)\cong P(t_{\alpha}).$ Since $A$ is 1-Gorenstein, \Cref{thm::preliminaries::ARbij} yields that $\idim P(t_{\alpha}) = \idim \psi_A(I(v)) = \pdim I(v)\le 1.$

In conclusion, $\pdim (I^1)\le 1$ and $A$ is 2-Gorenstein.

\end{proof}

\subsection{A stronger version of the Auslander-Reiten Conjecture}
\label{subsec::strongAuslanderReitenConj}

The Auslander-Reiten Conjecture states that if an Artin algebra $A$ is $n$-Gorenstein for all $n\ge 1$, then $A$ is necessarily Iwanaga-Gorenstein. As a corollary of \Cref{thm::cutting}, we obtain that a strengthening of this statement holds for every monomial algebra. 

\begin{corollary}
\label{cor::strongerVersionofARConj}
    Let $A=kQ/I$ be a monomial algebra with $n$ simple modules, where $n\ge 1$. Assume that $A$ is $(4n-2)$-Gorenstein. Then $A$ is $(4n-2)$-Iwanaga-Gorenstein.  
\end{corollary}

\begin{proof} Let $A=kQ/I$ be a $(4n-2)$-Gorenstein monomial algebra. Since $n\ge1$, we obtain from \Cref{cor::2kimplies2k+1_monomial} that $A$ is also $(4n-1)$-Gorenstein. Then we can apply \Cref{thm::cutting} to $A$ repeatedly to obtain a Nakayama algebra $B$ which is, first of all, $(4n-1)$-Gorenstein, and second, $B$ is $(4n-2)$-Iwanaga-Gorenstein if and only if $A$ is. Note that we can easily find an upper bound for the number of simple modules of $B.$ Since $Q$ has $n$ vertices, we applied the cutting procedure to $A$ at most $n$ times, once for every degree 4 vertex. Every cutting operation creates one extra vertex, so the resulting Nakayama algebra $B$ has at most $2n$ vertices.

In the following, we assume for the sake of contradiction that $B$ is not $(4n-2)$-Iwanaga-Gorenstein, i.e. that $\idim {}_BB=\pdim D({}_BB)\ge 4n-1$ or $\idim B_B\ge 4n-1.$ Let us start with the case $\pdim D({}_BB)\ge 4n-1.$ This assumption implies that there exists a simple $B$-module $S$ whose injective envelope $I(S)$ has projective dimension at least $4n-1.$ As $B$ is $(4n-1)$-Gorenstein, $I(S)$ cannot appear as a direct summand of any of the first $4n-2$ terms $I^i$ in a minimal injective resolution of $B_B$:
$$0\longrightarrow B_B \longrightarrow I^0 \longrightarrow I^1 \longrightarrow \ldots .$$

This means that $\Ext_B^i(S,B)=0$ for $i=0,1,\ldots,4n-2$. 
Therefore, if we apply the functor $(-)^*=Hom_B(-,B)$ to a minimal projective resolution of $S$
$$\ldots \longrightarrow P_1 \longrightarrow P_0 \longrightarrow S \longrightarrow 0,$$
we obtain the following exact sequence of projective $B^{op}$-modules: $$0\longrightarrow P_0^* \longrightarrow P_1^* \longrightarrow \ldots \longrightarrow P_{4n-2}^* \xrightarrow{\ g \ } P_{4n-1}^*.$$

Then the module $M=\coker(g)$ has projective dimension at least $4n-1$ over $B^{op}$. 
However, this is impossible, since $B^{op}$ is a Nakayama algebra and the finitistic dimension of a Nakayama algebra with $N$ simple modules is at most $2N-2$, see for example \cite[Corollary 3.3]{ChY14}. Since $B^{op}$ has at most $2n$ simple modules, $\pdim M\le 4n-2$ should hold, which is a contradiction. In conclusion, $\idim {}_BB\ge 4n-1$ is not possible. As being $(4n-1)$-Gorenstein is left-right symmetric, we can use an analogous argument for the opposite algebra of $B$ to show that $\idim B_B< 4n-1$ must hold as well. Thus, $B$ is $(4n-2)$-Iwanaga-Gorenstein, and so is $A.$

\end{proof}

\section*{Acknowledgement}

The author is very grateful to Ren\'e Marczinzik for the insightful discussions and the numerous interesting questions suggested by him about Auslander-Gorenstein algebras. 
The author also thanks Jacob Fjeld Grevstad for the helpful discussions regarding \Cref{cor::2-Gor-class-Nak}.

The author was supported by the Deutsche Forschungsgemeinschaft (DFG, German Research Foundation) under Germany's Excellence Strategy grant EXC-2047/1-390685813.

\bibliographystyle{alpha}
\bibliography{refs}

\end{document}